\newtheorem{proposition}{Proposition}[section]
\newtheorem{lemma}[proposition]{Lemma}
\newtheorem{corollary}[proposition]{Corollary}
\newtheorem{theorem}[proposition]{Theorem}
\theoremstyle{definition}
\newtheorem{definition}[proposition]{Definition}
\theoremstyle{remark}
\newtheorem{remark}[proposition]{Remark}
\newcommand{\thlabel}[1]{\label{th:#1}}
\newcommand{\thref}[1]{Theorem~\ref{th:#1}}
\newcommand{\selabel}[1]{\label{se:#1}}
\newcommand{\seref}[1]{Section~\ref{se:#1}}
\newcommand{\lelabel}[1]{\label{le:#1}}
\newcommand{\leref}[1]{Lemma~\ref{le:#1}}
\newcommand{\prlabel}[1]{\label{pr:#1}}
\newcommand{\prref}[1]{Proposition~\ref{pr:#1}}
\newcommand{\colabel}[1]{\label{co:#1}}
\newcommand{\relabel}[1]{\label{re:#1}}
\newcommand{\reref}[1]{Remark~\ref{re:#1}}
\newcommand{\delabel}[1]{\label{de:#1}}
\newcommand{\deref}[1]{Definition~\ref{de:#1}}
\newcommand{\eqlabel}[1]{\label{eq:#1}}
\newcommand{\equref}[1]{(\ref{eq:#1})}
\def\equuref#1#2{(\ref{eq:#1}.#2)}
\def\ra{\rightarrow}
\def\Id{{\rm Id}}
\def\circo{~\circ~}
\def\ACA{{{}_A^{~ !}\Cc_A^{\vbox{}}}}
\def\mfC{{\mf C}}
\def\ot{\otimes}
\def\ota{\hspace*{-1pt}\bullet\hspace*{-1pt}}
\def\va{\varepsilon}
\def\un{\underline}
\def\mf{\mathfrak}
\def\va{\varepsilon}
\def\ra{\rightarrow}
\def\d{\delta}
\def\ov{\overline}
\def\cal{\mathcal}
\def\un{\underline}
\def\Colim{{\rm Colim}}
\def\ACA{{}_A^{~ !}\Cc_A^{\vbox{}}}
\newcommand{\Cc}{\cal C}
\newcommand{\Dc}{\cal D}
\newcommand{\Ec}{\cal E}
\newcommand{\Tc}{\cal T}
\newcommand{\Nat}{{\rm Nat}}
\newcommand{\gbrcb}{\mbox{$\gbrc\gvac{-1}\gnot{\hspace*{-4mm}\bullet}$}}
\def\equal#1{\smash{\mathop{=}\limits^{#1}}}
\def\equalupdown#1#2{\smash{\mathop{=}\limits^{#1}\limits_{#2}}}
 \newcommand{\gbeg}[2]{
   \unitlength=1pt
   \grrow = #2
   \grcolumn = 0
   \grcalca = #1
   \grcalcb = #2
   \multiply \grcalca by \factor
   \grwidth = \grcalca
   \multiply \grcalcb by \factor
   \begin{minipage}{\grcalca pt}
   \begin{picture}(\grcalca,\grcalcb)
   \advance \grcalcb by -\factor
   \put(0, \grcalcb){\line(1,0){\grwidth}} }
 \newcommand{\gend}{
   \put(0, \factor){\line(1,0){\grwidth}}
   \end{picture}
   {\vskip2.5ex}
   \end{minipage} }
 \newcommand{\gnl}{
   \advance \grrow by -1
   \grcolumn = 0}
 \newcommand{\gvac}[1]{       
   \advance \grcolumn by #1} 
 \newcommand{\gcl}[1]{
   \grcalca = \grcolumn
   \multiply \grcalca by \factor
   \advance \grcalca by \hfactor
   \grcalcb = \grrow
   \multiply \grcalcb by \factor
   \grcalcc = #1
   \multiply \grcalcc by \factor
   \put(\grcalca,\grcalcb) {\line(0,-1){\grcalcc}} 
   \advance \grcolumn by 1}
 \newcommand{\gcn}[4]{
   \grcalca = \grcolumn
   \multiply \grcalca by \factor
   \grcalci = #3
   \multiply \grcalci by \hfactor
   \advance \grcalca by \grcalci
   \grcalcb = \grcolumn
   \multiply \grcalcb by \factor 
   \grcalci = #3
   \advance \grcalci by #4
   \multiply \grcalci by \qfactor
   \advance \grcalcb by \grcalci
   \grcalcc = \grcolumn
   \multiply \grcalcc by \factor
   \grcalci = #4
   \multiply \grcalci by \hfactor
   \advance \grcalcc by \grcalci
   \grcalcd = \grrow
   \multiply \grcalcd by \factor 
   \grcalce = \grrow
   \multiply \grcalce by \factor 
   \grcalci = #2
   \multiply \grcalci by \tfactor
   \advance \grcalce by -\grcalci
   \grcalcf = \grrow
   \multiply \grcalcf by \factor 
   \grcalci = #2
   \multiply \grcalci by \hfactor
   \advance \grcalcf by -\grcalci
   \grcalcg = \grrow
   \multiply \grcalcg by \factor 
   \grcalci = #2
   \multiply \grcalci by \tfactor
   \multiply \grcalci by 2
   \advance \grcalcg by -\grcalci
   \grcalch = \grrow
   \advance \grcalch by -#2
   \multiply \grcalch by \factor 
   \qbezier(\grcalca,\grcalcd)(\grcalca,\grcalce)(\grcalcb,\grcalcf) 
   \qbezier(\grcalcb,\grcalcf)(\grcalcc,\grcalcg)(\grcalcc,\grcalch) 
   \advance \grcolumn by #1}
 \newcommand{\gnot}[1]{
   \grcalca = \grcolumn
   \multiply \grcalca by \factor
   \advance \grcalca by \hfactor
   \grcalcb = \grrow
   \multiply \grcalcb by \factor
   \advance \grcalcb by -\hfactor
   \put(\grcalca,\grcalcb) {\makebox(0,0){$\scriptstyle #1$}} }
 \newcommand{\got}[2]{
   \grcalca = \grcolumn
   \multiply \grcalca by \factor
   \grcalcc = #1
   \multiply \grcalcc by \hfactor
   \advance \grcalca by \grcalcc
   \grcalcb = \grrow
   \multiply \grcalcb by \factor
   \advance \grcalcb by -\tfactor
   \advance \grcalcb by -\tfactor
   \put(\grcalca,\grcalcb){\makebox(0,0)[b]{$#2$}}
   \advance \grcolumn by #1}
 \newcommand{\gob}[2]{
   \grcalca = \grcolumn
   \multiply \grcalca by \factor
   \grcalcc = #1
   \multiply \grcalcc by \hfactor
   \advance \grcalca by \grcalcc
   \put(\grcalca,0){\makebox(0,0)[b]{$#2$}}
   \advance \grcolumn by #1}
 \newcommand{\gmu}{  
   \grcalca = \grcolumn
   \advance \grcalca by 1
   \multiply \grcalca by \factor
   \grcalcb = \grrow
   \multiply \grcalcb by \factor
   \grcalcc = \factor
   \advance \grcalcc by \hfactor
   \put(\grcalca,\grcalcb){\oval(\factor,\grcalcc)[b]}
   \advance \grcalcb by -\hfactor
   \advance \grcalcb by -\qfactor
   \put(\grcalca,\grcalcb) {\line(0,-1){\qfactor}} 
   \advance \grcolumn by 2}
 \newcommand{\gcmu}{   
   \grcalca = \grcolumn
   \advance \grcalca by 1
   \multiply \grcalca by \factor
   \grcalcb = \grrow
   \advance \grcalcb by -1
   \multiply \grcalcb by \factor
   \grcalcc = \factor
   \advance \grcalcc by \hfactor
   \put(\grcalca,\grcalcb){\oval(\factor,\grcalcc)[t]}
   \advance \grcalcb by \factor
   \put(\grcalca,\grcalcb) {\line(0,-1){\qfactor}} 
   \advance \grcolumn by 2}
 \newcommand{\glm}{
   \grcalca = \grcolumn
   \multiply \grcalca by \factor
   \advance \grcalca by \hfactor
   \grcalcb = \grcalca
   \advance \grcalcb by \factor
   \grcalcc = \grrow
   \multiply \grcalcc by \factor
   \grcalcd = \grcalcc
   \advance \grcalcd by -\tfactor
   \grcalce = \grcalcd
   \advance \grcalce by -\tfactor
   \put(\grcalca, \grcalcc){\line(0,-1){\tfactor}}
   \put(\grcalca, \grcalcd){\line(1,0){\factor}}
   \put(\grcalca, \grcalcd){\line(3,-1){\factor}}
   \put(\grcalcb, \grcalcc){\line(0,-1){\factor}}
   \advance \grcolumn by 2}
 \newcommand{\grm}{
   \grcalcb = \grcolumn
   \multiply \grcalcb by \factor
   \advance \grcalcb by \hfactor
   \grcalca = \grcalcb
   \advance \grcalca by \factor
   \grcalcc = \grrow
   \multiply \grcalcc by \factor
   \grcalcd = \grcalcc
   \advance \grcalcd by -\tfactor
   \grcalce = \grcalcd
   \advance \grcalce by -\tfactor
   \put(\grcalca, \grcalcc){\line(0,-1){\tfactor}}
   \put(\grcalca, \grcalcd){\line(-1,0){\factor}}
   \put(\grcalca, \grcalcd){\line(-3,-1){\factor}}
   \put(\grcalcb, \grcalcc){\line(0,-1){\factor}}
   \advance \grcolumn by 2}
 \newcommand{\glcm}{
   \grcalca = \grcolumn
   \multiply \grcalca by \factor
   \advance \grcalca by \hfactor
   \grcalcb = \grcalca
   \advance \grcalcb by \factor
   \grcalcc = \grrow
   \advance \grcalcc by -1
   \multiply \grcalcc by \factor
   \grcalcd = \grcalcc
   \advance \grcalcd by \tfactor
   \grcalce = \grcalcd
   \advance \grcalce by \tfactor
   \put(\grcalca, \grcalcc){\line(0,1){\tfactor}}
   \put(\grcalca, \grcalcd){\line(1,0){\factor}}
   \put(\grcalca, \grcalcd){\line(3,1){\factor}}
   \put(\grcalcb, \grcalcc){\line(0,1){\factor}}
   \advance \grcolumn by 2}
 \newcommand{\grcm}{
   \grcalcb = \grcolumn
   \multiply \grcalcb by \factor
   \advance \grcalcb by \hfactor
   \grcalca = \grcalcb
   \advance \grcalca by \factor
   \grcalcc = \grrow
   \advance \grcalcc by -1
   \multiply \grcalcc by \factor
   \grcalcd = \grcalcc
   \advance \grcalcd by \tfactor
   \grcalce = \grcalcd
   \advance \grcalce by \tfactor
   \put(\grcalca, \grcalcc){\line(0,1){\tfactor}}
   \put(\grcalca, \grcalcd){\line(-1,0){\factor}}
   \put(\grcalca, \grcalcd){\line(-3,1){\factor}}
   \put(\grcalcb, \grcalcc){\line(0,1){\factor}}
   \advance \grcolumn by 2}
 \newcommand{\gwmu}[1]{    
   \grcalca = \grcolumn
   \multiply \grcalca by \factor
   \grcalcd = \hfactor
   \multiply \grcalcd by #1
   \advance \grcalca by \grcalcd
   \grcalcb = \grrow
   \multiply \grcalcb by \factor
   \grcalcc = \factor
   \advance \grcalcc by \hfactor
   \grcalcd = #1
   \advance \grcalcd by -1
   \multiply \grcalcd by \factor
   \put(\grcalca,\grcalcb){\oval(\grcalcd,\grcalcc)[b]}
   \advance \grcalcb by -\hfactor
   \advance \grcalcb by -\qfactor
   \put(\grcalca,\grcalcb) {\line(0,-1){\qfactor}} 
   \advance \grcolumn by #1}
 \newcommand{\gwcm}[1]{   
   \grcalca = \grcolumn
   \multiply \grcalca by \factor
   \grcalcd = \hfactor
   \multiply \grcalcd by #1
   \advance \grcalca by \grcalcd
   \grcalcb = \grrow
   \advance \grcalcb by -1
   \multiply \grcalcb by \factor
   \grcalcc = \factor
   \advance \grcalcc by \hfactor
   \grcalcd = #1
   \advance \grcalcd by -1
   \multiply \grcalcd by \factor
   \put(\grcalca,\grcalcb){\oval(\grcalcd,\grcalcc)[t]}
   \advance \grcalcb by \factor
   \put(\grcalca,\grcalcb) {\line(0,-1){\qfactor}} 
   \advance \grcolumn by #1}
 \newcommand{\gwmuc}[1]{    
   \grcalca = \grcolumn
   \multiply \grcalca by \factor
   \advance \grcalca by \hfactor
   \grcalcb = \grrow
   \multiply \grcalcb by \factor
   \grcalcc = #1
   \advance \grcalcc by -1
   \multiply \grcalcc by \factor
   \put(\grcalca,\grcalcb){\line(1,0){\grcalcc}}
   \advance \grcalca by -\hfactor
   \grcalcd = \hfactor
   \multiply \grcalcd by #1
   \advance \grcalca by \grcalcd
   \grcalcc = \factor
   \advance \grcalcc by \hfactor
   \grcalcd = #1
   \advance \grcalcd by -1
   \multiply \grcalcd by \factor
   \put(\grcalca,\grcalcb){\oval(\grcalcd,\grcalcc)[b]}
   \advance \grcalcb by -\hfactor
   \advance \grcalcb by -\qfactor
   \put(\grcalca,\grcalcb) {\line(0,-1){\qfactor}} 
   \advance \grcolumn by #1}
 \newcommand{\gwcmc}[1]{   
   \grcalca = \grcolumn
   \multiply \grcalca by \factor
   \advance \grcalca by \hfactor
   \grcalcb = \grrow
   \multiply \grcalcb by \factor
   \advance \grcalcb by -\factor
   \grcalcc = #1
   \advance \grcalcc by -1
   \multiply \grcalcc by \factor
   \put(\grcalca,\grcalcb){\line(1,0){\grcalcc}}
   \grcalcd = #1
   \advance \grcalcd by -1
   \multiply \grcalcd by \hfactor
   \advance \grcalca by \grcalcd
   \grcalcc = \factor
   \advance \grcalcc by \hfactor
   \grcalcd = #1
   \advance \grcalcd by -1
   \multiply \grcalcd by \factor
   \put(\grcalca,\grcalcb){\oval(\grcalcd,\grcalcc)[t]}
   \advance \grcalcb by \factor
   \put(\grcalca,\grcalcb) {\line(0,-1){\qfactor}} 
   \advance \grcolumn by #1}
 \newcommand{\gev}{  
   \grcalca = \grcolumn
   \advance \grcalca by 1
   \multiply \grcalca by \factor
   \grcalcb = \grrow
   \multiply \grcalcb by \factor
   \grcalcc = \factor
   \advance \grcalcc by \hfactor
   \put(\grcalca,\grcalcb){\oval(\factor,\grcalcc)[b]}
   \advance \grcolumn by 2}
 \newcommand{\gdb}{   
   \grcalca = \grcolumn
   \advance \grcalca by 1
   \multiply \grcalca by \factor
   \grcalcb = \grrow
   \advance \grcalcb by -1
   \multiply \grcalcb by \factor
   \grcalcc = \factor
   \advance \grcalcc by \hfactor
   \put(\grcalca,\grcalcb){\oval(\factor,\grcalcc)[t]}
   \advance \grcolumn by 2}
 \newcommand{\gwev}[1]{    
   \grcalca = \grcolumn
   \multiply \grcalca by \factor
   \grcalcd = \hfactor
   \multiply \grcalcd by #1
   \advance \grcalca by \grcalcd
   \grcalcb = \grrow
   \multiply \grcalcb by \factor
   \grcalcc = \factor
   \advance \grcalcc by \hfactor
   \grcalcd = #1
   \advance \grcalcd by -1
   \multiply \grcalcd by \factor
   \put(\grcalca,\grcalcb){\oval(\grcalcd,\grcalcc)[b]}
   \advance \grcolumn by #1}
 \newcommand{\gwdb}[1]{   
   \grcalca = \grcolumn
   \multiply \grcalca by \factor
   \grcalcd = \hfactor
   \multiply \grcalcd by #1
   \advance \grcalca by \grcalcd
   \grcalcb = \grrow
   \advance \grcalcb by -1
   \multiply \grcalcb by \factor
   \grcalcc = \factor
   \advance \grcalcc by \hfactor
   \grcalcd = #1
   \advance \grcalcd by -1
   \multiply \grcalcd by \factor
   \put(\grcalca,\grcalcb){\oval(\grcalcd,\grcalcc)[t]}
   \advance \grcolumn by #1}
 \newcommand{\gbr}{
   \grcalca = \grcolumn
   \multiply \grcalca by \factor
   \advance \grcalca by \hfactor
   \grcalcb = \grcalca
   \advance \grcalcb by \hfactor
   \grcalcc = \grcalca
   \advance \grcalcc by \factor
   \grcalcd = \grrow
   \multiply \grcalcd by \factor
   \grcalce = \grcalcd
   \advance \grcalce by -\tfactor
   \grcalcf = \grcalcd
   \advance \grcalcf by -\hfactor
   \grcalcg = \grcalce
   \advance \grcalcg by -\tfactor
   \grcalch = \grcalcd
   \advance \grcalch by -\factor
   \qbezier(\grcalca,\grcalcd)(\grcalca,\grcalce)(\grcalcb,\grcalcf) 
   \qbezier(\grcalcb,\grcalcf)(\grcalcc,\grcalcg)(\grcalcc,\grcalch) 
   \advance \grcalcf by -\dfactor
   \advance \grcalcb by -\sfactor
   \qbezier(\grcalca,\grcalch)(\grcalca,\grcalcg)(\grcalcb,\grcalcf) 
   \advance \grcalcf by \sfactor
   \advance \grcalcb by \tfactor
   \qbezier(\grcalcc,\grcalcd)(\grcalcc,\grcalce)(\grcalcb,\grcalcf) 
   \advance \grcolumn by 2}
 \newcommand{\gibr}{
   \grcalca = \grcolumn
   \multiply \grcalca by \factor
   \advance \grcalca by \hfactor
   \grcalcb = \grcalca
   \advance \grcalcb by \hfactor
   \grcalcc = \grcalca
   \advance \grcalcc by \factor
   \grcalcd = \grrow
   \multiply \grcalcd by \factor
   \grcalce = \grcalcd
   \advance \grcalce by -\tfactor
   \grcalcf = \grcalcd
   \advance \grcalcf by -\hfactor
   \grcalcg = \grcalce
   \advance \grcalcg by -\tfactor
   \grcalch = \grcalcd
   \advance \grcalch by -\factor
   \qbezier(\grcalcc,\grcalcd)(\grcalcc,\grcalce)(\grcalcb,\grcalcf) 
   \qbezier(\grcalcb,\grcalcf)(\grcalca,\grcalcg)(\grcalca,\grcalch) 
   \advance \grcalcf by -\dfactor
   \advance \grcalcb by \sfactor
   \qbezier(\grcalcc,\grcalch)(\grcalcc,\grcalcg)(\grcalcb,\grcalcf) 
   \advance \grcalcf by \sfactor
   \advance \grcalcb by -\tfactor
   \qbezier(\grcalca,\grcalcd)(\grcalca,\grcalce)(\grcalcb,\grcalcf) 
   \advance \grcolumn by 2}
\newcommand{\gsy}{
   \grcalca = \grcolumn
   \multiply \grcalca by \factor
   \advance \grcalca by \hfactor
   \grcalcb = \grcalca
   \advance \grcalcb by \hfactor
   \grcalcc = \grcalca
   \advance \grcalcc by \factor
   \grcalcd = \grrow
   \multiply \grcalcd by \factor
   \grcalce = \grcalcd
   \advance \grcalce by -\tfactor
   \grcalcf = \grcalcd
   \advance \grcalcf by -\hfactor
   \grcalcg = \grcalce
   \advance \grcalcg by -\tfactor
   \grcalch = \grcalcd
   \advance \grcalch by -\factor
   \qbezier(\grcalcc,\grcalcd)(\grcalcc,\grcalce)(\grcalcb,\grcalcf) 
   \qbezier(\grcalcb,\grcalcf)(\grcalca,\grcalcg)(\grcalca,\grcalch) 
   \advance \grcalcf by -\dfactor
   \advance \grcalcb by \sfactor
   \qbezier(\grcalcc,\grcalch)(\grcalcc,\grcalcg)(\grcalcb,\grcalcf) 
   \qbezier(\grcalca,\grcalcd)(\grcalca,\grcalce)(\grcalcb,\grcalcf) 
   \advance \grcolumn by 2}
 \newcommand{\gbrc}{
   \grcalca = \grcolumn
   \multiply \grcalca by \factor
   \advance \grcalca by \hfactor
   \grcalcb = \grcalca
   \advance \grcalcb by \hfactor
   \grcalcc = \grcalca
   \advance \grcalcc by \factor
   \grcalcd = \grrow
   \multiply \grcalcd by \factor
   \grcalce = \grcalcd
   \advance \grcalce by -\tfactor
   \grcalcf = \grcalcd
   \advance \grcalcf by -\hfactor
   \grcalcg = \grcalce
   \advance \grcalcg by -\tfactor
   \grcalch = \grcalcd
   \advance \grcalch by -\factor
   \put(\grcalcb,\grcalcf){\circle{\hfactor}}
   \qbezier(\grcalca,\grcalcd)(\grcalca,\grcalce)(\grcalcb,\grcalcf) 
   \qbezier(\grcalcb,\grcalcf)(\grcalcc,\grcalcg)(\grcalcc,\grcalch) 
   \advance \grcalcf by -\dfactor
   \advance \grcalcb by -\sfactor
   \qbezier(\grcalca,\grcalch)(\grcalca,\grcalcg)(\grcalcb,\grcalcf) 
   \advance \grcalcf by \sfactor
   \advance \grcalcb by \tfactor
   \qbezier(\grcalcc,\grcalcd)(\grcalcc,\grcalce)(\grcalcb,\grcalcf) 
   \advance \grcolumn by 2}
 \newcommand{\gibrc}{
   \grcalca = \grcolumn
   \multiply \grcalca by \factor
   \advance \grcalca by \hfactor
   \grcalcb = \grcalca
   \advance \grcalcb by \hfactor
   \grcalcc = \grcalca
   \advance \grcalcc by \factor
   \grcalcd = \grrow
   \multiply \grcalcd by \factor
   \grcalce = \grcalcd
   \advance \grcalce by -\tfactor
   \grcalcf = \grcalcd
   \advance \grcalcf by -\hfactor
   \grcalcg = \grcalce
   \advance \grcalcg by -\tfactor
   \grcalch = \grcalcd
   \advance \grcalch by -\factor
   \put(\grcalcb,\grcalcf){\circle{\hfactor}}
   \qbezier(\grcalcc,\grcalcd)(\grcalcc,\grcalce)(\grcalcb,\grcalcf) 
   \qbezier(\grcalcb,\grcalcf)(\grcalca,\grcalcg)(\grcalca,\grcalch) 
   \advance \grcalcf by -\dfactor
   \advance \grcalcb by \sfactor
   \qbezier(\grcalcc,\grcalch)(\grcalcc,\grcalcg)(\grcalcb,\grcalcf) 
   \advance \grcalcf by \sfactor
   \advance \grcalcb by -\tfactor
   \qbezier(\grcalca,\grcalcd)(\grcalca,\grcalce)(\grcalcb,\grcalcf) 
   \advance \grcolumn by 2}
 \newcommand{\gu}[1]{
   \grcalca = \grcolumn
   \multiply \grcalca by \factor
   \grcalcd = \hfactor
   \multiply \grcalcd by #1
   \advance \grcalca by \grcalcd
   \grcalcb = \grrow
   \advance \grcalcb by -1
   \multiply \grcalcb by \factor
   \put(\grcalca,\grcalcb) {\line(0,1){\hfactor}} 
   \advance \grcalcb by \hfactor
   \put(\grcalca,\grcalcb) {\circle*{3}}
   \advance \grcolumn by #1}
 \newcommand{\gcu}[1]{
   \grcalca = \grcolumn
   \multiply \grcalca by \factor
   \grcalcd = \hfactor
   \multiply \grcalcd by #1
   \advance \grcalca by \grcalcd
   \grcalcb = \grrow
   \multiply \grcalcb by \factor
   \put(\grcalca,\grcalcb) {\line(0,-1){\hfactor}} 
   \advance \grcalcb by -\hfactor
   \put(\grcalca,\grcalcb) {\circle*{3}}
   \advance \grcolumn by #1}
 \newcommand{\gmp}[1]{
   \grcalca = \grcolumn
   \multiply \grcalca by \factor
   \advance \grcalca by \hfactor
   \grcalcb = \grrow
   \multiply \grcalcb by \factor
   \put(\grcalca,\grcalcb) {\line(0,-1){\dfactor}} 
   \advance \grcalcb by -\factor
   \put(\grcalca,\grcalcb) {\line(0,1){\dfactor}} 
   \advance \grcalcb by \hfactor
   \grcalcc = \factor
   \advance \grcalcc by -\qfactor
   \put(\grcalca,\grcalcb) {\circle{\grcalcc}}
   \put(\grcalca,\grcalcb) {\makebox(0,0){$\scriptstyle #1$}}
   \advance \grcolumn by 1}
 \newcommand{\gbmp}[1]{
   \grcalca = \grcolumn
   \multiply \grcalca by \factor
   \advance \grcalca by \hfactor
   \grcalcb = \grrow
   \multiply \grcalcb by \factor
   \put(\grcalca,\grcalcb) {\line(0,-1){\dfactor}} 
   \advance \grcalcb by -\factor
   \put(\grcalca,\grcalcb) {\line(0,1){\dfactor}} 
   \advance \grcalca by -\hfactor
   \advance \grcalca by \dfactor
   \advance \grcalcb by \dfactor
   \grcalcc = \factor
   \advance \grcalcc by -\sfactor
   \put(\grcalca,\grcalcb) {\framebox(\grcalcc,\grcalcc){$\scriptstyle #1$}}
   \advance \grcolumn by 1}
 \newcommand{\gbmpt}[1]{
   \grcalca = \grcolumn
   \multiply \grcalca by \factor
   \advance \grcalca by \hfactor
   \grcalcb = \grrow
   \multiply \grcalcb by \factor
   \put(\grcalca,\grcalcb) {\line(0,-1){\dfactor}} 
   \advance \grcalcb by -\factor
   \advance \grcalca by -\hfactor
   \advance \grcalca by \dfactor
   \advance \grcalcb by \dfactor
   \grcalcc = \factor
   \advance \grcalcc by -\sfactor
   \put(\grcalca,\grcalcb) {\framebox(\grcalcc,\grcalcc){$\scriptstyle #1$}}
   \advance \grcolumn by 1}
 \newcommand{\gbmpb}[1]{
   \grcalca = \grcolumn
   \multiply \grcalca by \factor
   \advance \grcalca by \hfactor
   \grcalcb = \grrow
   \multiply \grcalcb by \factor
   \advance \grcalcb by -\factor
   \put(\grcalca,\grcalcb) {\line(0,1){\dfactor}} 
   \advance \grcalca by -\hfactor
   \advance \grcalca by \dfactor
   \advance \grcalcb by \dfactor
   \grcalcc = \factor
   \advance \grcalcc by -\sfactor
   \put(\grcalca,\grcalcb) {\framebox(\grcalcc,\grcalcc){$\scriptstyle #1$}}
   \advance \grcolumn by 1}
 \newcommand{\gbmpn}[1]{
   \grcalca = \grcolumn
   \multiply \grcalca by \factor
   \advance \grcalca by \hfactor
   \grcalcb = \grrow
   \multiply \grcalcb by \factor
   \advance \grcalcb by -\factor
   \advance \grcalca by -\hfactor
   \advance \grcalca by \dfactor
   \advance \grcalcb by \dfactor
   \grcalcc = \factor
   \advance \grcalcc by -\sfactor
   \put(\grcalca,\grcalcb) {\framebox(\grcalcc,\grcalcc){$\scriptstyle #1$}}
   \advance \grcolumn by 1}
 \newcommand{\glmptb}{    
   \grcalca = \grcolumn
   \multiply \grcalca by \factor
   \advance \grcalca by \hfactor
   \grcalcb = \grrow
   \multiply \grcalcb by \factor
   \put(\grcalca,\grcalcb) {\line(0,-1){\dfactor}} 
   \advance \grcalcb by -\factor
   \put(\grcalca,\grcalcb) {\line(0,1){\dfactor}} 
   \advance \grcalca by -\hfactor
   \advance \grcalca by \dfactor
   \advance \grcalcb by \dfactor
   \put(\grcalca,\grcalcb) {\line(1,0){\factor}} 
   \advance \grcalcb by \factor
   \advance \grcalcb by -\sfactor
   \put(\grcalca,\grcalcb) {\line(1,0){\factor}} 
   \grcalcc = \factor
   \advance \grcalcc by -\sfactor
   \put(\grcalca,\grcalcb) {\line(0,-1){\grcalcc}} 
   \advance \grcolumn by 1}
 \newcommand{\glmpt}{    
   \grcalca = \grcolumn
   \multiply \grcalca by \factor
   \advance \grcalca by \hfactor
   \grcalcb = \grrow
   \multiply \grcalcb by \factor
   \put(\grcalca,\grcalcb) {\line(0,-1){\dfactor}} 
   \advance \grcalca by -\hfactor
   \advance \grcalca by \dfactor
   \advance \grcalcb by -\dfactor
   \put(\grcalca,\grcalcb) {\line(1,0){\factor}} 
   \advance \grcalcb by -\factor
   \advance \grcalcb by \sfactor
   \put(\grcalca,\grcalcb) {\line(1,0){\factor}} 
   \grcalcc = \factor
   \advance \grcalcc by -\sfactor
   \put(\grcalca,\grcalcb) {\line(0,1){\grcalcc}} 
   \advance \grcolumn by 1}
 \newcommand{\glmpb}{    
   \grcalca = \grcolumn
   \multiply \grcalca by \factor
   \advance \grcalca by \hfactor
   \grcalcb = \grrow
   \multiply \grcalcb by \factor
   \advance \grcalcb by -\factor
   \put(\grcalca,\grcalcb) {\line(0,1){\dfactor}} 
   \advance \grcalca by -\hfactor
   \advance \grcalca by \dfactor
   \advance \grcalcb by \dfactor
   \put(\grcalca,\grcalcb) {\line(1,0){\factor}} 
   \advance \grcalcb by \factor
   \advance \grcalcb by -\sfactor
   \put(\grcalca,\grcalcb) {\line(1,0){\factor}} 
   \grcalcc = \factor
   \advance \grcalcc by -\sfactor
   \put(\grcalca,\grcalcb) {\line(0,-1){\grcalcc}} 
   \advance \grcolumn by 1}
 \newcommand{\glmp}{    
   \grcalca = \grcolumn
   \multiply \grcalca by \factor
   \advance \grcalca by \dfactor
   \grcalcb = \grrow
   \multiply \grcalcb by \factor
   \advance \grcalcb by -\dfactor
   \put(\grcalca,\grcalcb) {\line(1,0){\factor}} 
   \advance \grcalcb by -\factor
   \advance \grcalcb by \sfactor
   \put(\grcalca,\grcalcb) {\line(1,0){\factor}} 
   \grcalcc = \factor
   \advance \grcalcc by -\sfactor
   \put(\grcalca,\grcalcb) {\line(0,1){\grcalcc}} 
   \advance \grcolumn by 1}
 \newcommand{\gcmptb}{    
   \grcalca = \grcolumn
   \multiply \grcalca by \factor
   \advance \grcalca by \hfactor
   \grcalcb = \grrow
   \multiply \grcalcb by \factor
   \put(\grcalca,\grcalcb) {\line(0,-1){\dfactor}} 
   \advance \grcalcb by -\factor
   \put(\grcalca,\grcalcb) {\line(0,1){\dfactor}} 
   \advance \grcalca by -\hfactor
   \advance \grcalcb by \dfactor
   \put(\grcalca,\grcalcb) {\line(1,0){\factor}} 
   \advance \grcalcb by \factor
   \advance \grcalcb by -\sfactor
   \put(\grcalca,\grcalcb) {\line(1,0){\factor}} 
   \advance \grcolumn by 1}
\newcommand{\gmpcu}[1]{
   \grcalca = \grcolumn
   \multiply \grcalca by \factor
   \advance \grcalca by \hfactor
   \grcalcb = \grrow
   \multiply \grcalcb by \factor
   \put(\grcalca,\grcalcb) {\line(0,-1){\dfactor}} 
   \advance \grcalcb by -\factor
   \advance \grcalcb by \hfactor
   \grcalcc = \factor
   \advance \grcalcc by -\qfactor
   \put(\grcalca,\grcalcb) {\circle{\grcalcc}}
   \put(\grcalca,\grcalcb) {\makebox(0,0){$\scriptstyle #1$}}
   \advance \grcolumn by 1}
\newcommand{\gmpu}[1]{
   \grcalca = \grcolumn
   \multiply \grcalca by \factor
   \advance \grcalca by \hfactor
   \grcalcb = \grrow
   \multiply \grcalcb by \factor
   \advance \grcalcb by -\factor
   \put(\grcalca,\grcalcb) {\line(0,1){\dfactor}} 
   \advance \grcalcb by \hfactor
   \grcalcc = \factor
   \advance \grcalcc by -\qfactor
   \put(\grcalca,\grcalcb) {\circle{\grcalcc}}
   \put(\grcalca,\grcalcb) {\makebox(0,0){$\scriptstyle #1$}}
   \advance \grcolumn by 1}      
 \newcommand{\gcmpt}{    
   \grcalca = \grcolumn
   \multiply \grcalca by \factor
   \advance \grcalca by \hfactor
   \grcalcb = \grrow
   \multiply \grcalcb by \factor
   \put(\grcalca,\grcalcb) {\line(0,-1){\dfactor}} 
   \advance \grcalcb by -\factor
   \advance \grcalca by -\hfactor
   \advance \grcalcb by \dfactor
   \put(\grcalca,\grcalcb) {\line(1,0){\factor}} 
   \advance \grcalcb by \factor
   \advance \grcalcb by -\sfactor
   \put(\grcalca,\grcalcb) {\line(1,0){\factor}} 
   \advance \grcolumn by 1}
 \newcommand{\gcmpb}{    
   \grcalca = \grcolumn
   \multiply \grcalca by \factor
   \advance \grcalca by \hfactor
   \grcalcb = \grrow
   \multiply \grcalcb by \factor
   \advance \grcalcb by -\factor
   \put(\grcalca,\grcalcb) {\line(0,1){\dfactor}} 
   \advance \grcalca by -\hfactor
   \advance \grcalcb by \dfactor
   \put(\grcalca,\grcalcb) {\line(1,0){\factor}} 
   \advance \grcalcb by \factor
   \advance \grcalcb by -\sfactor
   \put(\grcalca,\grcalcb) {\line(1,0){\factor}} 
   \advance \grcolumn by 1}
 \newcommand{\gcmp}{    
   \grcalca = \grcolumn
   \multiply \grcalca by \factor
   \grcalcb = \grrow
   \multiply \grcalcb by \factor
   \advance \grcalcb by -\factor
   \advance \grcalcb by \dfactor
   \put(\grcalca,\grcalcb) {\line(1,0){\factor}} 
   \advance \grcalcb by \factor
   \advance \grcalcb by -\sfactor
   \put(\grcalca,\grcalcb) {\line(1,0){\factor}} 
   \advance \grcolumn by 1}
 \newcommand{\grmptb}{    
   \grcalca = \grcolumn
   \multiply \grcalca by \factor
   \advance \grcalca by \hfactor
   \grcalcb = \grrow
   \multiply \grcalcb by \factor
   \put(\grcalca,\grcalcb) {\line(0,-1){\dfactor}} 
   \advance \grcalcb by -\factor
   \put(\grcalca,\grcalcb) {\line(0,1){\dfactor}} 
   \advance \grcalca by \hfactor
   \advance \grcalca by -\dfactor
   \advance \grcalcb by \dfactor
   \put(\grcalca,\grcalcb) {\line(-1,0){\factor}} 
   \advance \grcalcb by \factor
   \advance \grcalcb by -\sfactor
   \put(\grcalca,\grcalcb) {\line(-1,0){\factor}} 
   \grcalcc = \factor
   \advance \grcalcc by -\sfactor
   \put(\grcalca,\grcalcb) {\line(0,-1){\grcalcc}} 
   \advance \grcolumn by 1}
 \newcommand{\grmpt}{    
   \grcalca = \grcolumn
   \multiply \grcalca by \factor
   \advance \grcalca by \hfactor
   \grcalcb = \grrow
   \multiply \grcalcb by \factor
   \put(\grcalca,\grcalcb) {\line(0,-1){\dfactor}} 
   \advance \grcalca by \hfactor
   \advance \grcalca by -\dfactor
   \advance \grcalcb by -\dfactor
   \put(\grcalca,\grcalcb) {\line(-1,0){\factor}} 
   \advance \grcalcb by -\factor
   \advance \grcalcb by \sfactor
   \put(\grcalca,\grcalcb) {\line(-1,0){\factor}} 
   \grcalcc = \factor
   \advance \grcalcc by -\sfactor
   \put(\grcalca,\grcalcb) {\line(0,1){\grcalcc}} 
   \advance \grcolumn by 1}
 \newcommand{\grmpb}{    
   \grcalca = \grcolumn
   \multiply \grcalca by \factor
   \advance \grcalca by \hfactor
   \grcalcb = \grrow
   \multiply \grcalcb by \factor
   \advance \grcalcb by -\factor
   \put(\grcalca,\grcalcb) {\line(0,1){\dfactor}} 
   \advance \grcalca by \hfactor
   \advance \grcalca by -\dfactor
   \advance \grcalcb by \dfactor
   \put(\grcalca,\grcalcb) {\line(-1,0){\factor}} 
   \advance \grcalcb by \factor
   \advance \grcalcb by -\sfactor
   \put(\grcalca,\grcalcb) {\line(-1,0){\factor}} 
   \grcalcc = \factor
   \advance \grcalcc by -\sfactor
   \put(\grcalca,\grcalcb) {\line(0,-1){\grcalcc}} 
   \advance \grcolumn by 1}
 \newcommand{\grmp}{    
   \grcalca = \grcolumn
   \multiply \grcalca by \factor
   \advance \grcalca by \factor
   \advance \grcalca by -\dfactor
   \grcalcb = \grrow
   \multiply \grcalcb by \factor
   \advance \grcalcb by -\dfactor
   \put(\grcalca,\grcalcb) {\line(-1,0){\factor}} 
   \advance \grcalcb by -\factor
   \advance \grcalcb by \sfactor
   \put(\grcalca,\grcalcb) {\line(-1,0){\factor}} 
   \grcalcc = \factor
   \advance \grcalcc by -\sfactor
   \put(\grcalca,\grcalcb) {\line(0,1){\grcalcc}} 
   \advance \grcolumn by 1}
 \newcommand{\gwmuh}[3]{    
   \grcalca = \grcolumn
   \multiply \grcalca by \factor
   \grcalcb = #2
   \advance \grcalcb by #3
   \multiply \grcalcb by \qfactor
   \advance \grcalca by \grcalcb
   \grcalcb = \grrow
   \multiply \grcalcb by \factor
   \grcalcc = #3
   \advance \grcalcc by -#2
   \multiply \grcalcc by \hfactor
   \grcalcd = \factor
   \advance \grcalcd by \hfactor
   \put(\grcalca,\grcalcb){\oval(\grcalcc,\grcalcd)[b]}
   \grcalca = \grcolumn
   \multiply \grcalca by \factor
   \grcalcc = #1
   \multiply \grcalcc by \hfactor
   \advance \grcalca by \grcalcc
   \advance \grcalcb by -\hfactor
   \advance \grcalcb by -\qfactor
   \put(\grcalca,\grcalcb) {\line(0,-1){\qfactor}} 
   \advance \grcolumn by #1}
 \newcommand{\gwcmh}[3]{   
   \grcalca = \grcolumn
   \multiply \grcalca by \factor
   \grcalcb = #2
   \advance \grcalcb by #3
   \multiply \grcalcb by \qfactor
   \advance \grcalca by \grcalcb
   \grcalcb = \grrow
   \advance \grcalcb by -1
   \multiply \grcalcb by \factor
   \grcalcc = #3
   \advance \grcalcc by -#2
   \multiply \grcalcc by \hfactor
   \grcalcd = \factor
   \advance \grcalcd by \hfactor
   \put(\grcalca,\grcalcb){\oval(\grcalcc,\grcalcd)[t]}
   \grcalca = \grcolumn
   \multiply \grcalca by \factor
   \grcalcc = #1
   \multiply \grcalcc by \hfactor
   \advance \grcalca by \grcalcc
   \advance \grcalcb by \factor
   \put(\grcalca,\grcalcb) {\line(0,-1){\qfactor}} 
   \advance \grcolumn by #1}
 \newcommand{\gsbox}[1]{
   \grcalca = \grcolumn
   \multiply \grcalca by \factor
   \grcalcb = \grrow
   \multiply \grcalcb by \factor
   \advance \grcalcb by -\factor
   \grcalcc = #1
   \multiply \grcalcc by \factor
   \grcalcd = \factor
   \put(\grcalca,\grcalcb){\framebox(\grcalcc,\grcalcd){}}}
\begin{document}
\title[Entwined modules over cowreaths]
{Frobenius and separable functors for the category of entwined modules over cowreaths, I: General theory}
\author{D. Bulacu}
\address{Faculty of Mathematics and Informatics, University
of Bucharest, Str. Academiei 14, RO-010014 Bucharest 1, Romania}
\email{daniel.bulacu@fmi.unibuc.ro}
\author{S. Caenepeel}
\address{Faculty of Engineering, 
Vrije Universiteit Brussel, B-1050 Brussels, Belgium}
\email{scaenepe@vub.ac.be}
\author{B. Torrecillas}
\address{Department of Algebra and Analysis\\
Universidad de Almer\'{\i}a\\
E-04071 Almer\'{\i}a, Spain}
\email{btorreci@ual.es}
\subjclass[2010]{Primary 16T05; Secondary 18D10; 16T15; 16S40}
\keywords{Module category, cowreath, entwined module, Frobenius functor, separable functor, Frobenius coalgebra, 
coseparable coalgebra}
\thanks{The first author was supported by the UEFISCDI Grant PN-II-ID-PCE-2011-
3-0635, contract no. 253/5.10.2011 of CNCSIS. The second author was supported by research project G.0117.10  
``Equivariant Brauer groups and Galois deformations'' from
FWO-Vlaanderen. The third author was partially supported by FQM 211 from Junta 
Andaluc\'{\i}a and by research project MTM2014-54439 from MEC.
The first author thanks the Vrije Universiteit Brussel and the Universidad 
de Almer\'{\i}a for their support and warm hospitality. 
The authors also thank Bodo Pareigis for sharing his ``diagrams" program.}

\begin{abstract}
Entwined modules over cowreaths in a monoidal category are introduced. They can be identified to coalgebras in an appropriate monoidal category. It is investigated when such coalgebras are Frobenius (resp. separable), and when  the forgetful functor from entwined modules to representations of the underlying algebra is Frobenius (resp. separable). These properties are equivalent when the unit object of the category is a $\otimes$-generator. 
\end{abstract}

\maketitle
\section*{Introduction}
This paper is part of a series that has as the final aim the study of Frobenius and separable 
properties for forgetful functors defined on categories of entwined modules over cowreaths obtained from certain 
quasi-Hopf actions and coactions. In this paper we present a general theory that allows us not only to achieve the 
mentioned goal but also to unify similar results obtained so far for various generalizations of Hopf algebras. It can be seen as a sequel of 
\cite{bc4, dbbt2} and as the theoretical support for \cite{bctFSA}.  

Central elements in the enveloping algebra $A\ot A^{\rm op}$ of an algebra $A$ are often called
Casimir elements, and they play a crucial role in the theory of Frobenius and of separable algebras.
The fact that they appear in both theories is well understood, and has a categorical explanation
related to the properties that an algebra is Frobenius if the restriction of scalars functor $G$ is Frobenius,
that is, its right adjoint is also a left adjoint, and that it is separable if and only if $G$ is separable in the
sense of \cite{nbo}. This can be exploited in order to study Frobenius and separable functors simultaneously.
This idea originated in the study of separability and Frobenius properties for Doi-Hopf modules in 
\cite{cmz,cmz0,cmz2}, and was later refined and applied to entwined modules, see \cite{brFM}.

Entwined modules over entwining structures were introduced by Brzezi\'nski in \cite{br} in order to 
extend the Hopf-Galois theory to coalgebras. 
One of the attractive aspects is that many structures that appear in Hopf algebra theory, such as
relative Hopf modules, Doi-Hopf and Yetter-Drinfeld modules, turn out to be special cases. An entwining 
structure is a kind of local braiding between an algebra and a coalgebra. In fact an entwining structure 
with underlying algebra $A$ can be viewed as a coalgebra in the monoidal category $\Tc_A$ of transfer morphisms
through $A$ as introduced by Tambara in \cite{tambara}. Tambara's construction can be obtained from 
Street's formal theory of monads, see \cite{street}. Monads in a 2-category $\Cc$ can be organized into 
a new 2-category ${\rm Mnd}(\Cc)$. For an algebra (or monad) in a strict monoidal category $\Cc$ (a 2-category with single 0-cell),
Tambara's category $\Tc_A$ is the category ${\rm Mnd}(\Cc)(A,A)$ of endomorphisms of $A$ in ${\rm Mnd}(\Cc)$.

There is a second way to organize monads into a 2-category, see \cite{LackRoss}; the second 2-category is the Eilenberg-Moore 
2-category ${\rm EM}(\Cc)$. It coincides with ${\rm Mnd}(\Cc)$ at the level of 0-cells and 1-cells, but has different 
2-cells. A cowreath in $\Cc$ is a comonad in ${\rm EM}(\Cc)$, and consists of an algebra in $\Cc$ together with 
a coalgebra in $\Tc_A^\#={\rm EM}(\Cc)(A,A)$, the category of endomorphisms of $A$ in the Eilenberg-Moore 2-category. 
Note that, in the case where ${\cal K}$ is a $2$-category, a comonad in in $EM({\cal K})$ was called by Street a mixed wreath, 
see \cite{StreetMW}. So the cowreaths we are dealing with are nothing but mixed wreaths (or comonads) in $EM({\cal K})$ in the sense of 
Street, in the case where ${\cal K}$ is a $2$-category with a single $0$-cell. If this is the case, we can introduce 
entwined modules over a cowreath. The main aim of this paper is to study when
the forgetful functor from entwined modules to $A$-modules is Frobenius or separable. This is related to
the question when a coalgebra in $\Tc_A^\#$ is a Frobenius or a coseparable coalgebra.

Compared to the classical situation, we have a two-fold generalization: first of all, the category of vector
spaces is replaced by an arbitrary (strict) monoidal category $\Cc$. The best results
are obtained in the situation where the unit object $\un{1}$ is a $\ot$-generator of the monoidal category $\Cc$,
as introduced in \cite{dbbt2}. The following monoidal categories satisfy this condition: 
the category of vector spaces, the category of bimodules ${}_R{\cal M}_R$ over an Azumaya $k$-algebra $R$,
the category of finite dimensional  Hilbert complex vector spaces ${\rm FdHilb}$, and the category ${\cal Z}_k$
as introduced in \cite{cl}. We refer to \cite[Examples 3.2]{dbbt2}.

Secondly, we work over cowreaths which can be viewed as generalized entwining structures. Our motivation 
to investigate such cowreaths comes from the applications that we have in mind, namely the study of categories 
of Doi-Hopf modules, two-sided Hopf modules and Yetter-Drinfeld modules 
over a quasi-Hopf algebra, which can be defined as entwined modules over certain cowreaths that are not ordinary entwining 
structures. This study will be done in the forthcoming paper \cite{bctFSA}.

In Sections \ref{se:2}-\ref{se:6}, we present our general theory. In \seref{1}, we present preliminary results on 
monoidal categories and bimodules. In \seref{2}, we introduce cowreaths in monoidal categories, 
and entwined modules over them. In \seref{3}, we introduce generalized factorization structures; these are algebras 
in $\Tc_A^{\#}$, or, equivalently, wreaths in $\Cc$. Given a generalized factorization structure, we can define an algebra 
in $\Cc$, called the wreath product algebra or the generalized smash product. Duality arguments turn cowreaths into 
generalized factorization structures, and the category of entwined modules is isomorphic to the category of
modules over the generalized smash product, see \thref{gntasmod}. In \seref{4}, we discuss when the 
forgetful functor $F$ is Frobenius. $F$ always has a right adjoint $G$; in order to investigate when $G$ is also
a left adjoint, we need to investigate natural transformations from the identity functor to $FG$, and from $GF$ to
the identity functor. Propositions \ref{pr:FrobElem} and \ref{pr:CasMor} tell us that the necessary and sufficient information that
is needed to produce such natural transformations is encoded in the so-called Frobenius elements and
Casimir morphisms, at least in the case where $\un{1}$ is a $\ot$-generator. Using these results, it is straightforward to
prove the main \thref{FrobGenEntwMod}, stating that $F$ is a Frobenius functor if and only if the coalgebra corresponding
to the given cowreath is Frobenius. In \seref{5} it is shown that there is a strong monoidal
functor from the category of generalized transfer morphisms $\Tc_A^{\#}$ to the category of $A$-bimodules,
as introduced in the preliminary \seref{bimod}. Consequently, a cowreath produces
an $A$-coring, that is a coalgebra in the category of $A$-bimodules. The main result is that this $A$-coring is
Frobenius if and only if the corresponding coalgebra $(A, X)$ in $\Tc_A^{\#}$ is Frobenius, see \thref{coFrobTvscoring}.
Under the assumption that $X$ has a right adjoint $Y$, we have additional results, see \thref{Frobcaractfinitecase}.
Separability is investigated in \seref{6}. The main result is \thref{ForFunsepvsSepcoal} stating that
a coalgebra $(X, \psi)$ in $\Tc_A^{\#}$ is coseparable if and only if the forgetful functor is coseparable. Again, additional results
can be stated if $X$ has a right adjoint.

Our theory can be applied to various cowreaths coming from (co)actions of Hopf algebras and their generalizations, 
see Section 5 of the paper \cite{bc4}. But perhaps the most interesting are those cowreaths $(A, X)$ with $X$ regarded as 
an object in ${\cal T}_A^\#$ rather than ${\cal T}_A$. Such examples occur in the quasi-Hopf case, leading, for instance, 
to categories of Doi-Hopf modules, two-sided Hopf modules and Yetter-Drinfeld modules 
over a quasi-Hopf algebra, respectively. As we already mentioned above, when they are Frobenius or separable cowreaths 
will be the topic of the forthcoming paper \cite{bctFSA}. 
\section{Preliminaries}\selabel{1}
\subsection{Monoidal categories}\selabel{1.1}
\subsubsection*{Monoidal categories}
A monoidal category is a category $\Cc$ together with 
a functor $\ot:\ \Cc\times\Cc\to \Cc$, called the tensor product, an
object $\un{1}\in \Cc$, called the unit object, and natural isomorphisms 
$a:\ \ot\circ (\ot\times {\rm Id})\to \ot\circ ({\rm Id}\times \ot)$ (the associativity
constraint), $l:\ \ot\circ (\un{1}\times {\rm Id})\to {\rm Id}$ (the left unit constraint) and 
$r:\ \ot\circ ({\rm Id}\times \un{1})\to {\rm Id}$ (the right unit constraint) satisfying appropriate
coherence conditions, see for example \cite[XI.2]{k} for a detailed discussion.
$\Cc$ is called strict if $a$, $l$ and $r$ are the identity natural transformations.
It is well-known that every monoidal category is monoidally equivalent to a strict monoidal
category, and this enables us to assume without loss of generality that $\Cc$ is strict. 
We will often delete the tensor symbol $\ot$, and write $X\ot Y=XY$. We write $X^n$ for the
tensor product of $n$ copies of $X$.
The identity 
morphism of an object $X\in \Cc$ will be denoted by ${\rm Id}_X$ or simply $X$.
For morphisms ${\rm Id}_X=X :\ X\ra X$, 
$f:\ X\ra Y$, $g: X Y\ra Z$ and $h:\ X\ra Y Z$ in $\Cc$, we adopt the following graphical notation
\[{\rm Id}_X=X=
{\footnotesize
\gbeg{1}{3}
\got{1}{X}\gnl
\gcl{1}\gnl
\gob{1}{X}
\gend}
\hspace{2mm},\hspace{2mm}
f= {\footnotesize
\gbeg{1}{3}
\got{1}{X}\gnl
\gmp{f}\gnl
\gob{1}{Y}
\gend}
\mbox{\hspace{2mm},\hspace{2mm}}
g={\footnotesize
\gbeg{2}{5}
\got{1}{X}\got{1}{Y}\gnl
\gcl{1}\gcl{1}\gnl
\gsbox{2}\gnot{\hspace{5mm}g}\gnl
\gcn{1}{1}{2}{2}\gnl
\gob{2}{Z}
\gend}
\mbox{\hspace{2mm}and\hspace{2mm}}
h={\footnotesize 
\gbeg{2}{5}
\got{2}{X}\gnl
\gcn{1}{1}{2}{2}\gnl
\gsbox{2}\gnot{\hspace{5mm}h}\gnl
\gcl{1}\gcl{1}\gnl
\gob{1}{Y}\gob{1}{Z}
\gend}~~.
\]

\subsubsection*{Algebras and coalgebras}
An algebra  in $\Cc$ is a triple $(A,{m},\eta)$, where
$A$ is an object in $\Cc$ and ${m}:\ AA\ra A$ (the multiplication) and ${\eta}:\ \un{1}\ra A$
(the unit) are morphisms in $\Cc$ satisfying the associativity and unit conditions
${m}\circ mA= {m}\circ Am$ and
${m}\circ \eta A= {m}\circ A\eta =A$.
The graphical notation for ${m}$ and $\eta$ is the following:
$${m}=\gbeg{2}{3}
\got{1}{A}\got{1}{A}\gnl
\gmu\gnl
\gob{2}{A}
\gend~~{\rm and}~~
{\eta}= \gbeg{1}{3}
\got{1}{\un{1}}\gnl
\gu{1}\gnl
\gob{1}{A}
\gend.
$$
We use $A$ as a shorter notation for the algebra $(A,m,\eta)$; the multiplication on an algebra $A$
is typically denoted by $m$, and the unit by $\eta$; we put subscripts whenever convenient,
so that we can write $A=(A,m_A,\eta_A)$. Similar conventions are used for other structures,
such as coalgebras, modules over an algebra, adjunctions, entwining structures etc.\\
A coalgebra in $\Cc$ is a triple $C=(C,~\Delta:\ C\to C C,~\varepsilon:\ C\to \un{1})$, satisfying the
appropriate coassociativity and counit conditions.
The graphical notation takes the form
$$
\Delta=\gbeg{2}{3}
\got{2}{C}\gnl
\gcmu\gnl
\gob{1}{C}\gob{1}{C}
\gend~~{\rm and}~~
\varepsilon=\gbeg{1}{3}
\got{1}{C}\gnl
\gcu{1}\gnl
\gob{1}{\un{1}}
\gend.
$$

\subsubsection*{Adjunctions}
An adjunction $X\dashv Y$ in $\Cc$ is a quadruple $(X,Y,b,d)$, with $X,Y$ objects in $\Cc$ and morphisms
$b:\ \un{1}\to YX$ and $d:\ XY\to \un{1}$ satisfying
\begin{equation}\eqlabel{evcoev}
Yd\circ bY=Y~~{\rm and}~~dX\circ Xb=X.
\end{equation}
With the graphical notation
$$d=\gbeg{2}{3}
\got{1}{X}\got{1}{Y}\gnl
\gev\gnl
\gob{2}{\un{1}}
\gend,~~
b=\gbeg{2}{3}
\got{2}{\un{1}}\gnl
\gdb\gnl
\gob{1}{Y}\gob{1}{X}
\gend$$
\equref{evcoev} can be rewritten as
$$
{\footnotesize
\gbeg{3}{4}
\gvac{2}\got{1}{Y}\gnl
\gdb\gcl{1}\gnl
\gcl{1}\gev\gnl
\gob{1}{Y}
\gend}=
{\footnotesize
\gbeg{1}{3}
\got{1}{Y}\gnl
\gcl{1}\gnl
\gob{1}{Y}
\gend}~,~
{\footnotesize
\gbeg{3}{4}
\got{1}{X}\gnl
\gcl{1}\gdb\gnl
\gev\gcl{1}\gnl
\gvac{2}\gob{1}{X}
\gend}=
{\footnotesize
\gbeg{1}{3}
\got{1}{X}\gnl
\gcl{1}\gnl
\gob{1}{X}
\gend}~~.
$$
$Y$ is called a right adjoint of $X$, and $X$ a left adjoint of $Y$.
Right adjoints are unique in the following sense. If $(X,Y',b',d')$ is another adjunction,
then $\lambda=Y'd\circ b'Y:\ Y\to Y'$ is an isomorphism with inverse $\lambda^{-1}=Yd'\circ bY'$.
It is easy to show that $\lambda$ is designed in such a way that
\begin{equation}\eqlabel{1.1.1}
b'=\lambda X\circ b~~{\rm and}~~d=d'\circ X\lambda.
\end{equation}
For two adjunctions $(X,Y,b,d)$ and $(X',Y',b',d')$, we have a new adjunction
$$(XX',Y'Y,b\cdot b'=Y'bX'\circ b',d\cdot d'=d\circ Xd'Y).$$
In particular, we have an adjunction $(X^2,Y^2,b^2=b\cdot b,d^2=d\cdot d)$. Given a morphism $f:\ X\to X'$,
we have
$$g=Yd' \circ YfY' \circ bY':\ Y'\to X',$$
which reproduces $f=dX' \circ XgX' \circ Xb'$.\\
If every object in $\Cc$ has a right (resp. left) adjoint, then we say that $\Cc$ has right (resp. left) duality;
$\Cc$ is called rigid if it has left and right duality. Assume that $\Cc$ has right duality, and choose a right dual
${}^*\hspace*{-2pt}X$ for every object $X$. For every morphism $f:\ X\to X'$ in $\Cc$, we have
${}^*\hspace*{-1pt}f:\ {}^*\hspace*{-2pt}X'\to {}^*\hspace*{-2pt}X$, and this defines a functor ${}^*\hspace*{-1pt}(-):\ \Cc\to \Cc^{\rm op}$.
If $\Cc$ is rigid, then $\bigl( {}^*\hspace*{-1pt}(-), (-)^*\bigr)$ is a pair of inverse equivalences between $\Cc$ and $\Cc^{\rm op}$.\\
For $X,X'\in \Cc$, ${}^*\hspace*{-1pt}(XX')$ and ${}^*\hspace*{-2pt}X'{}^*\hspace*{-2pt}X$ are right duals of $XX'$,
so we have an isomorphism $\varphi_2(X,X'):\ {}^*\hspace*{-1pt}(XX')\to {}^*\hspace*{-2pt}X'{}^*\hspace*{-2pt}X$.
$(\un{1}, \un{1}, \un{1}, \un{1})$ is an adjunction, so we can put ${}^*\un{1}=\un{1}$, and define 
$\varphi_0=\un{1}:\ \un{1}\to \un{1}$. 
$({}^*\hspace*{-1pt}(-),\varphi_0,\varphi_2):\ \Cc\to \Cc^{\rm oprev}$ is a strong monoidal functor.\\
Let $C$ be a coalgebra in $\Cc$, and assume that we have an adjunction $C\dashv A$. Then $A$ is an algebra, with structure maps
\begin{equation}\eqlabel{dualalgstrofcoalg}
m=Ad^2\circ A\Delta AA \circ bAA:\ AA\to A~~{\rm and}~~\eta= A\varepsilon \circ b:\ \un{1}\to A.
\end{equation}
In this situation $C$ is a right $A$-module (the definition of an $A$-module is given below), with structure map
\begin{equation}\eqlabel{modstrcoFrob}
\mu=Cd\circ \Delta A.
\end{equation}
In a similar way, if a coalgebra $C$ has a left adjoint $A$, then $A$ is an algebra, and $C$ is a left $A$-module.

\subsubsection*{Module categories}
Let $\Cc$ be a monoidal category. A right $\Cc$-category 
is a quadruple $(\Dc , \diamond, \Psi, r)$, where $\Dc$ is a category, 
$\diamond:\ \Dc\times \Cc\ra \Dc$ is a functor, and 
$\Psi:\ \diamond\circ (\diamond\times\Id)\ra \diamond\circ(\Id\times\ot)$ 
and ${\bf r}:\ \diamond\circ ({\rm Id}\times \un{1})\ra {\rm Id}$ are 
natural isomorphisms such that the diagrams
\[
\footnotesize{
\xymatrix{
((M  X)  Y)  Z \ar[d]_-{\Psi_{M, X, Y}  {Z}}
                                           \ar[rr]^-{\Psi_{M  X, Y, Z}}
                                           && ~~(M  X)  (Y  Z)\ar[rr]^{\Psi_{M,X,YZ}}&&M(X(YZ)) \\
(M (X  Y))  Z ~~\ar[rr]^-{\Psi_{M, X  Y, Z}}&&
                                     M ((X  Y)  Z)\ar[rru]_-{~~{M}  a_{X, Y, Z}}&&
                                     }
                                     \hspace*{4mm}{\rm and}\hspace*{4mm}
\xymatrix{
({M}  \un{1})  X \ar[rr]^-{\Psi_{{M}, \un{1}, X}} 
                                  \ar[drr]_-{{\bf r}_{M}  {X}} 
                                   &&{M}  (\un{1}  X)\ar[d]^-{{{M}}  l_X}\\
                                    &&X     
}}
\]
commute, for all $M\in \Dc$ and $X, Y, Z\in \Cc$. Obviously $\Cc$ is itself a right
$\Cc$-category. As before, we deleted the diamond product symbols, and wrote $MX=M\diamond X$,
for $M\in \Dc$ and $X\in \Cc$. The above mentioned coherence
theorem for monoidal categories can be extended to $\Cc$-categories, enabling us to assume throughout
that $\Psi$ and ${\bf r}$ are natural identities. In the literature,
$\Cc$-categories are also named module categories.\\
Let $\Dc$ be a right $\Cc$-category, and consider an algebra $A$ in $\Cc$.
A right $A$-module in $\Dc$ is a pair $M=(M,\mu)$, with $M\in \Dc$ and $\mu:\ M A\to M$ satisfying
$\mu\circ M\eta =M$ and $\mu\circ \mu A= \mu\circ  M m$.
A morphism $f:\ M\to N$ between two right $A$-modules $M$ and $N$ in $\Dc$ is called right
$A$-linear if $f\circ \mu=\mu\circ f A$. 
$\Dc_A$ will be the category of right $A$-modules and right 
$A$-linear morphisms in $\Dc$. 
In a similar way, we can define left $A$-modules $N=(N,\nu)$ in a left $\Cc$-category $\Ec$ and the category
${}_{A}\Ec$.  We will typically use the notation $\mu$ for a right action and $\nu$ for a
left action. The next step is to introduce two-sided $\Cc$-categories,
and two-sided $A$-bimodules in a two-sided $\Cc$-category. We leave it to the reader to formulate
the precise definitions.\\
We can also define the notions of a right $C$-comodule $(M,\rho)$ in a right $\Cc$-category $\Dc$, and right
$C$-colinearity of a morphism between two right $C$-comodules in $\Dc$.
The category of right comodules and right $C$-colinear morphisms in $\Dc$ will be denoted as $\Dc^C$. 
We will use the following diagrammatic notation for actions and coactions:
$$\mu= \gbeg{2}{3}
\got{1}{M}\got{1}{A}\gnl
\grm\gnl
\gob{1}{M}
\gend~~~
,~~~\nu=\gbeg{2}{3}
\got{1}{A}\got{1}{N}\gnl
\glm\gnl
\gob{3}{N}
\gend~~~
,~~~
\rho=
{\footnotesize
\gbeg{2}{3}
\got{1}{M}\gnl
\grcm\gnl
\gob{1}{M}\gob{1}{C}
\gend}.
$$

\subsection{The category of bimodules}\selabel{bimod}
The results in this Subsection will be needed in Sections \ref{se:5} and \ref{se:6}. The results are well-known,
see for example \cite{par}, \cite{sch} or \cite{bc3}. What follows is an original reformulation, which is why we decided
to keep the details. 
Let $\Cc$ be a (strict) monoidal category with coequalizers. 
Recall that $X\in \Cc$ is called left coflat if the functor $- X:\ \Cc\to \Cc$ preserves coequalizers. Let $A$ be an algebra in $\Cc$.
For $X\in \Cc_A$ and $Y\in {}_A\Cc$, $(X\ot_A Y,q)$ is the coequalizer of the parallel morphisms
$\mu Y,~X\nu:\ XAY\to XY$:
$$
\xymatrix{
XAY\ar@<-.5ex>[rr]_(.54){\mu Y} 
\ar@<.5ex>[rr]^(.55){X\nu}&&
XY\ar[r]^(.40){q} &X\ot_AY .
}
$$
We compactify our notation by writing $X\ot_A Y=X\ota Y$. Now let $f:\ X\to X'$ in $\Cc_A$ and $g:\ Y\to Y'$ in ${}_A\Cc$.
The universal property of coequalizers tells us that there is a unique $f\ot_A g=f\ota g$ in $\Cc$ such that \equref{tensor}
commutes.
\begin{equation}\eqlabel{tensor}
\xymatrix{
XAY\ar[d]^{fAg}\ar@<-.5ex>[rr]_(.54){\mu Y} 
\ar@<.5ex>[rr]^(.55){X\nu}&&
XY\ar[d]^{fg}\ar[r]^{q} &X\ota Y\ar@{.>}[d]^{\exists ! f\ota g}\\
X'AY'\ar@<-.5ex>[rr]_(.54){\mu Y'} 
\ar@<.5ex>[rr]^(.55){X'\nu}&&
X'Y'\ar[r]^{q} &X'\ota Y'}
\end{equation}

\begin{proposition}\prlabel{bim1}
Let $X\in \Cc_A$ and $M\in \Cc$. Then $(AM, mM)\in {}_A\Cc$, and
$$
\xymatrix{
XAAM\ar@<-.5ex>[rr]_(.54){\mu AM} 
\ar@<.5ex>[rr]^(.55){XmM}&&
XAM\ar[r]^{\mu M} &XM 
}
$$
is a coequalizer in $\Cc$. If $M\in \Cc_A$ (resp. $X\in {}_A\Cc_A$), then this is also a coequalizer in $\Cc_A$
(resp. ${}_A\Cc$).
\end{proposition}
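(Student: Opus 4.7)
The plan is to show that the parallel pair $(\mu AM, XmM)$ in $\Cc$ admits a contractible (absolute split) coequalizer with coequalizing map $\mu M$. Once this is done the proposition follows at once: contractible coequalizers are preserved by every functor, so in particular by the forgetful functors from $\Cc_A$ and ${}_A\Cc$, and it will suffice to check that the splittings live in the appropriate module category.

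First I check that $(AM, mM)$ is a left $A$-module in $\Cc$; this is immediate from the associativity and unit axioms for $A$ tensored on the right with $M$. Next, the identity $\mu M\circ \mu AM=\mu M\circ XmM$ follows by tensoring with $M$ the associativity of the right $A$-action on $X$, namely $\mu\circ \mu A=\mu\circ Xm$.

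The main step is then to exhibit the contractible-coequalizer structure. I take
$$s=X\eta M\colon XM\to XAM \quad\text{and}\quad t=XA\eta M\colon XAM\to XAAM,$$
and verify the three identities
$$\mu M\circ s=XM,\qquad XmM\circ t=XAM,\qquad \mu AM\circ t=s\circ \mu M.$$
The first two reduce respectively to the right unit axiom for the $A$-module $X$ and to the right unit axiom for the algebra $A$. The third, which is the only non-obvious point, is proven via the interchange law in the strict monoidal category $\Cc$: writing $XM=X\ot \un{1}\ot M$, both sides are easily seen to equal $\mu\ot \eta\ot M$ regarded as a morphism $(XA)\ot \un{1}\ot M\to X\ot A\ot M$.

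For the last assertion, when $M\in \Cc_A$ the objects $XAAM,XAM,XM$ inherit right $A$-actions via the action on the $M$-factor, and the maps $\mu AM,XmM,\mu M,s,t$ are all right $A$-linear because they operate on tensor factors strictly to the left of $M$. Hence the entire contractible coequalizer already lives inside $\Cc_A$, and $\mu M$ is a coequalizer there as well. The case $X\in {}_A\Cc_A$ is entirely analogous, the left $A$-linearity of $s$ and $t$ being again a direct interchange computation. The only real obstacle in the argument is thus the third identity above; everything else amounts to a routine check of the algebra, module and interchange axioms.
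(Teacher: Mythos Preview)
Your proof is correct and amounts to the same argument as the paper's: the paper verifies the universal property directly by setting $g=f\circ X\eta M$ for a given $f$, and the computation showing $g\circ\mu M=f$ uses precisely your splitting identities $s\circ\mu M=\mu AM\circ t$ and $XmM\circ t=\mathrm{id}$. Your split-coequalizer packaging is a clean way to organize this and has the mild bonus that absoluteness immediately yields the $\Cc_A$ and ${}_A\Cc$ statements without re-running the universal-property check.
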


\begin{proof}
Let $f:\ XAM\to P$ be such that $f\circ \mu AM= f\circ XmM$. We have to prove the existence and uniqueness
of $g:\ XM\to P$ such that $f=g\circ \mu M$.\\
If $g$ exists, then it is unique since
\begin{equation}\eqlabel{bim1.1}
f\circ X\eta M= g\circ \mu M \circ X\eta M=g.
\end{equation}
$g= f\circ X\eta M$ is such that 
$g\circ \mu M= f\circ X\eta M\circ \mu M=f\circ \mu A M\circ XA\eta M=
f\circ XmM\circ XA\eta M=f$. Finally, let $M\in \Cc_A$, and assume that $f$ is right $A$-linear. The morphism
$g$ defined by \equref{bim1.1} is also right $A$-linear, and this shows that $(XM,\mu M)$ is also a coequalizer in
$\Cc_A$. Similar arguments hold in the case where $X\in {}_A\Cc_A$.
\end{proof}

It follows from \prref{bim1} that we have a unique isomorphism $\Upsilon:\ X\ota (AM)\to XM$ such that
$\Upsilon\circ q=\mu M$, and $\Upsilon^{-1}=q\circ X\eta M$. Otherwise stated, there is a unique
isomorphism of coequalizers $(X\ota (AM),q)\cong (XM, \mu M)$. Now coequalizers are defined only up to
isomorphisms, so we can go one step further, and declare $(X\ota (AM),q)= (XM, \mu M)$. This identification
will also be useful at the level of morphisms. Before we explain this, we state the following Lemma, which is
a well-known and basic fact.

\begin{lemma}\lelabel{bim1a}
For $M\in \Cc$ and $Y\in {}_A\Cc$, we have an isomorphism $\alpha: {}_A\Cc(AM,Y)\to \Cc(M,Y)$, given by
the formulas
$$\alpha(\un{f})=\un{f}\circ \eta N~~~;~~~\alpha^{-1}(f)=\nu_Y\circ Af.$$
\end{lemma}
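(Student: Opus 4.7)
The plan is to verify this directly as the standard free-forgetful adjunction in the context of left $A$-modules, checking that $\alpha$ and the proposed $\alpha^{-1}$ are mutually inverse bijections. There are three things to check: (i) $\alpha^{-1}(f) = \nu_Y \circ Af$ is actually left $A$-linear; (ii) $\alpha \circ \alpha^{-1} = \mathrm{Id}$; and (iii) $\alpha^{-1} \circ \alpha = \mathrm{Id}$. The map $\alpha$ itself is obviously well-defined since $\eta M: M \to AM$ is a morphism in $\Cc$ and one merely post-composes with $\un{f}$.

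For (i), I would compute $\nu_Y \circ A(\alpha^{-1}(f)) = \nu_Y \circ A\nu_Y \circ AAf$ and compare with $\alpha^{-1}(f) \circ mM = \nu_Y \circ Af \circ mM$. Using bifunctoriality of $\ot$ one has $Af \circ mM = mY \circ AAf$, and then the left module associativity axiom $\nu_Y \circ mY = \nu_Y \circ A\nu_Y$ finishes this step. Diagrammatically, this is just the observation that $f$ slides past $m$ and the two ways of acting $AA$ on $Y$ agree.

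For (ii), given $f: M \to Y$ in $\Cc$, we get $\alpha(\alpha^{-1}(f)) = \nu_Y \circ Af \circ \eta M = \nu_Y \circ \eta Y \circ f = f$, where the first equality uses bifunctoriality and the second uses the left unit axiom $\nu_Y \circ \eta Y = Y$. For (iii), given $\un{f}: AM \to Y$ in ${}_A\Cc$, we compute
\[
\alpha^{-1}(\alpha(\un{f})) = \nu_Y \circ A\un{f} \circ A\eta M = \un{f} \circ mM \circ A\eta M = \un{f} \circ (m \circ A\eta) M = \un{f},
\]
where the second equality uses left $A$-linearity of $\un{f}$ (that is, $\nu_Y \circ A\un{f} = \un{f} \circ mM$ since the left action on $AM$ is $mM$) and the last equality uses the right unit axiom $m \circ A\eta = A$ of the algebra $A$.

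There is no real obstacle here; the statement is a categorical version of the standard hom-set isomorphism for the free-forgetful adjunction $A \ot - \dashv U$ between $\Cc$ and ${}_A\Cc$. The only minor point requiring care is (i), where one has to use bifunctoriality of $\ot$ to commute $f$ past the multiplication before applying the module associativity axiom; everything else reduces immediately to the unit axioms.
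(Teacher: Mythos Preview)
Your proof is correct; the three verifications (left $A$-linearity of $\alpha^{-1}(f)$, and the two compositions being identities) are exactly what is needed, and your use of bifunctoriality together with the module associativity and unit axioms is accurate. The paper itself gives no proof of this lemma, introducing it only as ``a well-known and basic fact'', so there is nothing further to compare.
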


Now take $f:\ X\to X'$ in $\Cc_A$, $M,M'\in \Cc$ and $\un{g}:\ AM\to AM'$ in ${}_A\Cc$. Let $g=\alpha(\un{g}):
M\to AM'$. Making the identification $X\ota (AM)=XM$ and $X'\ota (AM')=X'M'$, we have $f\ota \un{g}:\
XM\to X'M'$. According to \equref{tensor}, $f\ota \un{g}$ is determined by the commutativity of the diagram
$$\xymatrix{
XAM\ar[d]_{f\un{g}}\ar[rr]^{\mu M} && XM\ar[d]^{f\ota \un{g}}\\
X'AM'\ar[rr]^{\mu M'} &&X'M'}$$
It follows from \equref{bim1.1} that
\begin{equation}\eqlabel{bim1a.1}
f\ota \un{g}=\mu M' \circ f\un{g}\circ X\eta M=\mu M' \circ fg.
\end{equation}

\begin{definition}\delabel{bim2}
Let $A$ be an algebra in $\Cc$. $Y\in {}_A\Cc$ is called robust as a left $A$-module if
$$
\xymatrix{
MXAY\ar@<-.5ex>[rr]_(.54){M\mu Y} 
\ar@<.5ex>[rr]^(.55){MX\nu}&&
MXY\ar[r]^(.41){Mq } &M(X\ota Y) 
}
$$
is a coequalizer in $\Cc$, for all $M\in \Cc$ and $X\in \Cc_A$.
\end{definition}

This definition can be restated as follows: the universal property of coequalizers implies the existence
of a unique $\theta:\ (MX)\ota Y\to M(X\ota Y)$ such that $\theta\circ q=Mq$. $Y$ is robust if and only if $\theta$ is an isomorphism
for all $X$ and $M$.

\begin{proposition}\prlabel{bim3}
For all $N\in \Cc$, $AN\in {}_A\Cc$ is robust.
\end{proposition}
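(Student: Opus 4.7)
The plan is to reduce the claim directly to \prref{bim1} via an appropriate relabelling. Fix $M\in \Cc$ and $X\in \Cc_A$. Recall that $AN$ carries the canonical left $A$-action $\nu=mN$, and that $MX$ inherits a right $A$-action $\mu_{MX}=M\mu_X:MXA\to MX$. I would first invoke \prref{bim1} in order to make the identification $(X\ota (AN),q)=(XN,\mu_X N)$ that is installed just before the statement; this turns the diagram of \deref{bim2} for $Y=AN$ into
\[
MXAAN \xrightarrow{\ M\mu_X AN,\ MXmN\ } MXAN \xrightarrow{\ M\mu_X N\ } MXN.
\]

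Next, I would apply \prref{bim1} a second time, but now with the roles of $X$ and $M$ in its statement played by $MX\in \Cc_A$ and $N\in \Cc$ respectively. Since the induced right $A$-action on $MX$ is $M\mu_X$, the coequalizer produced by this second application is precisely the diagram displayed above. Therefore that diagram is a coequalizer in $\Cc$, and $AN$ is robust as a left $A$-module.

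There is no real obstacle in this argument; its whole content is the observation that the robustness condition for $Y=AN$ is nothing more than a second instance of \prref{bim1}. Equivalently, under the standing identifications the canonical comparison map $\theta:(MX)\ota (AN)\to M(X\ota (AN))$ discussed just after \deref{bim2} becomes the identity $\Id_{MXN}$, which is trivially an isomorphism. The only point that needs to be verified carefully is the bookkeeping that the two pairs of parallel arrows produced by the two invocations of \prref{bim1} do coincide, and this follows at once from the equality $\mu_{MX}=M\mu_X$ defining the right $A$-action on $MX$.
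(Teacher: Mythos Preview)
Your proof is correct and follows essentially the same approach as the paper's: both reduce the robustness diagram for $Y=AN$ to an instance of \prref{bim1} with the right $A$-module $MX$ (carrying the action $M\mu_X$) in place of $X$ and with $N$ in place of $M$. Your version is just more explicit about the identification $(X\ota AN,q)=(XN,\mu_X N)$ and the bookkeeping of the parallel arrows, whereas the paper compresses the whole argument into one sentence.
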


\begin{proof}
Take the coequalizer from \prref{bim1}, with $X$ replaced by $NX$. This is precisely the coequalizer in
\deref{bim2}.
\end{proof}

\begin{proposition}\prlabel{bim4}
\begin{enumerate}
\item Let $X\in \Cc_A$ and $Y\in {}_A\Cc_A$. If $A$ is left $A$-coflat, then $X\ota Y\in \Cc_A$,
and $(X\ota Y,q)$ is also a coequalizer in $\Cc_A$.
\item
Let $X\in {}_A\Cc_A$ and $Y\in {}_A\Cc_A$. If $Y$ is robust as a left $A$-module, then $X\ota Y\in{}_A\Cc$,
and $(X\ota Y,q)$ is also a coequalizer in ${}_A\Cc$.
\item 
If both $X$ and $Y$ are $A$-bimodules, $A$ is left coflat and $Y$ is left $A$-robust, then $X\ota Y\in
{}_A\Cc_A$, and $(X\ota Y,q)$ is also a coequalizer in ${}_A\Cc_A$.
\end{enumerate}
\end{proposition}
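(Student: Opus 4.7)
The three parts follow a common pattern: use the relevant hypothesis (coflatness of $A$ in (1), robustness of $Y$ in (2), both in (3)) to make a tensored-up form of the coequalizer $(X\ota Y,q)$ into a coequalizer in $\Cc$, invoke its universal property to construct the missing $A$-action on $X\ota Y$, and then derive the action axioms and the enhanced coequalizer property by precomposition with the resulting epimorphisms.

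For (1), left coflatness of $A$ guarantees that both $qA:XYA\to (X\ota Y)A$ and $qAA:XYAA\to (X\ota Y)AA$ are coequalizers in $\Cc$. A short diagram chase using the coequalizer identity $q\circ\mu_X Y=q\circ X\nu_Y$ together with the bimodule axiom $\mu_Y\circ\nu_Y A=\nu_Y\circ A\mu_Y$ for $Y$ shows that $q\circ X\mu_Y:XYA\to X\ota Y$ equalizes the pair $\mu_X YA,\,X\nu_Y A:XAYA\rightrightarrows XYA$, producing a unique $\mu:(X\ota Y)A\to X\ota Y$ with $\mu\circ qA=q\circ X\mu_Y$; by construction $q$ is then right $A$-linear. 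The unit and associativity axioms for $\mu$ reduce, by precomposition with the epimorphism $qA$ (resp.\ $qAA$), to the corresponding axioms for $\mu_Y$. Finally, given a right $A$-linear $f:XY\to P$ equalizing the parallel pair, the unique $g:X\ota Y\to P$ with $g\circ q=f$ provided by the coequalizer in $\Cc$ is itself right $A$-linear, since $\mu_P\circ gA$ and $g\circ\mu$ agree after precomposition with the epi $qA$.

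For (2), the same strategy applies with the two sides interchanged: robustness of $Y$, invoked with $M:=A$ and with $M:=AA$, makes $Aq$ and $AAq$ into coequalizers in $\Cc$. A parallel computation, now using the bimodule axiom on $X$, shows that $q\circ\nu_X Y$ equalizes $A\mu_X Y,\,AX\nu_Y:AXAY\rightrightarrows AXY$, yielding a left action $\nu:A(X\ota Y)\to X\ota Y$ satisfying $\nu\circ Aq=q\circ\nu_X Y$; the remaining verifications are verbatim as in (1). For (3), combine (1) and (2); the only additional point is the bimodule compatibility $\nu\circ A\mu=\mu\circ\nu A$ on $X\ota Y$, which is checked after precomposition with the epimorphism $AqA$—a coequalizer by coflatness of $A$ applied to the coequalizer $Aq$ produced by robustness of $Y$—whereupon both sides reduce to a common morphism $AXYA\to X\ota Y$ built from $\nu_X$ and $\mu_Y$. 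The coequalizer property in ${}_A\Cc_A$ then follows by combining the universal-property arguments of (1) and (2). The main obstacle is bookkeeping rather than anything deep: keeping straight which of the two hypotheses is invoked to turn which tensored-up form of $q$ into a coequalizer, and observing that robustness must be applied twice in (2)—once with $M=A$ to construct $\nu$ and once with $M=AA$ to establish its associativity.
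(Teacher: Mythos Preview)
Your proof is correct and follows essentially the same approach as the paper: construct the actions via the universal property of the tensored-up coequalizers (using coflatness of $A$ for $qA$, robustness of $Y$ for $Aq$), then verify the axioms and the enhanced coequalizer property by precomposition with the resulting epimorphisms. One small slip: the unit axiom for $\mu$ in part~(1) is checked by precomposing with $q$ (not $qA$), since both sides are morphisms out of $X\ota Y$ rather than $(X\ota Y)A$; otherwise your bookkeeping matches the paper's.
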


\begin{proof}
(1) Consider the diagram
\begin{equation}\eqlabel{bim4.1}
\xymatrix{
XAYA\ar[d]^{XA\mu}\ar@<-.5ex>[rr]_(.54){\mu YA} 
\ar@<.5ex>[rr]^(.55){X\nu A}&&
XYA\ar[d]^{X\mu }\ar[r]^(.42){qA} &(X\ota Y)A\ar@{.>}[d]^{\exists ! \mu}\\
XAY\ar@<-.5ex>[rr]_(.54){\mu Y} 
\ar@<.5ex>[rr]^(.55){X\nu}&&
XY\ar[r]^{q} &X\ota Y}
\end{equation}
The top row is a coequalizer since $A$ is left coflat, and this implies the
existence of $\mu$.  $\mu$ satisfies the unit property.
The diagram
$$\xymatrix{
XY\ar[d]_{XY\eta}\ar[rr]^{q}&&X\ota Y\ar[d]^{(X\ota Y)\eta}\\
XYA\ar[d]_{X\mu}\ar[rr]^{qA}&&(X\ota Y)A\ar[d]^{\mu}\\
XY\ar[rr]^{q}&&X\ota Y}$$
commutes. $X\mu\circ XY\eta=XY$, the identity, and it follows from the uniqueness
in the universal property of the coequalizer that $\mu\circ (X\ota Y)\eta=X\ota Y$.
We omit the proof of the associativity of $\mu$, since it is similar to the proof of
the compatibility of $\mu$ and $\nu$ in the third part of the proof.\\
Let $f:\ XY\to P$ be a morphism in $\Cc_A$ such that $f\circ \mu Y=f\circ X\nu$.
We know that there exists a unique $g:\ X\ota Y\to P$ such that $g\circ f=q$.
In follows that $(X\ota A,q)$ is a coequalizer in $\Cc_A$ if we can show that
$g$ is right $A$-linear. We first compute that
$$g\circ \mu \circ qA\equal{\equref{bim4.1}}g\circ q\circ X\mu= f\circ X\mu\equal{(*)}
\mu\circ fA=\mu\circ gA\circ qA.$$
At $(*)$, we used the fact that $f$ is right $A$-linear. $((X\ota Y)A,qA)$ is a coequalizer
since $A$ is left coflat, ant it follows that $g\circ \mu=\mu\circ gA$, which is precisely
what we need.\\

(2) If $Y$ is left $A$-robust, then the top row in the diagram
\begin{equation}\eqlabel{bim4.2}
\xymatrix{
AXAY\ar[d]^{\nu AY}\ar@<-.5ex>[rr]_(.54){A\mu Y} 
\ar@<.5ex>[rr]^(.55){AX\nu }&&
AXY\ar[d]^{\nu Y }\ar[r]^(.41){Aq} &A(X\ota Y)\ar@{.>}[d]^{\exists ! \nu}\\
XAY\ar@<-.5ex>[rr]_(.54){\mu Y} 
\ar@<.5ex>[rr]^(.55){X\nu}&&
XY\ar[r]^{q} &X\ota Y}
\end{equation}
is a coequalizer, and the universal property brings the left action $\nu$ on
$X\ota Y$. The rest of the proof of part (2) is left to the reader.\\

(3) Now we assume that both $X$ and $Y$ are bimodules. We show that the actions
$\mu$ and $\nu$ on $X\ota Y$ are compatible. To this end, consider the cubic diagram
$$\xymatrix{
AXYA\ar[rr]^{AqA}\ar[dd]^{\nu YA}\ar[dr]^{AX\mu}&&A(X\ota Y)A\ar'[d][dd]^{\nu A}
\ar[dr]^{A\mu}&\\
&AXY\ar[dd]^(.7){\nu Y}\ar[rr]^(.35){Aq}
&&A(X\ota Y)\ar[dd]^{\nu}\\
XYA\ar'[r][rr]^(.3){qA}\ar[dr]^{X\mu}&&(X\ota Y)A\ar[dr]^{\mu}&\\
&XY\ar[rr]^{q}
&&X\ota Y}$$
Commutativity of the top and bottom faces follows from the definition of $\mu$, and commutativity
of the front and back faces follows from the definition of $\nu$. It is obvious that the left face commutes.
From this we deduce that
$$
\mu\circ \nu A\circ AqA=\mu\circ qA\circ \nu YA=q\circ X\mu \circ \nu Y A
= q\circ \nu Y\circ AX\mu=\nu\circ Aq\circ AX\mu=\nu\circ A\mu\circ AqA.
$$
From the robustness of $Y$, we know that $(A(X\ota Y),Aq)$ is a coequalizer, and from
the left coflatness of $A$ that $(A(X\ota Y)A,AqA)$ is a coequalizer. It then follows that
$\mu\circ \nu A= \nu\circ A\mu$, which is the compatibility that we need. The proof of the 
associativity of $\mu$ and $\nu$ follows by similar arguments.
\end{proof}

Let $A$ be a left coflat algebra in $\Cc$, and let ${}_A^{~ !}\Cc_A^{\vbox{}}$ be the full subcategory of
${}_A\Cc_A$ consisting of bimodules that are left coflat as objects in $\Cc$, and robust as left
$A$-modules. Our aim is to show that ${}_A^{~ !}\Cc_A^{\vbox{}}$ is a monoidal category, with tensor product 
$\ot_A$ and unit object $A$.

\begin{lemma}\lelabel{bim5}
Let $A$ be left coflat. If $X,Y\in {}_A^{~ !}\Cc_A^{\vbox{}}$, then $X\ota Y$ is left coflat.
\end{lemma}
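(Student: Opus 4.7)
The goal is to show that the endofunctor $-(X\ot_A Y)$ of $\Cc$ preserves coequalizers. Fix a coequalizer $U\rightrightarrows V\stackrel{p}{\to} W$ in $\Cc$; I must show that $U(X\ot_AY)\rightrightarrows V(X\ot_AY)\stackrel{p(X\ot_AY)}{\longrightarrow} W(X\ot_AY)$ is a coequalizer in $\Cc$. Since $X$, $Y$, and $A$ are all left coflat, the objects $XY$ and $XAY$ are left coflat as well, as the composition of coequalizer-preserving endofunctors preserves coequalizers. Consequently the sequences
\begin{equation*}
UXY\rightrightarrows VXY\stackrel{pXY}{\longrightarrow}WXY \qquad\text{and}\qquad UXAY\rightrightarrows VXAY\stackrel{pXAY}{\longrightarrow}WXAY
\end{equation*}
are coequalizers in $\Cc$.

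Meanwhile, robustness of $Y$ as a left $A$-module (\deref{bim2}) says that for every $M\in\Cc$, the diagram
\begin{equation*}
MXAY \rightrightarrows MXY\stackrel{Mq}{\longrightarrow} M(X\ot_A Y)
\end{equation*}
is a coequalizer in $\Cc$, where the parallel pair is $M\mu_X Y$ and $MX\nu_Y$ and $q:XY\to X\ot_AY$ is the defining coequalizer map. Naturality in $M$ packages the three instances $M\in\{U,V,W\}$ into a commutative $3\times 3$ diagram whose rows are these robustness coequalizers and whose first two columns are the coequalizers from the previous paragraph. What remains is to show that the third column is a coequalizer.

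This is an instance of the general principle that coequalizers commute with coequalizers. Concretely: given $f:V(X\ot_AY)\to P$ coequalizing the two maps from $U(X\ot_AY)$, I set $\bar f = f\circ Vq$. Naturality of $Mq$ in $M$ implies that $\bar f$ coequalizes $UXY\rightrightarrows VXY$, so the middle column produces a unique $\bar g:WXY\to P$ with $\bar g\circ pXY=\bar f$. Naturality of the parallel pair $M\mu_X Y, MX\nu_Y$ in $M$, combined with the fact that $pXAY$ is epic (as part of the leftmost column), then forces $\bar g$ to coequalize $WXAY\rightrightarrows WXY$. Applying robustness of $Y$ at $M=W$ yields a unique $g:W(X\ot_AY)\to P$ with $g\circ Wq=\bar g$; the identity $g\circ p(X\ot_AY)=f$ and the uniqueness of $g$ then follow from the fact that $Vq$ is epic.

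The conceptual content is simply that colimits commute with colimits. The one potential obstacle is the bookkeeping: three distinct parallel pairs appear (the $A$-actions on $X$ and $Y$, and the pair $U\rightrightarrows V$ coming from the given coequalizer) and several naturality squares must be matched up. Left coflatness of $A$ is used precisely to ensure that $XAY$ is left coflat, which is what lets the leftmost column genuinely be a coequalizer in $\Cc$.
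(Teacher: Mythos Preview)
Your proof is correct and takes essentially the same approach as the paper's: both set up the $3\times 3$ diagram whose rows and columns are the robustness coequalizers and the left-coflatness coequalizers (you have transposed rows and columns relative to the paper, which is immaterial), and then run the standard diagram chase showing that coequalizers commute with coequalizers. The one minor imprecision is that uniqueness of $g$ requires not just that $Vq$ is epic but that the composite $Wq\circ pXY$ is epic; this is immediate, however, since both factors are coequalizer maps.
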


\begin{proof}
It is easy to show that the tensor product (in $\Cc$) of two left coflat objects is left coflat.
Let $(P,h)$ be the coequalizer of two parallel morphisms $f,g:\ M\to N$ in $\Cc$. We have to show
that $(P(X\ota Y),h(X\ota Y))$ is the coequalizer of $f(X\ota Y),g(X\ota Y)$. Take
$r:\ N(X\ota Y)\to R$ such that $r\circ f(X\ota Y)=r\circ g(X\ota Y)$ and
consider the diagram
$$\xymatrix{
MXAY \ar@<.5ex>[d]\ar@<-.5ex>[d] 
\ar@<.5ex>[rr]\ar@<-.5ex>[rr]&&
NXAY \ar@<.5ex>[d]\ar@<-.5ex>[d] 
\ar[rr]^{hXAY}&&
PXAY \ar@<.5ex>[d]\ar@<-.5ex>[d]\\
MXY\ar[d]^{Mq}\ar@<.5ex>[rr]\ar@<-.5ex>[rr]&&
NXY\ar[d]^{Nq}\ar[rr]^{hXY}&&
PXY\ar[d]^{Pq}\ar@{.>}@/^3pc/[dd]^{\exists  s}\\
M(X\ota Y)\ar@<.5ex>[rr]\ar@<-.5ex>[rr]&&
N(X\ota Y)\ar[rr]^{h(X\ota Y)}\ar[rrd]_r&&
P(X\ota Y)\ar@{.>}[d]_{\exists  t}\\
&&&&R}$$
The first two rows are coequalizers since $XY$ and $XAY$ are left coflat, and the three columns are
coequalizers since $Y$ is left $A$-robust. The rectangles in the diagram commute, and we easily compute
that
$$r\circ Nq\circ fXY=r\circ f(X\ota Y)\circ Mq =r\circ g(X\ota Y)\circ Mq=r\circ Nq\circ gXY,$$
and it follows that there exists  $s:\ PXY\to R$ such that $s\circ hXY=r\circ Nq$.
Then we compute that
$$
s\circ P\nu Y\circ hXAY=s\circ hXY\circ N\nu Y=r\circ Nq\circ N\nu Y
=r\circ Nq\circ NX\mu=s\circ PX\mu\circ hXAY,$$
hence $s\circ P\nu Y=s\circ PX\mu$, since $hXAY$ is an epimorphism. This implies the existence of 
 $t:\ P(X\ota Y)\to R$ such that $t\circ Pq=s$. It is now easy to see that
$$t\circ h(X\ota Y)\circ Nq=t\circ Pq\circ hXY= s\circ hXY=r\circ Nq,$$
hence  $t\circ h(X\ota Y)=r$, as needed. The uniqueness can be easily obtained as follows: if 
$t':\ P(X\ota Y)\to R$ is such that $t'\circ h(X\ota Y)=r$,
then $t'\circ Pq\circ hXY=r\circ Nq=t\circ Pq\circ hXY$, and $t'=t$.
\end{proof}

Recall that coequalizers are colimits, see for example \cite[III.3]{mclane}; in particular, the tensor product $X\ota Y$ of $X\in \Cc_A$ and
$Y\in {}_A\Cc$ is a colimit: consider the category $J$ with two objects labelled $xay$ and $xy$,
and two non-identity arrows $my,~xn:\ xay\to xy$, and let $F:\ J\to \Cc$ be the following functor:
$$
F(xay)=XAY,~~~F(xy)=XY,~~~F(my)=\mu Y,~~~F(xn)=X\nu.
$$
Cones from $F$ to the vertex $P$ in $\Cc$ correspond to morphisms $f:\ XY\to P$ such that $f\circ \mu Y=f\circ X\nu$,
and the colimit $\Colim F=(X\ota Y,q)$ consists of an object $X\ota Y\in \Cc$ and a universal cone $q$ from $F$ to
$X\ota Y$.\\
We now generalize this construction. Let $J_2$ be the category with four objects
$xayaz$, $xyaz$, $xayz$ and $xyz$, and morphisms
$$\xymatrix{xayaz
\ar@<.5ex>[d]_{xamz~}\ar@<-.5ex>[d]^{~xayn}
\ar@<.5ex>[rr]^{myaz}\ar@<-.5ex>[rr]_{xnaz}&&
xyaz
\ar@<.5ex>[d]_{xmz~}\ar@<-.5ex>[d]^{~xyn}\\
xayz
\ar@<.5ex>[rr]^{myz}\ar@<-.5ex>[rr]_{xnz}&&
xyz}$$
and their compositions, subject to the relations
$$\begin{array}{ccc}
xmz\circ myaz=myz\circ xamz=mmz&;&
xyn\circ xmaz=myz\circ xayn=myn;\\
xmz\circ xnaz=xnz\circ xamz= xmnz&;&
xyn\circ xnaz=xnz\circ xayn=xnn.
\end{array}$$
Consider $X\in \Cc_A$, $Y\in {}_A\Cc_A$ and $Z\in {}_A\Cc$.
$F_2:\ J_2\to \Cc$ is defined in the following way:
$F_2(xayaz)=XAYAZ$, $F_2(xyaz)=XYAZ$, $\cdots$, $F_2(xmaz)=X\mu AZ$ etc.
We can also consider the full subcategory $J'_2$ of $J_2$, with objects $xyaz$, $xayz$ and $xyz$,
and the restriction $F'_2$ of $F_2$ to $J'_2$. It is easy to establish that cones from $F_2$ to $P\in  \Cc$
correspond bijectively to cones from $F'_2$ to $P$, so that $F_2$ and $F'_2$ have the same colimit. We now define
$$\Colim F'_2=(X\ota Y\ota Z, q_2).$$

\begin{proposition}\prlabel{bim6}
Let $A$ be a coflat algebra in $\Cc$, and consider $X\in \Cc_A$ and $Y,Z\in {}_A^{~ !}\Cc_A^{\vbox{}}$. Then we have isomorphisms
of cones
$$(X\ota Y\ota Z, q_2)\cong (X\bullet (Y\bullet Z), q\circ Xq)
\cong ((X\ota Y)\ota Z,q\circ qZ).$$
If $X$ is an $A$-bimodule, then
$F_2$ and $F'_2$ corestrict to functors with values in ${}_A\Cc_A$, and the above cones are also the colimits of
these corestrictions. 
\end{proposition}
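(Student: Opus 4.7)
The plan is to verify that both $(X\ota(Y\ota Z), q\circ Xq)$ and $((X\ota Y)\ota Z, q\circ qZ)$ satisfy the universal property defining $\Colim F'_2 = (X\ota Y\ota Z, q_2)$; uniqueness of colimits then produces the two asserted isomorphisms of cones. The arguments for the two candidates run in parallel as two-stage iterated coequalizer computations, the asymmetry of the hypotheses only changing which one-variable coequalizer is invoked at the first stage. For $(X\ota(Y\ota Z), q\circ Xq)$, robustness of $Z$ (\deref{bim2}, with $M=X$ and $X'=Y$) identifies $Xq:\ XYZ\to X(Y\ota Z)$ as the coequalizer of $X\mu Z$ and $XY\nu$; for $((X\ota Y)\ota Z, q\circ qZ)$, left coflatness of $Z$ (built into $Z\in{}_A^{~ !}\Cc_A^{\vbox{}}$) ensures that $-Z$ preserves the coequalizer $q:\ XY\to X\ota Y$, so $qZ:\ XYZ\to (X\ota Y)Z$ is the coequalizer of $\mu YZ$ and $X\nu Z$.

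First I would verify the cone property. On the $X\ota(Y\ota Z)$ side, the equation coming from the $XYAZ$-pair follows directly from the coequalizer property of $Xq$, and the equation from the $XAYZ$-pair from the identities $Xq\circ\mu YZ=\mu(Y\ota Z)\circ XAq$ and $Xq\circ X\nu Z=X\nu_{Y\ota Z}\circ XAq$ (using the left action $\nu_{Y\ota Z}$ on $Y\ota Z$ supplied by \prref{bim4}(2)), combined with the coequalizer $q:\ X(Y\ota Z)\to X\ota(Y\ota Z)$ equalizing $\mu(Y\ota Z)$ and $X\nu_{Y\ota Z}$ by construction. On the $(X\ota Y)\ota Z$ side the argument is dual: the $XAYZ$-equation is immediate from $qZ$ being a coequalizer, and the $XYAZ$-equation from $qZ\circ X\mu Z=\mu Z\circ qAZ$ and $qZ\circ XY\nu=(X\ota Y)\nu\circ qAZ$ (using $\mu\circ qA=q\circ X\mu$ from \prref{bim4}(1)), combined with the coequalizer $q:\ (X\ota Y)Z\to (X\ota Y)\ota Z$ equalizing $\mu Z$ and $(X\ota Y)\nu$.

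Next I would verify the universal property, by two sequential factorizations in each case. Given a competing cone $f:\ XYZ\to P$, one of its cone equations factors $f$ through the outer coequalizer --- $Xq$ for the first candidate, $qZ$ for the second --- as some $\tilde f$; the other cone equation, after precomposition with the auxiliary morphism $XAq$ (resp.\ $qAZ$), forces $\tilde f$ to coequalize the pair defining $X\ota(Y\ota Z)$ (resp.\ $(X\ota Y)\ota Z$), whence a final factorization through the coequalizer $q$ yields the required unique $g$. The main intricacy lies in justifying that each auxiliary morphism is epi (in fact itself a coequalizer): $XAq$ by robustness of $Z$ with $M=XA$ and $X'=Y$; $qAZ$ because the left coflatness of both $A$ and $Z$ allows $-\otimes AZ$ to preserve the coequalizer $q$.

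Finally, when $X\in{}_A\Cc_A$, the corestriction to ${}_A\Cc_A$ is immediate from \prref{bim4}(3): all iterated tensor products live in ${}_A\Cc_A$, every cone and induced comparison morphism is automatically $A$-bilinear, and the coequalizers above remain coequalizers in ${}_A\Cc_A$.
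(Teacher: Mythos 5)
Your proposal is correct and follows essentially the same route as the paper: identify each bracketing as a two-stage iterated coequalizer, using robustness of $Z$ to make $Xq$ and $XAq$ coequalizers for the bracketing $X\ota(Y\ota Z)$ and left coflatness of $Z$ (and of $AZ$) for the bracketing $(X\ota Y)\ota Z$, then factor a competing cone in two steps and transfer the remaining coequalizing relation along the epimorphism $XAq$ (resp. $qAZ$). The only differences are cosmetic: you also verify the cone property explicitly and write out the second bracketing, which the paper dispatches with ``similar arguments.''
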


\begin{proof}
We will show that $(X\bullet (Y\bullet Z), q\circ Xq)$ satisfies the universal property of cones.
Assume that $f:\ XYZ\to P$ in $\Cc$ is such that
\begin{equation}\eqlabel{bim6}
f\circ \mu YZ=f\circ X\nu Z~~{\rm and}~~f\circ x\mu Z=f\circ XY\nu.
\end{equation}
Consider the diagram
$$\xymatrix{
XAYAZ
\ar@<.5ex>[d]\ar@<-.5ex>[d] 
\ar@<.5ex>[rr]\ar@<-.5ex>[rr]&&
XAYAZ
\ar@<.5ex>[d]_{\mu YZ~}\ar@<-.5ex>[d]^{~X\nu Z}
\ar[rr]^{XAq}&&
XA(Y\ota Z)
\ar@<.5ex>[d]_{\mu(Y\ota Z)~}\ar@<-.5ex>[d]^{~X\nu}\\
XYAZ\ar@<.5ex>[rr]\ar@<-.5ex>[rr]&&
XYZ\ar[d]_f\ar[rr]^{Xq}&&
X(Y\ota Z)\ar[d]_q\ar@{.>}[dll]_{\exists f_1}\\
&&P&&X\ota(Y\ota Z)\ar@{.>}[ll]_{\exists f_2}}$$
The two top rows are coequalizers since $Z$ is robust as a left $A$-module. The second equation in
\equref{bim6} implies the existence of $f_1:\ X (Y\ota Z)\to P$ such that $f_1\circ Xq=f$.\\
The two squares in the top right corner of the diagram commute. The commutativity of the one on the left
is obvious, and the commutativity of the one on the right 
is a consequence of the definition of the left action $\nu$ on $Y\ota Z$, see \equref{bim4.2}. 
We now easily find that
$$f_1\circ X\nu \circ XAq=f_1\circ Xq\circ X\nu Z=f\circ X\nu Z
\equal{\equref{bim6}} f\circ \mu YZ=f_1\circ Xq\circ \mu YZ=f_1\circ \mu(Y\ota Z)\circ XAq,$$
hence $f_1\circ X\nu= f_1\circ \mu(Y\ota Z)$, so there exists $f_2:\ X\bullet (Y\bullet Z)\to P$
such that $f_2\circ q=f_1$, and $f_2\circ q\circ Xq=f$, as needed. The uniqueness of $f_2$ follows
from the fact that $q$ and $Xq$ are epimorphisms. If $f$ is a morphism in ${}_A\Cc_A$, then
it follows from \prref{bim4} that $f_1$ and $f_2$ are also in ${}_A\Cc_A$, and this shows that
$(X\bullet (Y\bullet Z), q\circ Xq)$ is the colimit of the corestriction of $F'_2$ to ${}_A\Cc_A$.\\
Similar arguments show that $((X\ota Y)\ota Z,q\circ qZ)$ is a colimit of $F'_2$.
\end{proof}

Take $X,Y,Z\in {}_A^{~ !}\Cc_A^{\vbox{}}$.
It follows from the universal property of colimits that there exists a unique isomorphism
$$\alpha=\alpha_{X,Y,Z}:\ X\ota (Y\ota Z)\to (X\ota Y)\ota Z$$
in ${}_A\Cc_A$
such that the diagram
\begin{equation}\eqlabel{bim7.1}
\xymatrix{
XYZ\ar[drr]_{qZ}\ar[rr]^{Xq}&& X (Y\ota Z)\ar[rr]^q&& X\ota (Y\ota Z)\ar[d]^{\alpha}\\
&&(X\ota Y) Z \ar[rr]^q&& (X\ota Y)\ota Z}
\end{equation}
commutes. The diagram
\begin{equation}\eqlabel{bim7.2}
\xymatrix{
XYZ\ar[drr]_q\ar[rr]^{Xq}&&X(Y\ota Z)\ar[d]^{\theta^{-1}}\ar[rr]^q&&X\ota(Y\ota Z)\ar[d]^{\alpha}\\
&&(XY)\ota Z\ar[rr]^{q\ota Z}&&(X\ota Y)\ota Z}
\end{equation}
commutes. Indeed, the commutativity of the pentangle follows from \equref{bim7.1} combined with
\equref{tensor}; the triangle commutes: this is the definition of $\theta$. Then we compute that
$$\alpha\circ q\circ \theta\circ q= \alpha\circ q\circ Xq=q\ota Z\circ q,$$
hence $\alpha\circ q\circ \theta=Z\circ q$, so the rectangle commutes.

\begin{proposition}\prlabel{bim7}
If $A$ is left coflat, then ${}_A^{~ !}\Cc_A^{\vbox{}}$ is closed under the tensor product over $A$.
\end{proposition}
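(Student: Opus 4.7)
The statement requires showing that for $X, Y \in \ACA$, the object $X \ota Y$ again lies in $\ACA$, i.e.\ that it is (i) an $A$-bimodule, (ii) left coflat in $\Cc$, and (iii) robust as a left $A$-module. Points (i) and (ii) are already handled: (i) is \prref{bim4}(3), which applies because $A$ is left coflat and $Y$ is left $A$-robust; (ii) is \leref{bim5}. So the only content of the proof is to verify robustness (iii).

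For (iii), fix $M \in \Cc$ and $W \in \Cc_A$; one must exhibit the canonical comparison morphism $\theta: (MW) \ota (X \ota Y) \to M(W \ota (X \ota Y))$ (characterised by $\theta \circ q = Mq$) as an isomorphism. The plan is to build $\theta$ as a composite of four known isomorphisms. Regarding $MW$ as an object of $\Cc_A$ via the right $A$-action on $W$, we invoke \prref{bim6} twice to obtain the associativity isomorphisms
\[ \alpha_1 : (MW) \ota (X \ota Y) \to ((MW) \ota X) \ota Y, \qquad \alpha_2 : (W \ota X) \ota Y \to W \ota (X \ota Y). \]
Let $\theta_X : (MW) \ota X \to M(W \ota X)$ and $\theta_Y : (M(W \ota X)) \ota Y \to M((W \ota X) \ota Y)$ denote the comparison maps associated with robustness of $X$ and $Y$; these are isomorphisms by the hypothesis $X, Y \in \ACA$. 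Then the composite
\[ (MW) \ota (X \ota Y) \xrightarrow{\alpha_1} ((MW) \ota X) \ota Y \xrightarrow{\theta_X \ota Y} (M(W \ota X)) \ota Y \xrightarrow{\theta_Y} M((W \ota X) \ota Y) \xrightarrow{M\alpha_2} M(W \ota (X \ota Y)) \]
is an isomorphism in $\Cc$, since each arrow is. It remains only to identify this composite with the canonical $\theta$.

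That identification is a diagram chase: each ingredient in the chain satisfies a defining equation of the form ``$(\cdot) \circ q = (\cdot)q$'' with respect to the appropriate coequalizer map, and the compatibility of $\alpha_1$ and $\alpha_2$ with the universal cones is precisely \equref{bim7.1}. Precomposing the displayed composite with $q \circ MWq : MWXY \to (MW) \ota (X \ota Y)$ and working from left to right, successive applications of these defining equations (together with the fact that $Mq$ and $q$ are epimorphisms, so identities between maps out of the colimits may be tested after precomposition) reduce the claim to the tautology $Mq \circ MWq = Mq \circ MWq$ on $MWXY$. The main subtlety is bookkeeping rather than any genuine obstacle: one must keep straight the two instances of $\alpha$ and the fact that $MW$ is used as a right $A$-module (through $W$), and make sure that the right action used to form $((MW)\ota X)\ota Y$ matches the one induced on $MW$; once this is written out the chase is essentially automatic. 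No further hypothesis on $M$ or $W$ is required, so robustness holds for all such test data.
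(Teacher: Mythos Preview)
Your proof is correct and matches the paper's approach essentially verbatim: the paper builds the same composite, written there as $\Theta = M\alpha^{-1}\circ\theta\circ(\theta\ota Z)\circ\alpha$ (in its variables $Y,Z,M,X$ corresponding to your $X,Y,M,W$), and verifies $\Theta\circ q\circ MXq = Mq\circ MXq$ by chasing the diagram \equref{bim7.3}, then cancels. One small point of care in your write-up: the epimorphism you actually need at the last cancellation step is $MWq$, and this is epi by robustness of your $Y$ (applied with test data $MW\in\Cc$ and $X\in\Cc_A$); avoid phrasing the justification as ``$Mq$ is epi'' in a way that would presuppose robustness of $X\ota Y$, which is what you are proving.
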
 

\begin{proof}
Take $Y,Z\in {}_A^{~ !}\Cc_A^{\vbox{}}$. We know from \leref{bim5} that $Y\ota Z$ is left coflat.
We are left to show that $Y\ota Z$ is robust as a left $A$-module. Take $M\in \Cc$ and $X\in \Cc_A$
and consider the diagram
\begin{equation}\eqlabel{bim7.3}
\xymatrix{
&&MX(Y\ota Z)\ar[d]^{\theta^{-1}}\ar[rr]^q
&&(MX)\ota(Y\ota Z)\ar[d]^{\alpha}\\
&&(MXY)\ota Z\ar[rrd]^{(Mq)\ota Z}\ar[rr]^{q\ota Z}
&&((MX)\ota Y)\ota Z\ar[d]^{\theta\ota Z}\\
MXYZ
\ar[uurr]^{MXq}\ar[urr]^q\ar[rr]^{MqZ}\ar[drr]^{Mq}\ar[ddrr]^{MXq}
&&M(X\ota Y)Z\ar[rr]^q\ar[drr]^{Mq}
&&(M(X\ota Y))Z\ar[d]^{\theta}\\
&&M((XY)\ota Z)\ar[d]^{M\theta}\ar[rr]^{M(q\ota Z)}
&&M((X\ota Y)\ota Z)\ar[d]^{M\alpha^{-1}}\\
&&MX(Y\ota Z)\ar[rr]^{Mq}
&&M(X\ota (Y\ota Z))}
\end{equation}
Commutativity of the top and bottom triangles and rectangles follows from \equref{bim7.2}.
The commutativity of the two remaining triangles follows from the definition of $\theta$,
and the commutativity of the two remaining quadrangles follows from \equref{tensor}.
We conclude that the whole diagram commutes. Now let
$$\Theta= M\alpha^{-1}\circ \theta\circ \theta\ota Z\circ \alpha:\ (MX)\ota(Y\ota Z)\to M(X\ota (Y\ota Z)).$$
$\Theta$ is an isomorphism, and $\Theta\circ q\circ MXq=Mq\circ MXq$, so that $\Theta\circ q=Mq$.
It follows from the (reformulation of) \deref{bim2} that $Y\ota Z$ is robust as a left $A$-module.
\end{proof}

\begin{theorem}\thlabel{bim8}
Let $A$ be a left coflat algebra in $\Cc$. Then we have a monoidal category
$({}_A^{~ !}\Cc_A^{\vbox{}}, \ot_A=\ota, A, \alpha,\lambda,\rho)$. The category $\Cc_A$ is a right
$\ACA$-category.
\end{theorem}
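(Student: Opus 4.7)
The plan is to verify all the data and axioms of a monoidal category using the universal property of coequalizers, together with the results already established in Propositions~\ref{pr:bim1}, \ref{pr:bim3}, \ref{pr:bim4}, \ref{pr:bim6}, \ref{pr:bim7}. First, the tensor product $\ota$ is a bifunctor on $\ACA$: closure under $\ota$ is \prref{bim7}, and functoriality on morphisms is guaranteed by the uniqueness of the induced map in \equref{tensor}. The unit $A$ lies in $\ACA$ since $A$ is left coflat by hypothesis and $A=A\un{1}$ is robust as a left $A$-module by \prref{bim3}. The unit constraints are obtained by specializing the identification $(X\ota(AM),q)=(XM,\mu M)$ of \prref{bim1} to $M=\un{1}$: we set $\rho_X:X\ota A\to X$ to be the unique morphism with $\rho_X\circ q=\mu$, and dually $\lambda_Y:A\ota Y\to Y$ with $\lambda_Y\circ q=\nu$; both are $A$-bilinear and are natural in the obvious way.

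The associativity constraint $\alpha_{X,Y,Z}$ was already constructed in \equref{bim7.1} out of the universal property of the triple colimit $(X\ota Y\ota Z,q_2)=\Colim F'_2$, which by \prref{bim6} is realized both as $((X\ota Y)\ota Z,q\circ qZ)$ and as $(X\ota(Y\ota Z),q\circ Xq)$. Naturality of $\alpha$ in each argument is a routine diagram chase, obtained by postcomposing the naturality square with the (epimorphic) triple coequalizer map $q_2$. For the pentagon axiom, I would extend \prref{bim6} by one more step, producing a four-fold colimit $W\ota X\ota Y\ota Z$ that (by iterating the same robustness and coflatness arguments) is realized simultaneously by the five ways of bracketing $W,X,Y,Z$; the two composites appearing in the pentagon then coincide with the canonical comparison map between two of these bracketings, hence are equal by the uniqueness in the universal property. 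The triangle axiom $\rho_X\ota Y=(X\ota\lambda_Y)\circ\alpha_{X,A,Y}$ is verified by precomposing with the epimorphism $q\circ Xq:XAY\to X\ota A\ota Y$ and unwinding: both sides then reduce to $\mu Y=X\nu$ coequalized through $q$.

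For the second statement, the same machinery gives the right $\ACA$-action on $\Cc_A$: \prref{bim4}(1) shows $X\ota Y\in\Cc_A$ for $X\in\Cc_A$ and $Y\in\ACA$, which makes $\ota:\Cc_A\times\ACA\to\Cc_A$ a bifunctor; the associativity isomorphism $\Psi_{X,Y,Z}:(X\ota Y)\ota Z\to X\ota(Y\ota Z)$ and the right unit $\mathbf r_X:X\ota A\to X$ are defined exactly as above, and the pentagon and triangle axioms for a module category follow by the same universal-property argument, since the triple and quadruple colimits $X\ota Y\ota Z$ and $X\ota Y\ota Z\ota W$ continue to make sense when only the leftmost factor is merely a right $A$-module. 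The main obstacle is the pentagon axiom: one must check that the fourfold colimit really exists and is invariant under all five bracketings, which is where the hypotheses of left coflatness of $A$ and robustness of the middle factors are used to push the coequalizing maps through the outer tensor factors, exactly as in the proof of \prref{bim7}.
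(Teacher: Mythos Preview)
Your proposal is correct and follows essentially the same route as the paper: closure under $\ota$ from \prref{bim7}, unit constraints from \prref{bim1}, the associator from \prref{bim6}, the triangle axiom checked by precomposing with the epimorphism $q\circ Xq$ and reducing to $q\circ\mu Y=q\circ X\nu$, and the pentagon handled by introducing a fourfold colimit (the paper calls the indexing category $J'_3$) realized by all five bracketings. One small slip: your triangle identity should read $(\rho_X\ota Y)\circ\alpha_{X,A,Y}=X\ota\lambda_Y$, not $\rho_X\ota Y=(X\ota\lambda_Y)\circ\alpha_{X,A,Y}$, since $\alpha_{X,A,Y}$ goes from $X\ota(A\ota Y)$ to $(X\ota A)\ota Y$.
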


\begin{proof}
We have shown in \prref{bim7} that the tensor product over $A$ of two objects in ${}_A^{~ !}\Cc_A^{\vbox{}}$
is again in ${}_A^{~ !}\Cc_A^{\vbox{}}$. The associativity constraint $\alpha$ was defined as an application
of \prref{bim6}.\\
The unit constraint follows as an application of \prref{bim1}.
 $(X,\mu)$ and $(X\ota A,q)$ are both coequalizers in $\Cc$ (and in ${}_A\Cc_A$) of $Xm,\mu A:\ XAA\to XA$,
 so there exists a unique isomorphism $\rho_X:\ X\ota A\to X$
in ${}_A\Cc_A$ such that $\rho_X\circ q=\mu$, with inverse $\rho^{-1}_X=q\circ X\eta$. In a similar way, we have
a unique isomorphism $\lambda_X:\ A\ota X\to X$ in ${}_A\Cc_A$ such that $\lambda_X\circ q=\nu$, with inverse 
$\lambda^{-1}_X=q\circ \eta X$. We are left to show that the coherence conditions are satisfied.\\
Take
$X,Y,Z,T\in {}_A^{~ !}\Cc_A^{\vbox{}}$. We have to show that the following diagrams commute.
\begin{equation}\eqlabel{bim7.4}
\xymatrix{
X\ota (A\ota Z)\ar[rr]^{\alpha_{X,A,Z}}\ar[dr]_{X\ota \lambda_Z}
&& (X\ota A)\ota Z\ar[dl]^{\rho_X\ota Z}\\
&X\ota Z&}
\end{equation}
\begin{equation}\eqlabel{bim7.5}
\xymatrix{
&(X\ota Y)\ota (Z\ota T)\ar[dr]^{\alpha_{X\ota Y,Z,T}}&\\
X\ota (Y\ota (Z\ota T))\ar[ur]^{\alpha_{X,Y,Z\ota T}}\ar[d]_{X\ota \alpha_{Y,Z,T}}&&
((X\ota Y)\ota Z)\ota T\\
X\ota((Y\ota Z)\ota T)\ar[rr]^{\alpha_{X,Y\ota Z,T}}&&(X\ota (Y\ota Z))\ota T\ar[u]_{\alpha_{X,Y,Z}}\ota T}
\end{equation}
$\alpha_{X,A,Z}$ is the unique morphism that makes the diagram \equref{bim7.1} commutative. If we can
show that $(\rho_X\ota Z)^{-1}\circ X\ota \lambda_Z$ has the same property, then it follows that
\equref{bim7.4} commutes. This means that we have to show that the diagram
$$
\xymatrix{
XAZ\ar[d]_{qZ}\ar[r]^{Xq}&X(A\ota Z)\ar[r]^q&X\ota (A\ota Y)\ar[d]^{X\lambda_Y}\\
(X\ota A)Z\ar[r]^q&(X\ota A)\ota Z\ar[r]^{\rho_X\ota Y}& X\ota Y}$$
commutes. This is an easy computation:
$$X\ota \lambda_Y\circ q\circ Xq\equal{\equref{tensor}}q\circ X\lambda_Y\circ Xq=q\circ X\nu
=q\circ \mu Y= q\circ \rho_XY\circ qY\equal{\equref{tensor}}\rho_X\ota Y\circ q\circ qZ.$$
Now consider the category $J'_3$, consisting of four objects and six morphisms that are not identities:
$$\xymatrix{
&&xyazt
\ar@<.5ex>[d]_{xmzt~}\ar@<-.5ex>[d]^{~xynt}&&\\
xayzt\ar@<.5ex>[rr]^{myzt}\ar@<-.5ex>[rr]_{xnzt}&&
xyzt
&&xyzat
\ar@<.5ex>[ll]^{xyzn}\ar@<-.5ex>[ll]_{xymt}}$$
We define $F'_3:\ J'_3\to \Cc$ in the obvious way: $F'_3(xyazt)=XYAZT$, $F'_3(xmzt)=X\mu ZT$, etc.
The fourfold tensor product is defined as the colimit of $F'_3$:
$\Colim F'_3= (X\ota Y\ota Z\ota T,q_3)$. Proceeding as in \prref{bim6}, we can show that
$\bigl(X\ota (Y\ota (Z\ota T)),q\circ Xq\circ XYq\bigr)$,
$\bigl((X\ota Y)\ota (Z\ota T), q\circ qq\bigr)$,
$\bigl(((X\ota Y)\ota Z)\ota T, q\circ qT\circ qZT\bigr)$,
$\bigl(X\ota((Y\ota Z)\ota T),q\circ Xq\circ XqT\bigr)$ and
$\bigl((X\ota (Y\ota Z))\ota T,q\circ qT\circ XqT\bigr)$
are all colimits of $F'_3$ (and of the corestriction of $F'_3$ to ${}_A\Cc_A$). This means that these five cones
are isomorphic. For example, the isomorphism between the first two cones is the unique morphism that makes
the diagram
$$\xymatrix{
XYZT\ar[rr]^{XYq}\ar[drr]_{XYq}&&
XY(Z\ota T)\ar[d]^=\ar[rr]^{Xq}&&
X(Y\ota(Z\ota T))\ar[rr]^q&&
X\ota(Y\ota(Z\ota T))\ar@{.>}[d]^{\exists !}\\
&&XY(Z\ota T)\ar[rr]^{q(Z\ota T)}&&
(X\ota Y)(Z\ota T)\ar[rr]^q&&
(X\ota Y)\ota(Z\ota T)}$$
commutative. Here we used the equality $qq=q(Z\ota T)\circ XYq$. In view of \equref{bim7.1},
this morphism is $\alpha_{X,Y,Z\ota T}$. In a similar way, we can prove that the maps in the diagram \equref{bim7.5}
establish isomorphisms between the 5 coequalizers above. Therefore the two compositions in the diagram
also establish isomorphisms between coequalizers, hence they are equal, since these isomorphisms
are unique. This tells us that \equref{bim7.5} commutes.
\end{proof}

A coalgebra $C$ in $\ACA$ is called an $A$-coring. The category of right $C$-comodules 
and right $C$-colinear morphisms in $\Cc_A$ is denoted by $\Cc^C$. 

\section{Entwined modules over cowreaths}\selabel{2}
\setcounter{equation}{0}
A (strict) monoidal category $\Cc$ can be viewed as a 2-category with a single 0-cell, hence
we can consider the 2-categories ${\rm Mnd}(\Cc)$ \cite{street} and ${\rm EM}(\Cc)$ \cite{LackRoss}.
These have the same 0-cells and 1-cells, but are different at the level of 2-cells. The 0-cells are
algebras (or monads) in $\Cc$. Fix an algebra $A$ in $\Cc$ and consider the endomorphism categories
$$
\Tc(\Cc)_A=\Tc_A={\rm Mnd}(\Cc)(A,A)~~{\rm and}~~\Tc(\Cc)_A^\#=\Tc_A^\#={\rm EM}(\Cc)(A,A).
$$
The notation $\Tc(\Cc)_A$ is taken from \cite{tambara}, where $\Tc(\Cc)_A$ appears in a different
context, and where it is called the category of right transfer morphisms through $A$. 
$\Tc_A$ and $\Tc_A^\#$ are (strict) monoidal categories. A monad in ${\rm Mnd}(\Cc)$ 
is called a distributive law \cite[Sec. 6]{street} or a factorization structure. A comonad in ${\rm Mnd}(\Cc)$
is called a mixed distributive law or an entwining structure \cite{brmaj}. A monad in ${\rm EM}(\Cc)$
is called a wreath in $\Cc$ \cite{LackRoss}, an alternative name suggested in \cite{LackRoss}
is generalized distributive law. A comonad in ${\rm EM}(\Cc)$
was called in \cite{bc4} a cowreath in $\Cc$, or a mixed wreath in \cite{StreetMW}; we can also refer to it as 
a generalized entwining structure. 

A cowreath in $\Cc$ consists of  an algebra $A$ in $\Cc$ and 
a coalgebra in $\Tc_A^\#$. In a similar way, a wreath in $\Cc$ consists of 
an algebra $A$ in $\Cc$ and an algebra in $\Tc_A^\#$. For later use, we spell out the 
explicit definition of a cowreath.

\subsection{The monoidal categories $\Tc_A$ and $\Tc_A^\#$}\selabel{2.1}
Let $A$ be an algebra in $\Cc$. A (right) transfer morphism through $A$ is a pair
$X=(X,\psi)$, with  $X\in \Cc$ and $\psi:\ X A\ra A X$  in $\Cc$ such that 
$\psi\circ Xm=Xm\circ A\psi\circ \psi A$ and $\psi\circ X\eta=\eta X$; in diagrammatic notation
\begin{equation}\eqlabel{ta}
\psi=
{\footnotesize{
\gbeg{2}{3}
\got{1}{X}\got{1}{A} \gnl
\gbrc \gnl
\gob{1}{A}\gob{1}{X}
\gend}}\hspace*{5mm}{\rm satisfies}\hspace*{5mm}
(a)~~
{{\footnotesize
\gbeg{3}{5}
\got{1}{X}\got{1}{A}\got{1}{A}\gnl
\gbrc\gcl{1}\gnl
\gcl{1}\gbrc\gnl
\gmu\gcl{1}\gnl
\gob{2}{A}\gob{1}{X}
\gend}} =
{{\footnotesize
\gbeg{3}{5}
\got{1}{X}\got{1}{A}\got{1}{A}\gnl
\gcl{2}\gmu\gnl
\gvac{1}\gcn{1}{1}{2}{1}\gnl
\gbrc\gnl
\gob{1}{A}\gob{1}{X}
\gend}}
\hspace*{5mm}{\rm and}\hspace*{5mm}(b)~~
{{\footnotesize
\gbeg{2}{4}
\got{1}{X}\gnl
\gcl{1}\gu{1}\gnl
\gbrc\gnl
\gob{1}{A}\gob{2}{X}
\gend}}
=
{{\footnotesize
\gbeg{2}{3}
\got{3}{X}\gnl
\gu{1}\gcl{1}\gnl
\gob{1}{A}\gob{1}{X}
\gend}}.
\end{equation}
The categories $\Tc_A$ and $\Tc_A^\#$ coincide at the level of objects; their objects
are right transfer morphisms through $A$. 
A morphism $X\to Y$ in $\Tc_A$ is a morphism $f:\ X\to Y$ in $\Cc$ such that
$\psi \circ fA=A f\circ \psi$. 
A morphism $X\to Y$ in $\Tc_A^\#$ is a morphism $f:\ X\to A Y$ in $\Cc$ such that
\begin{equation}\eqlabel{2.1.0}
mY \circ Af \circ \psi=mY\circ A \psi\circ fA.
\end{equation}

The composition of two morphisms $f:\ X\to Y$ 
and $g:\ Y\to Z$ in $\Tc_A^\#$ is
$g\bullet f= mZ \circ Ag \circ f$. The identity on $(X, \psi)$ is $\eta X$.
The tensor product of $X$ and $Y$ is 
$XY=(X Y, \psi_X\cdot \psi_Y=\psi_X Y\circ X\psi_Y)$. The tensor product of
$f:\ X\to X'$ and $g:\ Y\to Y'$ in $\Tc_A^\#$ is given by the composition 
$mXY\circ A\psi Y\circ fg$. 
The unit object is $(\un{1}, A)$. $\Tc_A$ and $\Tc_A^\#$ are strict monoidal categories,
and we have a strong monoidal functor 
$F:\ \Tc_A\to \Tc_A^\#$, which is the identity on objects, and $F(f)=\eta f$, for
$f:\ X\to Y$ in $\Tc_A$. If a morphism in $\Tc_A^\#$ is of the form $\eta f$,
with $f:\ X\to Y$ in $\Cc$, then $f$ is a morphism in $\Tc_A$.\\

In a similar way we introduce left transfer morphisms through $A$, consisting of pairs $X=(X, \varphi)$, with
$\varphi=\footnotesize{
\gbeg{2}{3}
\got{1}{A}\got{1}{X}\gnl
\gbrcb\gnl
\gob{1}{X}\gob{1}{A}
\gend}:\ A X\to X A$. 
We leave it to the reader to write down the precise definition of 
the categories ${}_A{\cal T}$ and ${}_A^\#{\cal T}$. In fact
${}_A\Tc={\rm Mnd}(\Cc^{\rm op})(A,A)~~{\rm and}~~{}_A^\#\Tc={\rm EM}(\Cc^{\rm op})(A,A)$.

The tensor product in ${}_A{\cal T}$ and ${}_A^\#{\cal T}$ is given by the formula
$XY=(X Y, \varphi_X\cdot \varphi_Y=X \varphi_Y\circ \varphi_X Y)$.\\

\subsection{Cowreaths}\selabel{2.2}
A cowreath (mixed wreath or generalized entwining structure) in $\Cc$ is a triple $(A, X, \psi)$, where $A$ is an algebra in $\Cc$,
and $(X, \psi)$ is a coalgebra in ${\cal T}_A^{\#}$, which is  an
object $(X, \psi)\in \Tc_A^\#$ together with morphisms 
$$
\delta=\footnotesize{
\gbeg{3}{5}
\got{3}{X}\gnl
\gvac{1}\gcl{1}\gnl
\gsbox{3}\gnl
\gcl{1}\gcl{1}\gcl{1}\gnl
\gob{1}{A}\gob{1}{X}\gob{1}{X}
\gend}~: X\ra AXX,\hspace*{1cm}
\epsilon=\footnotesize{
\gbeg{1}{3}
\got{1}{X}\gnl
\gmp{\epsilon}\gnl
\gob{1}{A}
\gend}~~:X\ra A$$
in $\Cc$ such that the following relations hold:
\begin{equation}\eqlabel{pdf}
{\rm (a)}~~{\footnotesize 
\gbeg{4}{8}
\got{3}{X}\got{1}{A}\gnl
\gvac{1}\gcl{1}\gvac{1}\gcl{2}\gnl
\gsbox{3}\gnl
\gcl{1}\gcl{1}\gcl{1}\gcl{1}\gnl
\gcl{1}\gcl{1}\gbrc\gnl
\gcl{1}\gbrc\gcl{1}\gnl
\gmu\gcl{1}\gcl{1}\gnl
\gob{2}{A}\gob{1}{X}\gob{1}{X}
\gend} 
=
{\footnotesize 
\gbeg{4}{7}
\got{1}{X}\got{1}{A}\gnl
\gbrc\gnl
\gcl{1}\gcn{1}{1}{1}{3}\gnl
\gcl{1}\gsbox{3}\gnl
\gcl{1}\gcl{1}\gcl{1}\gcl{1}\gnl
\gmu\gcl{1}\gcl{1}\gnl
\gob{2}{A}\gob{1}{X}\gob{1}{X}
\gend}
~;\hspace*{1cm}{\rm (b)}~~
{\footnotesize
\gbeg{5}{8}
\got{5}{X}\gnl
\gvac{2}\gcl{1}\gnl
\gvac{1}\gsbox{3}\gnl
\gcn{1}{1}{3}{1}\gvac{1}\gcl{1}\gcn{1}{1}{1}{3}\gnl
\gcl{1}\gsbox{3}\gvac{3}\gcl{1}\gnl
\gcl{1}\gcl{1}\gcl{1}\gcl{1}\gcl{1}\gnl
\gmu\gcl{1}\gcl{1}\gcl{1}\gnl
\gob{2}{A}\gob{1}{X}\gob{1}{X}\gob{1}{X}
\gend} 
=
{\footnotesize 
\gbeg{5}{9}
\got{3}{X}\gnl
\gvac{1}\gcl{1}\gnl
\gsbox{3}\gnl
\gcl{1}\gcl{1}\gcn{1}{1}{1}{3}\gnl
\gcl{1}\gcl{1}\gsbox{3}\gnl
\gcl{1}\gcl{1}\gcl{1}\gcl{1}\gcl{1}\gnl
\gcl{1}\gbrc\gcl{1}\gcl{1}\gnl
\gmu\gcl{1}\gcl{1}\gcl{1}\gnl
\gob{2}{A}\gob{1}{X}\gob{1}{X}\gob{1}{X}
\gend}
\end{equation}
$${\rm (c)}~~{\footnotesize
\gbeg{2}{5}
\got{1}{X}\got{1}{A}\gnl
\gbrc\gnl
\gcl{1}\gmp{\epsilon}\gnl
\gmu\gnl
\gob{2}{A}
\gend} 
= 
{\footnotesize
\gbeg{2}{4}
\got{1}{X}\got{1}{A}\gnl
\gmp{\epsilon}\gcl{1}\gnl
\gmu\gnl
\gob{2}{A}
\gend}
~;\hspace*{1cm}{\rm (d)}~~
{\footnotesize 
\gbeg{3}{6}
\got{3}{X}\gnl
\gvac{1}\gcl{1}\gnl
\gsbox{3}\gnl
\gcl{1}\gmp{\epsilon}\gcl{1}\gnl
\gmu\gcl{1}\gnl
\gob{2}{A}\gob{1}{X}
\gend} 
=
{\footnotesize
\gbeg{2}{3}
\got{3}{X}\gnl
\gu{1}\gcl{1}\gnl
\gob{1}{A}\gob{1}{X}
\gend}
~;\hspace*{1cm}{\rm (e)}~~
{\footnotesize 
\gbeg{3}{7}
\got{3}{X}\gnl
\gvac{1}\gcl{1}\gnl
\gsbox{3}\gnl
\gcl{1}\gcl{1}\gmp{\epsilon}\gnl
\gcl{1}\gbrc\gnl
\gmu\gcl{1}\gnl
\gob{2}{A}\gob{1}{X}
\gend} 
=
{\footnotesize  
\gbeg{2}{3}
\got{3}{X}\gnl
\gu{1}\gcl{1}\gnl
\gob{1}{A}\gob{1}{X}
\gend}
\hspace{2mm}.$$ 
Conditions (a) and (c) mean that $\delta$ and $\epsilon$ define morphisms $X\to XX$
and $X\to \un{1}$ in $\Tc_A^\#$. (b) is the coassociativity of the comultiplication $\delta$
and (d) and (e) are the left and right counit property.

\subsection{Entwined modules over cowreaths}\selabel{2.3}
Let $\Dc$ be a right $\Cc$-category, and let $A$ be an algebra in $\Cc$. Then
$\Dc_A$ is a right $\Tc_A$-category, see \cite[Prop. 4.3]{dbbt}. We will
now show that it is also a right $\Tc_A^\#$-category.

\begin{proposition}\prlabel{4.4}
Let $A$ be an algebra in a (strict) monoidal category $\Cc$, and let 
$\Dc$ be a (strict) right $\Cc$-category. Then $\Dc_A$ is a right $\Tc_A^\#$-category.
The tensor product of $N\in \Dc_A$ and $X\in \Tc_A^\#$ is given by the formula
$N \diamond X=(NX, \mu_{NX}=\mu X \circ N \psi)$.
The tensor product of 
$f:\ N\to M$ in $\Dc_A$ and $g:\ X\to Y$ in $\Tc_A^\#$
is given by the formula
$f\diamond g= \mu Y\circ fg$.
\end{proposition}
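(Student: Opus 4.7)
The plan is to verify, in turn, the four ingredients that make $\Dc_A$ into a strict right $\Tc_A^\#$-category: that $N\diamond X$ is an object of $\Dc_A$, that $f\diamond g$ is a morphism in $\Dc_A$, that $\diamond$ is functorial, and that the strict associativity and unit coincidences hold on the nose. Throughout I would work in the strict setting supplied by the preliminary Subsection \ref{se:1.1}, so no coherence isomorphisms need to be tracked.

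First I would check that $\mu_{NX}=\mu X\circ N\psi$ turns $NX$ into a right $A$-module in $\Dc$. The unit condition $\mu_{NX}\circ NX\eta=NX$ follows from axiom (b) of \equref{ta} (that is, $\psi\circ X\eta=\eta X$) combined with $\mu\circ N\eta=N$. The associativity $\mu_{NX}\circ \mu_{NX}A=\mu_{NX}\circ NXm$ is the standard distributive-law computation: insert $A\psi$ above $\psi A$ and invoke axiom (a) of \equref{ta} together with the associativity of $\mu$ on $N$.

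Next, for $f\colon N\to M$ in $\Dc_A$ and $g\colon X\to Y$ in $\Tc_A^\#$ (so $g$ is a morphism $X\to AY$ in $\Cc$ satisfying \equref{2.1.0}), the key computational point is that $f\diamond g=\mu Y\circ fg\colon NX\to MY$ is right $A$-linear. This is the main obstacle, but it reduces to a single application of \equref{2.1.0}: one pushes the trailing copy of $A$ through $g$ via $mY\circ Ag\circ \psi=mY\circ A\psi\circ gA$, and then absorbs the resulting two factors of $A$ on the target using the associativity of $\mu_M$ together with the $A$-linearity of $f$.

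For functoriality, the identity of $(X,\psi)$ in $\Tc_A^\#$ is $\eta X\colon X\to AX$, and $f\diamond \eta X=\mu X\circ f\eta X=(\mu\circ M\eta\circ f)X=fX$ by the unit axiom for the $A$-action on $M$; in particular ${\rm Id}_N\diamond \eta X={\rm Id}_{NX}$. For composition, given composable pairs $(f,f')$ in $\Dc_A$ and $(g,g')$ in $\Tc_A^\#$ with $g'\bullet g=mZ\circ Ag'\circ g$, one expands $(f'\diamond g')\circ(f\diamond g)=\mu Z\circ f'g'\circ \mu Y\circ fg$, commutes $f'$ past $\mu_M$ by its $A$-linearity, merges the two multiplications via the associativity of $\mu_P$, and recognises the result as $\mu Z\circ (f'\circ f)(mZ\circ Ag'\circ g)=(f'\circ f)\diamond (g'\bullet g)$.

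Finally, the strict-$\Cc$-category hypothesis on $\Dc$ makes associativity and unitality immediate: both $(N\diamond X)\diamond Y$ and $N\diamond (XY)$ have underlying object $NXY$, equipped with the common action $\mu XY\circ N\psi_X Y\circ NX\psi_Y$, which matches the formula $\psi_X\cdot \psi_Y=\psi_X Y\circ X\psi_Y$ for the monoidal product in $\Tc_A^\#$; and $N\diamond(\un{1},A)=N$ because $\psi_{\un{1}}={\rm Id}_A$ and $\mu\circ NA=\mu$. Hence no nontrivial coherence data remains to be checked.
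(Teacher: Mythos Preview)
Your proposal is correct: it carries out precisely the direct verification that the paper declares ``an easy exercise left to the reader,'' and each step (module axioms for $N\diamond X$, right $A$-linearity of $f\diamond g$ via \equref{2.1.0}, functoriality, and strict associativity/unit) goes through as you describe.

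The paper, however, supplements this by recording a more conceptual argument that you do not mention: one packages $\Cc$ and $\Dc$ into a bicategory $\cal{B}$ with two objects $0,1$, where ${\cal B}(1,1)=\Cc$, ${\cal B}(1,0)=\Dc$ and ${\cal B}(0,0)=1$; then $A$ is a monad on $1$, $\Tc_A^\#={\rm EM}({\cal B})((1,A),(1,A))$, $\Dc_A={\rm EM}({\cal B})((0,\un{1}),(1,A))$, and the $\diamond$-action is nothing but horizontal composition in ${\rm EM}({\cal B})$. This viewpoint makes all the axioms automatic (associativity and unit for a module category are inherited from associativity and unit of composition in a $2$-category), at the cost of invoking the Eilenberg--Moore completion machinery of Lack--Street. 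Your hands-on route has the advantage of being self-contained and of making explicit exactly which axioms of $(X,\psi)$ and of $g$ are used where; the bicategorical route explains \emph{why} no computation is really necessary.
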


\begin{proof}
It is an easy exercise left to the reader. A more conceptual proof is the following. As $\Cc$ acts on the right 
on $\Dc$, we can view both as forming a bicategory $\cal{B}$ with two objects, say $0$ and $1$, with ${\cal B}(1, 1)=\Cc$, 
${\cal B}(1, 0)=\Dc$ and ${\cal B}(0, 0)=1$. For $A$ an algebra in $\Cc$, $(1, A)$ can be regarded as an object of $EM(\cal{B})$, 
and we have that $EM({\cal C})(A, A)=EM(\cal{B})((1, A), (1, A))$ and 
$\Dc_A=EM(\cal{B})((0, \un{1}), (1, A))$. Thus the above action of $\Tc_A^\#$ on $\Dc_A$ is just the composition functor 
in $EM(\cal{B})$.  
\end{proof}

\prref{4.4} justifies the following definition.

\begin{definition}\delabel{4.5}
Let $(A, X, \psi)$ be a cowreath in $\Cc$. An entwined module in $\Dc$ over $(A, X, \psi)$  is a right $(X,\psi)$-comodule in $\Dc_A$.
\end{definition}

An entwined module over $(A, X, \psi)$ consists of an object $M\in \Dc_A$ and a morphism
$\rho:\ M\to M X$ in $\Dc_A$ satisfying
\begin{eqnarray}
&&\hspace*{-2cm}\rho X \circ \rho= \mu X X\circ M \delta\circ \rho;\eqlabel{c1}\\
&&\hspace*{-2cm}\mu\circ M \epsilon\circ \rho= M.\eqlabel{c2}
\end{eqnarray}
\equref{c1} is the coassociativity of the coaction, and \equref{c2} is the counit property.
The fact that $\rho$ is right $A$-linear is expressed by the formula
\begin{equation}\eqlabel{c3}
\rho\circ \mu=\mu X \circ M\psi \circ \rho A.
\end{equation}
A mixed distributive law (or entwining structure) $(A,X,\psi)$ can be considered as a cowreath (or generalized entwining structure): 
take $\delta=\eta\Delta$ and $\epsilon=\eta\varepsilon$. It is easy to see that entwined modules over  $(A, X, \psi)$ considered as
a mixed distributive law coincide with entwined modules over $(A, X, \psi)$ considered as
a monoidal cowreath.\\
A morphism between two entwined modules $M$ and $N$ is a right $A$-linear morphism
$f:\ M\to N$ such that $fX \circ \rho=\rho\circ f$. The category of entwined modules 
in $\Dc_A$ over $(X, \psi, \delta, \epsilon)$ will be denoted as $\Dc(\psi)_A^X$.

\section{Wreaths, wreath product algebras and duality}\selabel{3}
\setcounter{equation}{0}
\subsection{Duality between left and right transfer morphisms}\selabel{3.1}
\begin{theorem}\thlabel{roots}
Let $A$ be an algebra in a (strict) monoidal category $\Cc$.
Take $X\in {\cal T}_A$, and assume that $X\dashv Y$ in $\Cc$. Consider
\begin{equation}\eqlabel{psidual}
\ov{\psi}=\varphi=Y  A   d  \circ Y  \psi  Y \circ  b  A  Y=
{\footnotesize
\gbeg{4}{5}
\gvac{2}\got{1}{A}\got{1}{Y}\gnl
\gdb\gvac{0}\gcl{1}\gcl{1}\gnl
\gcl{1}\gbrc\gcl{1}\gnl
\gcl{1}\gcl{1}\gev\gnl
\gob{1}{Y}\gob{1}{A}
\gend}~:~A  Y\to Y  A.
\end{equation}
Then $(Y,\varphi)\in {}_A{\cal T}$.
If $\Cc$ has right duality, then we have strong monoidal functors
${}^*(-):\ {\cal T}_A^\#\ra {}_A^\#{\cal T}^{\rm oprev}$
and ${}^*(-):\ {\cal T}_A\ra {}_A{\cal T}^{\rm oprev}$.
\end{theorem}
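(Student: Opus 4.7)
The plan is to handle the two claims in turn. For $(Y,\varphi) \in {}_A\Tc$, I verify the unit and multiplication axioms by direct diagrammatic calculation. Substituting \equref{psidual} into $\varphi \circ \eta Y$ and using the interchange law, the inserted $\eta$ can be pushed inside to produce $\psi \circ X\eta$, which equals $\eta X$ by axiom (b) of \equref{ta}. A second application of interchange together with the zigzag identity $Yd \circ bY = Y$ then yields $Y\eta$, as required. The multiplication axiom is established similarly: expand both sides via \equref{psidual}, use interchange to bring $m$ past $b$, invoke axiom (a) of \equref{ta} to combine the two copies of $\psi$ appearing on the right-hand side into $\psi \circ Xm$, and eliminate the superfluous $b$--$d$ pairs by the zigzag identities. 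Both arguments are routine in the graphical calculus of \seref{1.1}.

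\medskip

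Both functors agree on objects: $(X,\psi) \mapsto ({}^*X, \varphi)$, where ${}^*X$ is a chosen right dual (available by the right-duality hypothesis) and $\varphi$ is given by \equref{psidual}; the first part of the theorem ensures this is well defined. For the plain functor $\Tc_A \to {}_A\Tc^{\rm oprev}$, a morphism $f : X \to Y$ in $\Tc_A$ is sent to ${}^*f : {}^*Y \to {}^*X$, and the compatibility $Af \circ \psi_X = \psi_Y \circ fA$ for $f$ dualises, via the zigzag identities, into the compatibility of ${}^*f$ with $\varphi_{{}^*X}$ and $\varphi_{{}^*Y}$ needed for ${}^*f$ to be a morphism in ${}_A\Tc$, and hence, reversing direction, in ${}_A\Tc^{\rm oprev}$. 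Functoriality is inherited from that of ${}^*(-) : \Cc \to \Cc^{\rm op}$. For the hash functor $\Tc_A^\# \to {}_A^\#\Tc^{\rm oprev}$, a morphism $f : X \to AY$ in $\Tc_A^\#$ is sent to its mate $\tilde{f} = {}^*X A d_Y \circ {}^*X f {}^*Y \circ b_X {}^*Y : {}^*Y \to {}^*X \cdot A$ under the adjunctions $X \dashv {}^*X$ and $Y \dashv {}^*Y$. Condition \equref{2.1.0} for $f$ then translates, again by the zigzag equations, into the analogous hash compatibility for $\tilde{f}$, and functoriality with respect to $\bullet$-composition follows from interchange and zigzag manipulations.

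\medskip

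Finally, strong monoidality is inherited from that of the strong monoidal functor ${}^*(-) : \Cc \to \Cc^{\rm oprev}$ described in \seref{1.1}. The unit constraint ${}^*\un{1} = \un{1}$ is trivially compatible with the trivial transfer morphism $A$ on $\un{1}$, and the tensor constraint $\varphi_2(X,X') : {}^*(XX') \to {}^*X' \cdot {}^*X$ is shown to intertwine $\varphi_{{}^*(XX')}$, built from $\psi_{XX'} = \psi_X X' \circ X\psi_{X'}$, with the concatenated transfer morphism $\varphi_{{}^*X'} \cdot \varphi_{{}^*X}$ on ${}^*X' \cdot {}^*X$; this reduces to a zigzag computation using the standard formulas $b_{XX'} = {}^*X' b_X X' \circ b_{X'}$ and $d_{XX'} = d_{X'} \circ X' d_X X'$ for the adjunction data of the composite $XX' \dashv {}^*X' \cdot {}^*X$. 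The main obstacle throughout is bookkeeping: none of the steps is conceptually deep, but each involves juggling several copies of $b$, $d$, $\eta$, $m$ and $\psi$, and the hash case further requires the mating operation to turn a morphism into $\Cc$ of the correct shape for ${}_A^\#\Tc$. Graphical calculus as in \seref{1.1} makes the verifications routine.
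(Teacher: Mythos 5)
Your proposal is correct and follows essentially the same route as the paper: direct graphical verification of the transfer-morphism axioms for $\varphi$ using (\ref{eq:ta}) and the zigzag identities \equref{evcoev}, the mate construction $g=YAd'\circ YfY'\circ bY'$ for morphisms in $\Tc_A^\#$ (the paper's \equref{roots.2}), and strong monoidality via the composite adjunction $XX'\dashv Y'Y$ showing $\ov{\psi\cdot\psi'}=\varphi'\cdot\varphi$ and transporting along $\varphi_2(X,X')$. The only ingredient the paper makes explicit that you leave implicit is the compatibility \equref{roots.1} of $\varphi$ with a change of right adjoint, which is what lets $\varphi_2(X,X')$ intertwine the two transfer morphisms; your zigzag computation would produce it anyway.
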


\begin{proof}
We first compute that 
\[
{\footnotesize
\gbeg{3}{5}
\got{1}{A}\got{1}{A}\got{1}{Y}\gnl
\gbrcb\gvac{2}\gcl{1}\gnl
\gcl{1}\gbrcb\gnl
\gmu\gcl{1}\gnl
\gob{2}{A}\gob{1}{Y}
\gend}
\equal{(\ref{eq:psidual})}
{\footnotesize
\gbeg{7}{6}
\gvac{2}\got{1}{A}\gvac{2}\got{1}{A}\got{1}{Y}\gnl
\gdb\gvac{0}\gcl{1}\gdb\gvac{0}\gcl{1}\gcl{1}\gnl
\gcl{1}\gbrc\gcl{1}\gbrc\gcl{1}\gnl
\gcl{1}\gcn{1}{1}{1}{3}\gev\gvac{0}\gcn{1}{1}{1}{-1}\gev\gnl
\gcl{1}\gvac{1}\gmu\gnl
\gob{1}{Y}\gvac{1}\gob{2}{A}
\gend}
\equal{\equref{evcoev}}
{\footnotesize
\gbeg{6}{6}
\gvac{2}\got{1}{A}\got{1}{A}\got{1}{Y}\gnl
\gdb\gvac{0}\gcl{1}\gcl{1}\gcl{1}\gnl
\gcl{1}\gbrc\gcl{1}\gcl{1}\gnl
\gcl{1}\gcl{1}\gbrc\gcl{1}\gnl
\gcl{1}\gmu\gev\gnl
\gob{1}{Y}\gob{2}{A}
\gend}
\equal{(\ref{eq:ta}.a)}
{\footnotesize
\gbeg{6}{6}
\gvac{2}\got{1}{A}\got{1}{A}\got{1}{Y}\gnl
\gdb\gvac{0}\gmu\gcl{1}\gnl
\gcl{1}\gcl{1}\gcn{1}{1}{2}{1}\gvac{1}\gcl{1}\gnl
\gcl{1}\gbrc\gvac{1}\gcn{1}{1}{1}{-1}\gnl
\gcl{1}\gcl{1}\gev\gnl
\gob{1}{Y}\gob{1}{A}
\gend}
\equal{(\ref{eq:psidual})}
{\footnotesize
\gbeg{3}{5}
\got{1}{A}\got{1}{A}\got{1}{Y}\gnl
\gmu\gcl{1}\gnl
\gcn{1}{1}{2}{3}\gvac{1}\gcl{1}\gnl
\gvac{1}\gbrcb\gnl
\gvac{1}\gob{1}{Y}\gob{1}{A}
\gend}~.
\]
It follows immediately from (\ref{eq:evcoev},~\ref{eq:ta}) that 
$\varphi \circo \eta  Y= Y  \eta$, hence $(Y,\varphi)\in {}_A{\cal T}$.\\
$\varphi$ is independent of the choice of the right adjoint $Y$ of $X$ in the following sense. If
$(X,Y,b',d')$ is another adjunction, leading to $\varphi':\ AY'\to Y'A$, then it follows from \equref{1.1.1}. that
\begin{equation}\eqlabel{roots.1}
\lambda A\circ \varphi=\varphi'\circ A\lambda,
\end{equation}
where $\lambda=Y'd\circ b'Y:\ Y\to Y'$. \\
Let $(X,\psi),~(X',\psi')\in \Tc_A$, $X\dashv Y$ and $X'\dashv Y'$. 
For $f:\ X\to X'$ in ${\cal T}_A^\#$, 
\begin{equation}\eqlabel{roots.2}
g=Y  A  d' \circo Y  f   Y' \circo b  Y':\ Y'\to Y  A
\end{equation}
is a morphism  $g:\ Y'\to Y$ in ${}_A^\#{\cal T}$. \\
Assuming that $\Cc$ has right duality, and fixing a right dual
${}^*\hspace*{-2pt}X$ for every $X\in \Cc$, we obtain a functor ${}^*(-):\ {\cal T}_A^\#\ra {}_A^\#{\cal T}^{\rm op}$,
putting ${}^*\hspace*{-1pt}(X,\psi)=({}^*\hspace*{-2pt}X,\varphi)$ and ${}^*f=g$. We leave it to the reader to verify
that ${}^*\hspace*{-1pt}(f'\bullet f)={}^*f\bullet {}^*f'$ and ${}^*\Id_{(X,\psi)}=\Id_{({}^*\hspace*{-2pt}X,\varphi)}$.\\
Let us finally show that ${}^*(-)$ is strong monoidal. It suffices to show that, for $X,X'\in \Tc_A$,
$\varphi_2(X,X'):\ {}^*\hspace*{-1pt}(XX')\to {}^*\hspace*{-2pt}X'{}^*\hspace*{-2pt}X$
defines an isomorphism 
$({}^*\hspace*{-1pt}(XX'), \ov{\psi\cdot \psi'})\to ({}^*\hspace*{-2pt}X'{}^*\hspace*{-2pt}X, \varphi'\cdot \varphi)$
in ${}_A\Tc$, and, a fortiori, in ${}_A^\#\Tc$. We have $X\dashv Y={}^*\hspace*{-2pt}X$, $X'\dashv Y'={}^*\hspace*{-2pt}X'$
and $XX'\dashv Y'Y$, and we claim that $\varphi'\cdot \varphi=\ov{\psi\cdot \psi'}$. To this end it suffices to observe that
the following diagram commutes.
$$\xymatrix{
AY'Y\ar[d]_{b'AY'Y}
\ar@/^3pc/[dddrrrrrr]^{\varphi'\cdot \varphi}
\ar@/_7pc/[dddrrrrrr]_(.40){\ov{\psi\cdot\psi'}}
&&&&&&\\
Y'X'AY'Y\ar[d]_{Y'bX'AY'Y}\ar[rr]^{Y'\psi'Y'Y}&&
Y'AX'Y'Y\ar[d]^{Y'bAX'Y'Y}&&&&\\
Y'YXX'AY'Y\ar[rr]^{Y'YX\psi' Y'Y}&&
Y'YXAX'Y'Y\ar[d]_{Y'Y\psi X'Y'Y}\ar[rr]^{Y'YXAd'Y}&&
Y'YXAY\ar[d]^{Y'Y\psi Y}&&\\
&&Y'YAXX'Y'Y\ar[rr]^{Y'YAXd'Y}&&
Y'YAXY\ar[rr]^{Y'YAd}
&&Y'YA}$$
Combining this formula with \equref{roots.1}, we find that the diagram
$$\xymatrix{
A{}^*\hspace*{-1pt}(XX')\ar[rr]^{\ov{\psi\cdot \psi'}}\ar[d]_{A\varphi_2(X,X')}&&
{}^*\hspace*{-1pt}(XX')A\ar[d]^{\varphi_2(X,X')A}\\
A{}^*\hspace*{-2pt}X'{}^*\hspace*{-2pt}X\ar[rr]^{\varphi'\cdot \varphi}&&{}^*\hspace*{-2pt}X'{}^*\hspace*{-2pt}XA}$$
commutes, which is precisely what we need.
\end{proof}

\subsection{Factorization structures}\selabel{3.2}
\begin{definition}\delabel{3.2}
Let $\Cc$ be a (strict) monoidal category. A left wreath (or left generalized factorization structure) in $\Cc$
is a triple $(A,X,\psi)$, where $A$ is an algebra in $\Cc$, and $(X,\psi)$ is an algebra in $\Tc_A^\#$.
A right wreath is a triple $(A,Y,\varphi)$, where $A$ is an algebra in $\Cc$ and $(Y,\varphi)$ is an algebra in ${}_A^\#\Tc$.
\end{definition}

Explicitly, a right wreath is a triple 
$(A,Y,\varphi)$, where $A$ is an algebra in $\Cc$, and $(Y,\varphi)\in  {}_A^\#\Tc$, together with morphisms
$$m_Y
={\footnotesize
\gbeg{2}{5}
\got{1}{Y}\got{1}{Y}\gnl
\gcl{1}\gcl{1}\gnl
\gsbox{2}\gnl
\gcl{1}\gcl{1}\gnl
\gob{1}{Y}\gob{1}{A}
\gend}: Y Y\ra Y A~~{\rm and}~~
{\eta_Y}
={\footnotesize
\gbeg{2}{5}
\got{2}{\un{1}}\gnl
\gnl
\gsbox{2}\gnl
\gcl{1}\gcl{1}\gnl
\gob{1}{Y}\gob{1}{A}
\gend}: \un{1}\ra Y A
$$
in $\Cc$ such that
\begin{eqnarray}
&&(a)~~\footnotesize{
\gbeg{3}{6}
\got{1}{A}\got{1}{Y}\got{1}{Y}\gnl
\gcl{1}\gcl{1}\gcl{1}\gnl
\gcl{1}\gsbox{2}\gnl
\gbrcb\gvac{2}\gcl{1}\gnl
\gcl{1}\gmu\gnl
\gob{1}{Y}\gob{2}{A}
\gend}
={\footnotesize
\gbeg{3}{6}
\got{1}{A}\got{1}{Y}\got{1}{Y}\gnl
\gbrcb\gvac{2}\gcl{1}\gnl
\gcl{1}\gbrcb\gnl
\gsbox{2}\gvac{2}\gcl{1}\gnl
\gcl{1}\gmu\gnl
\gob{1}{Y}\gob{2}{A}
\gend}
~~,~~(b)~~{\footnotesize 
\gbeg{3}{7}
\got{1}{Y}\got{1}{Y}\got{1}{Y}\gnl
\gcl{1}\gcl{1}\gcl{1}\gnl
\gsbox{2}\gvac{2}\gcl{1}\gnl
\gcl{1}\gbrcb\gnl
\gsbox{2}\gvac{2}\gcl{1}\gnl
\gcl{1}\gmu\gnl
\gob{1}{Y}\gob{2}{A}
\gend}
={\footnotesize 
\gbeg{3}{7}
\got{1}{Y}\got{1}{Y}\got{1}{Y}\gnl
\gcl{1}\gcl{1}\gcl{1}\gnl
\gcl{1}\gsbox{2}\gnl
\gcl{1}\gcl{1}\gcl{1}\gnl
\gsbox{2}\gvac{2}\gcl{1}\gnl
\gcl{1}\gmu\gnl
\gob{1}{Y}\gob{2}{A}
\gend}
~~,\nonumber\\
&&\eqlabel{rightwreath}\\
&&(c)~~{\footnotesize
\gbeg{3}{5}
\gvac{2}\got{1}{A}\gnl
\gvac{2}\gcl{1}\gnl
\gsbox{2}\gvac{2}\gcl{1}\gnl
\gcl{1}\gmu\gnl
\gob{1}{Y}\gob{2}{A}
\gend}
={\footnotesize
\gbeg{3}{6}
\got{1}{A}\gnl
\gcl{1}\gnl
\gcl{1}\gsbox{2}\gnl
\gbrcb\gvac{2}\gcl{1}\gnl
\gcl{1}\gmu\gnl
\gob{1}{Y}\gob{2}{A}
\gend}
~~,~~(d)~~{\footnotesize 
\gbeg{3}{7}
\gvac{2}\got{1}{Y}\gnl
\gvac{2}\gcl{1}\gnl
\gsbox{2}\gvac{2}\gcl{1}\gnl
\gcl{1}\gbrcb\gnl
\gsbox{2}\gvac{2}\gcl{1}\gnl
\gcl{1}\gmu\gnl
\gob{1}{Y}\gob{2}{A}
\gend}
={\footnotesize 
\gbeg{2}{3}
\got{1}{Y}\gnl
\gcl{1}\gu{1}\gnl
\gob{1}{Y}\gob{1}{A}
\gend}
~~,~~(e)~~{\footnotesize 
\gbeg{3}{7}
\got{1}{Y}\gnl
\gcl{1}\gnl
\gcl{1}\gsbox{2}\gnl
\gcl{1}\gcl{1}\gcl{1}\gnl
\gsbox{2}\gvac{2}\gcl{1}\gnl
\gcl{1}\gmu\gnl
\gob{1}{Y}\gob{2}{A}
\gend}
={\footnotesize 
\gbeg{2}{3}
\got{1}{Y}\gnl
\gcl{1}\gu{1}\gnl
\gob{1}{Y}\gob{1}{A}
\gend}~~.\nonumber
\end{eqnarray} 
$m_A$ and $\eta_A$ are the multiplication and unit of the algebra $(Y,\varphi)$. (a) and (c)
express the fact that $m_A:\ YY\to Y$ and $\eta_A:\ \un{1}\to Y$
are morphisms in ${}_A^\#\Tc$; (b) is the associativity and (d) and (e) are the unit conditions.\\

If $(A,Y,\varphi)$ is a right wreath, then $YA$ is an algebra in $\Cc$
with multiplication
$$
{m_{\#}=\footnotesize 
\gbeg{4}{6}
\got{1}{Y}\got{1}{A}\got{1}{Y}\got{1}{A}\gnl
\gcl{1}\gbrc\gcl{1}\gnl
\gsbox{2}\gsbox{2}\gvac{2}\gmu\gnl
\gcl{1}\gcl{1}\gcn{1}{1}{2}{1}\gnl
\gcl{1}\gmu\gnl
\gob{1}{Y}\gob{2}{A}
\gend}
$$
and unit $\eta_{\#}=\eta_Y:\ \un{1}\to YA$, see for example \cite{bc4}. In the literature, this algebra is called
the wreath product or generalized smash product, and is denoted as $Y \#_{\varphi} A$.\\

If $F:\ \Cc\to \Dc$ is strong monoidal, and $C$ is a coalgebra
in $\Cc$, then $F(C)$ is a coalgebra in $\Dc$ with comultiplication and counit given by the formulas
\begin{equation}\eqlabel{roots3}
\Delta_{F(C)}=\varphi_2^{-1}(C,C) \circo F(\Delta)~~;~~\varepsilon_{F(C)}=\varphi_0^{-1} \circo F(\varepsilon).
\end{equation}
Let $(A,X,\psi)$ be a cowreath, and assume that $X\dashv Y={}^*\hspace*{-2pt}X$ in $\Cc$. Then $(X,\psi)$ is a coalgebra in $\Tc_A^\#$, and $(Y,\varphi)$ is coalgebra in ${}^\#_A\Tc^{\rm oprev}$, by \thref{roots}, and therefore an algebra in 
${}^\#_A\Tc$, so that $(A,Y,\varphi)$ is a right wreath. We compute the multiplication and unit
using \equref{roots.2}, with $f:\ X\to X'$ in $\Tc_A^{\#}$ replaced by $\delta:\ X\to XX$ and $\epsilon:\ X\to \un{1}$. We find that
$$m_Y=YAd^2\circ Y\delta YY\circ bYY:\ YY\to YA~~{\rm and}~~\eta_Y=Y\epsilon\circ b.$$
This proves the first part of \prref{duality(co)wreaths}. 
The proof of the second part is similar and is left to the reader. Note also that a different proof can be given by using the 
techniques used in \cite{StreetMW}. 

\begin{proposition}\prlabel{duality(co)wreaths}
Let $\Cc$ be a (strict) monoidal category.

(i) If $(A,X, \psi)$ is a cowreath and $X\dashv Y$ in $\Cc$, then $(A,Y,\varphi)$, with $\varphi$ given by \equref{psidual},
is a right wreath, with multiplication $m_Y$ and unit $\eta_Y$ given by the formulas
\[
m_Y={\footnotesize
\gbeg{6}{7}
\gvac{4}\got{1}{Y}\got{1}{Y}\gnl
\gdb\gvac{2}\gcl{1}\gcl{1}\gnl
\gcl{1}\gcn{1}{1}{1}{3}\gvac{2}\gcl{1}\gcl{1}\gnl
\gcl{1}\gsbox{3}\gvac{3}\gcl{1}\gcl{1}\gnl
\gcl{1}\gcl{1}\gcn{1}{1}{1}{3}\gev\gvac{0}\gcn{1}{1}{1}{-1}\gnl
\gcl{1}\gcl{1}\gvac{1}\gev\gnl
\gob{1}{Y}\gob{1}{A}
\gend
}
~~\mbox{\rm and}~~
\eta_Y={\footnotesize
\gbeg{2}{4}
\got{2}{\un{1}}\gnl
\gdb\gnl
\gcl{1}\gmp{\epsilon}\gnl
\gob{1}{Y}\gob{1}{A}
\gend~}.
\] 
The wreath product $YA$ is an algebra in $\Cc$, with structure maps
\begin{equation}\eqlabel{3.4.1}
m_{\#}=
{\footnotesize
\gbeg{8}{11}
\gvac{4}\got{1}{Y}\got{1}{A}\got{1}{Y}\got{1}{A}\gnl
\gdb\gvac{2}\gcl{1}\gcl{1}\gcl{1}\gcl{1}\gnl
\gcl{1}\gcn{1}{1}{1}{3}\gvac{2}\gcl{1}\gcl{1}\gcl{1}\gcl{1}\gnl
\gcl{1}\gsbox{3}\gvac{3}\gcl{1}\gcl{1}\gcl{1}\gcl{1}\gnl
\gcl{1}\gcl{1}\gcl{1}\gev\gvac{0}\gcl{1}\gcl{1}\gcl{1}\gnl
\gcl{1}\gcn{1}{1}{1}{3}\gcn{1}{1}{1}{3}\gvac{1}\gcn{1}{1}{3}{1}\gcn{1}{1}{3}{1}\gcn{1}{1}{3}{1}\gnl
\gcl{1}\gvac{1}\gcl{1}\gbrc\gcl{1}\gcl{1}\gnl
\gcl{1}\gvac{1}\gmu\gev\gvac{0}\gcl{1}\gnl
\gcl{1}\gvac{1}\gcn{1}{1}{2}{5}\gvac{2}\gcn{1}{1}{3}{1}\gnl
\gcl{1}\gvac{3}\gmu\gnl
\gob{1}{Y}\gvac{3}\gob{2}{A}
\gend}
\hspace{3mm}\mbox{\rm ~~and~~}\hspace{3mm}\eta_{\#}=
{\footnotesize
\gbeg{2}{4}
\got{2}{\un{1}}\gnl
\gdb\gnl
\gcl{1}\gmp{\epsilon}\gnl
\gob{1}{Y}\gob{1}{A}
\gend~~.}
\end{equation}
(ii) If $(A,X, \psi)$ is a left wreath then $(A,Y,\varphi)$ is a left cowreath (a coalgebra in
${}_A^\#\Tc$), 
with comultiplication and counit given by the formulas
\[
\ov{\delta}={\footnotesize
\gbeg{6}{6}
\gvac{4}\got{1}{Y}\gnl
\gvac{1}\gdb\gvac{1}\gcl{1}\gnl
\gvac{1}\gcn{1}{1}{1}{-1}\gvac{-1}\gdb\gvac{-1}\gcn{1}{1}{1}{3}\gvac{1}\gcl{1}\gnl
\gcl{1}\gcl{1}\gsbox{2}\gvac{2}\gcl{1}\gnl
\gcl{1}\gcl{1}\gcl{1}\gev\gnl
\gob{1}{Y}\gob{1}{Y}\gob{1}{A}
\gend
}
~~\mbox{\rm and}~~
\ov{\epsilon}={\footnotesize
\gbeg{3}{5}
\gvac{2}\got{1}{Y}\gnl
\gvac{2}\gcl{1}\gnl
\gsbox{2}\gvac{2}\gcl{1}\gnl
\gcl{1}\gev\gnl
\gob{1}{A}
\gend}~.
\]
\end{proposition}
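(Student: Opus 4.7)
\smallskip

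\noindent\textbf{Proof proposal.}
The plan is to obtain both parts of the proposition as a direct application of the strong monoidal functor
${}^*\hspace*{-1pt}(-):\ \Tc_A^\#\to {}_A^\#\Tc^{\mathrm{oprev}}$ constructed in \thref{roots}, and then read off the explicit formulas.

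For part (i), start from the assumption that $(X,\psi)$ is a coalgebra in $\Tc_A^\#$, with comultiplication $\delta$ and counit $\epsilon$. Since ${}^*\hspace*{-1pt}(-)$ is strong monoidal, it sends coalgebras to coalgebras; thus $(Y,\varphi)={}^*\hspace*{-1pt}(X,\psi)$ is a coalgebra in ${}_A^\#\Tc^{\mathrm{oprev}}$, i.e.\ an algebra in ${}_A^\#\Tc$. By \deref{3.2} this means precisely that $(A,Y,\varphi)$ is a right wreath. To identify $m_Y$ and $\eta_Y$ one substitutes $f=\delta$ (resp.\ $f=\epsilon$) into the duality formula \equref{roots.2}, using that $XX\dashv YY$ with counit $d^2=d\circ Xd Y$ and unit $b^2= YbX \circ b$; this yields the displayed graphical expressions for $m_Y$ and $\eta_Y$. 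Finally, $YA$ is an algebra via the general wreath-product construction recalled in $m_\#$ right before the proposition, and the displayed formula for $m_\#$ is obtained by routine substitution of these $m_Y$ and $\eta_Y$ into that recipe, simplifying one evaluation/coevaluation pair using \equref{evcoev}.

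For part (ii), the argument is dual. Now $(X,\psi)$ is an algebra in $\Tc_A^\#$, so the same strong monoidal functor ${}^*\hspace*{-1pt}(-)$ sends it to an algebra in ${}_A^\#\Tc^{\mathrm{oprev}}$, equivalently a coalgebra in ${}_A^\#\Tc$, which is precisely the datum of a left cowreath $(A,Y,\varphi)$. The explicit formulas for $\ov{\delta}$ and $\ov{\epsilon}$ are obtained by applying the left analogue of \equref{roots.2} to $m_X:\ XX\to X$ and $\eta_X:\ \un{1}\to X$, again using $XX\dashv YY$ and $\un{1}\dashv \un{1}$ for the duality data. Alternatively, one can invoke the $2$-categorical machinery of \cite{StreetMW}: taking opposites inside $\mathrm{EM}(\Cc)$ swaps wreaths and cowreaths, and duality in $\Cc$ converts the left version into the right version.

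The only real obstacle is bookkeeping: one must keep track of the reversal of direction and the reversal of tensor order implicit in the target category ${}_A^\#\Tc^{\mathrm{oprev}}$ so that the graphical formulas for $m_Y$, $\eta_Y$, $\ov{\delta}$ and $\ov{\epsilon}$ come out with the correct braiding/evaluation pattern. Once the monoidal functor ${}^*\hspace*{-1pt}(-)$ is in place and \equref{roots.2} is applied componentwise, the verification that the axioms \equref{rightwreath} (resp.\ the dual set) hold requires no further work, and only the explicit identification of $m_\#$ needs a short diagrammatic simplification.
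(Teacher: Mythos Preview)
Your proposal is correct and follows essentially the same route as the paper: the paper also derives part (i) by applying \thref{roots} to transport the coalgebra $(X,\psi)$ in $\Tc_A^\#$ to an algebra $(Y,\varphi)$ in ${}_A^\#\Tc$, reads off $m_Y$ and $\eta_Y$ by specializing \equref{roots.2} to $f=\delta$ and $f=\epsilon$, and leaves part (ii) as the analogous dual computation (while noting, as you do, the alternative via the techniques of \cite{StreetMW}). The only cosmetic difference is that the paper records the intermediate formulas $m_Y=YAd^2\circ Y\delta YY\circ bYY$ and $\eta_Y=Y\epsilon\circ b$ explicitly before drawing the diagrams.
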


\subsection{Modules versus entwined modules}\selabel{3.3}
\thref{gntasmod} is the main result of this Subsection.
It is a generalization of \cite[Cor. 6.3]{hp} and its proof follows from \prref{duality(co)wreaths} and an old result 
of Eilenberg-Moore recalled in Section 1 of the paper \cite{street}. This is why we only define the functors that provide 
the desired isomorphism of categories, leaving the details to the reader.

Take  $M \in \Dc(\psi)_A^X$. The coaction $\rho:\ M\to MX$ is right $A$-linear, and satisfies 
\equref{c1} and \equref{c2}. In diagrammatic notation, these conditions take the form
\begin{equation}\label{entwined}
{\footnotesize
\gbeg{2}{4}
\got{1}{M }\got{1}{A}\gnl
\grm\gnl
\grcm\gnl
\gob{1}{M }\gob{1}{C}
\gend} =
{\footnotesize 
\gbeg{3}{5}
\got{1}{M }\got{3}{A}\gnl
\grcm\gcl{1}\gnl
\gcl{1}\gbrc\gnl
\grm\gcl{1}\gnl
\gob{1}{M }\gob{3}{C}
\gend}~~,~~~~~
{\footnotesize
\gbeg{3}{5}
\got{1}{M }\gnl
\grcm\gnl
\gcl{1}\gcn{1}{1}{1}{3}\gnl
\grcm\gcl{1}\gnl
\gob{1}{M }\gob{1}{X}\gob{1}{X}
\gend} =
{\footnotesize 
\gbeg{4}{6}
\got{1}{M }\gnl
\grcm\gnl
\gcl{1}\gcn{1}{1}{1}{3}\gnl 
\gcl{1}\gsbox{3}\gnl
\grm\gcl{1}\gcl{1}\gnl
\gob{1}{M }\gvac{1}\gob{1}{X}\gob{1}{X}
\gend}
\hspace{3mm}{\rm and}\hspace{3mm}
{\footnotesize 
\gbeg{3}{5}
\got{1}{M }\gnl
\grcm\gnl
\gcl{1}\gmp{\epsilon}\gnl
\grm\gnl
\gob{1}{M }
\gend} =
{\footnotesize 
\gbeg{1}{3}
\got{1}{M }\gnl
\gcl{1}\gnl
\gob{1}{M }
\gend}\hspace{2mm}.
\end{equation}

\begin{theorem}\thlabel{gntasmod}
Let $A$ be an algebra in a (strict) monoidal category $\Cc$, let
$\Dc$ be a right $\Cc$-category and let $(A, X, \psi)$ be a cowreath in $\Cc$. 
If $X\dashv Y$ in $\Cc$ then the categories $\Dc(\psi)_A^X$ and $\Dc_{YA}$
are isomorphic. 
\end{theorem}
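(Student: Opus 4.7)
The plan is to transport the structures through the adjunction $X\dashv Y$ using the machinery of \prref{duality(co)wreaths}(i). Conceptually, by \prref{4.4} the cowreath $(X,\psi)$ gives a comonad $-\diamond (X,\psi)$ on $\Dc_A$ whose coalgebras are exactly the objects of $\Dc(\psi)_A^X$; the adjunction $X\dashv Y$ lifts to an adjunction of endofunctors on $\Dc_A$ with right adjoint $-\diamond(Y,\varphi)$, which inherits a monad structure from the algebra structure of $(Y,\varphi)$ in ${}_A^\#\Tc$. The classical Eilenberg–Moore correspondence then identifies coalgebras of the former comonad with modules of the latter monad, and absorbing the inner $A$-action on $\Dc_A$ into the outer $Y$-action realizes these as right $YA$-modules in $\Dc$. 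This is the Eilenberg–Moore result cited just before the statement.

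Concretely, I would first define $F\colon\Dc(\psi)_A^X\to\Dc_{YA}$ on objects by
\[
\nu_M \;=\; \mu\circ (MdA)\circ (\rho YA)\colon MYA\to M,
\]
that is, coact by $(X,\psi)$, contract the new $XY$-pair via $d$, and then act by $A$. On morphisms $F$ is the identity, and $A$-linearity combined with $X$-colinearity makes a morphism automatically $YA$-linear. Unitality of $\nu_M$ reduces, using the formula $\eta_\#=Y\epsilon\circ b$ from \equref{3.4.1} together with \equref{c2}, to a single snake identity from \equref{evcoev}. The associativity of $\nu_M$ with respect to $m_\#$ is the substantive computation: after expanding $m_\#$ as in \equref{3.4.1} and $\varphi$ as in \equref{psidual}, the resulting diagram closes through coassociativity \equref{c1} of $\rho$, the $\psi$-compatibility \equref{c3} of $\rho$ with $\mu$, associativity of $\mu$, and two snake identities.

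For the inverse $G\colon\Dc_{YA}\to\Dc(\psi)_A^X$, given $(M,\nu)$ I would set
\[
\mu_M \;=\; \nu\circ (MYm)\circ (M\eta_\# A)\colon MA\to M,
\qquad
\rho_M \;=\; (\nu\circ MY\eta_A)X\circ Mb\colon M\to MX,
\]
where $\mu_M$ pre-composes $\nu$ with the algebra morphism $\iota=Ym\circ \eta_\# A\colon A\to YA$ (the ``embedding of the right factor'' obtained by multiplying the unit $\eta_\#$ of $YA$ on the right by $A$), and $\rho_M$ is the adjunction transform of the restriction $\tilde\nu=\nu\circ MY\eta_A\colon MY\to M$ of $\nu$ to a right $Y$-action. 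One then verifies the counit property, coassociativity, and $\psi$-compatibility \equref{c3} of $\rho_M$, together with associativity/unitality of $\mu_M$, all reducing to the $YA$-action axioms for $\nu$, the wreath structure of $(A,Y,\varphi)$ from \prref{duality(co)wreaths}(i), and the snake identities \equref{evcoev}. Finally, $F\circ G=\Id$ and $G\circ F=\Id$ follow from direct diagram chases built around \equref{evcoev}, the triangle identities being exactly what makes the two adjunction transforms mutually inverse.

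The main obstacle is the web of interlocking compatibilities: that $\iota$ is an algebra homomorphism (so $\mu_M$ is a valid $A$-action), that $\rho_M$ is right $A$-linear with the $\psi$-twist \equref{c3}, and dually that $\nu_M$ is associative for the intricate wreath multiplication $m_\#$. Each of these, once \equref{psidual} and \equref{3.4.1} are unpacked, reduces systematically to the cowreath axioms \equref{pdf} combined with the snake identities \equref{evcoev}. This purely formal character is precisely what allows one to appeal to the Eilenberg–Moore theorem for adjoint (co)monads recalled in \cite{street} and leave the explicit diagram chases to the reader.
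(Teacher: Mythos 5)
Your proposal is correct and follows essentially the same route as the paper: the functors you define (the $YA$-action $\mu\circ MdA\circ \rho YA$ on an entwined module, and conversely the $A$-action via the algebra map $Ym\circ\eta_{\#}A$ together with the coaction obtained as the adjunction transform of the restricted $Y$-action) are exactly the ones given in the paper's proof, which likewise invokes \prref{duality(co)wreaths} and the Eilenberg--Moore result from \cite{street} and leaves the compatibility checks to the reader.
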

\begin{proof}
We have a functor $F: \Dc(\psi)_A^X\ra \Dc_{YA}$. 
For $M\in \Dc(\psi)_A^X$, $F(M)=M\in \Cc_{YA}$ via
\begin{equation}\eqlabel{algstr}
\ov{\mu}={\footnotesize
\gbeg{4}{6}
\got{1}{M}\gvac{1}\got{1}{Y}\got{1}{A}\gnl
\grcm\gcl{1}\gcl{2}\gnl
\gcl{1}\gev\gnl
\gcl{1}\gvac{1}\gcn{1}{1}{3}{-1}\gnl
\grm\gnl
\gob{1}{M}
\gend\hspace{2mm}.}
\end{equation}   

We cal also define a functor $G: \Dc_{YA}\ra \Dc(\psi)_A^X$. 
$G(M )=M \in \Dc(\psi)_A^X$ 
via
\begin{equation}\label{comodstr}
\mu ={\footnotesize
\gbeg{2}{3}
\got{1}{M }\got{1}{A}\gnl
\grm\gnl
\gob{1}{M } 
\gend} 
={\footnotesize 
\gbeg{4}{8}
\got{1}{M }\gvac{2}\got{1}{A}\gnl
\gcl{1}\gdb\gvac{0}\gcl{1}\gnl
\gcl{1}\gcl{1}\gmp{\epsilon}\gcl{1}\gnl
\gcl{1}\gcl{1}\gmu\gnl
\gcl{1}\gcl{1}\gcn{1}{1}{2}{1}\gnl
\gsbox{3}\gnl
\gvac{1}\gcl{1}\gnl
\gvac{1}\gob{1}{M }
\gend}
\hspace{3mm}{\rm and}\hspace{3mm}
\rho=
{\footnotesize 
\gbeg{2}{3}
\got{1}{M }\gnl
\grcm\gnl
\gob{1}{M }\gob{1}{X}
\gend} 
= {\footnotesize 
\gbeg{4}{7}
\got{1}{M }\gnl
\gcl{1}\gdb\gnl
\gcl{1}\gcl{1}\gcn{1}{1}{1}{3}\gnl
\gcl{1}\gcl{1}\gu{1}\gcl{1}\gnl
\gsbox{3}\gvac{3}\gcl{1}\gnl
\gvac{1}\gcl{1}\gvac{1}\gcl{1}\gnl
\gvac{1}\gob{1}{M }\gvac{1}\gob{1}{X}
\gend}\hspace{2mm}.
\end{equation}

It can be seen easily that the functors $F$ and $G$ are inverses, and this completes the proof. 
\end{proof}

\section{Frobenius functors versus Frobenius coalgebras}\selabel{4}
\setcounter{equation}{0}
\subsection{Frobenius functors}\selabel{4.1}
Throughout this Section $(A, X, \psi)$ is a cowreath in a (strict) monoidal category $\Cc$. 
Recall that a Frobenius functor is a functor that has a right adjoint which is also a left adjoint.
The aim of this Section is to investigate when the forgetful functor $F : \Cc(\psi)_A^X\ra \Cc_A$ is Frobenius.
\leref{rightadj} tells us that $F$ always has a right adjoint $G$, so that our problem reduces to
examining whether $G$ is a left adjoint of $F$. 

\begin{lemma}\lelabel{rightadj} 
Let $\Dc$ be a right $\Cc$-category. The forgetful functor $F$ has a right adjoint $G : \Dc_A\ra \Dc(\psi)_A^X$, defined as follows:
$G(N)=N X$ is an object of $\Dc(\psi)_A^X$ via  
\[
\mu={\footnotesize
\gbeg{3}{4}
\got{1}{N }\got{1}{X}\got{1}{A}\gnl
\gcl{1}\gbrc\gnl
\grm\gcl{1}\gnl
\gob{1}{N }\gvac{1}\gob{1}{X}
\gend
}~~{\rm and}~~
\rho={\footnotesize
\gbeg{4}{5}
\got{1}{N }\gvac{1}\got{1}{X}\gnl
\gcl{1}\gvac{1}\gcl{1}\gnl
\gcl{1}\gsbox{3}\gnl
\grm\gcl{1}\gcl{1}\gnl
\gob{1}{N }\gvac{1}\gob{1}{X}\gob{1}{X}
\gend
}~.
\]
\end{lemma}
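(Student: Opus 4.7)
My plan is twofold. Conceptually, by \prref{4.4}, $\Dc_A$ is a right $\Tc_A^\#$-category, and since $(X,\psi,\delta,\epsilon)$ is a coalgebra in $\Tc_A^\#$, the endofunctor $-\diamond(X,\psi):\Dc_A\to\Dc_A$ inherits the structure of a comonad on $\Dc_A$ whose comultiplication and counit are induced by $\delta$ and $\epsilon$. By \deref{4.5}, $\Dc(\psi)_A^X$ is the Eilenberg--Moore category of comodules over this comonad, so $F$ is the forgetful functor from a category of comodules and therefore has a right adjoint given by the cofree functor $G(N)=N\diamond(X,\psi)$. Unwinding this via \prref{4.4} together with the explicit form of $\delta$ and $\epsilon$ produces precisely the module action and coaction on $NX$ displayed in the lemma.

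For a direct verification that $(NX,\mu,\rho)$ lies in $\Dc(\psi)_A^X$, I would check the defining identities by string-diagram manipulation. Associativity of $\mu_{NX}=\mu_N X\circ N\psi$ reduces, after pushing $\mu_N$ past the braiding, to the transfer-morphism axiom (\ref{eq:ta}.a) combined with associativity of $\mu_N$, while its unitality uses (\ref{eq:ta}.b). Coassociativity of $\rho_{NX}=\mu_N XX\circ N\delta$ reduces to the cowreath coassociativity (\ref{eq:pdf}.b) of $\delta$, the extra $A$-factor produced by iteration being absorbed by associativity of $\mu_N$; counitality follows from axiom (\ref{eq:pdf}.d) together with unitality of $\mu_N$. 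The compatibility of $\rho_{NX}$ with the $A$-action, namely relation \equref{c3} for $(NX,\mu_{NX},\rho_{NX})$, is precisely axiom (\ref{eq:pdf}.a), which expresses that $\delta$ is a morphism in $\Tc_A^\#$.

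For the adjunction itself I take as unit $\eta_M=\rho_M:M\to GF(M)=MX$ and as counit $\varepsilon_N=\mu_N\circ N\epsilon:FG(N)=NX\to N$. That $\eta_M$ is a morphism in $\Dc(\psi)_A^X$ is encoded by \equref{c3} (right $A$-linearity) and \equref{c1} (compatibility with the cofree coaction on $MX$); that $\varepsilon_N$ is right $A$-linear is axiom (\ref{eq:pdf}.c), asserting that $\epsilon$ is a morphism in $\Tc_A^\#$. The first triangle identity $\varepsilon_{FM}\circ F\eta_M=\Id_{FM}$ is then the counit axiom \equref{c2} for the coaction on $M$, while the second triangle identity $G\varepsilon_N\circ\eta_{GN}=\Id_{GN}$ follows from axiom (\ref{eq:pdf}.d) combined with associativity and unitality of $\mu_N$.

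The main obstacle will be the right $A$-linearity of $\rho_{NX}$, since this is the unique step where axiom (\ref{eq:pdf}.a) must be matched against the two occurrences of $\psi$ in $\mu_{NX}$ and the associativity of $\mu_N$; everything else is routine bookkeeping in the graphical calculus of Sections~\ref{se:2} and~\ref{se:3}.
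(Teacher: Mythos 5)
Your proposal is correct and takes essentially the same route as the paper, whose proof consists only of exhibiting the very same unit $\eta_M=\rho$ and counit $\varepsilon_N=\mu\circ N\epsilon$ and leaving all verifications to the reader; your comonadic framing of $G$ as the cofree $(X,\psi)$-comodule functor on $\Dc_A$ and your accounting of which cowreath axioms enter where are consistent with this. One small label to fix: the counit property \equref{c2} for the cofree comodule $NX$ uses (\ref{eq:pdf}.e) (the $\epsilon$ falls on the second $X$ produced by $\delta$, and a $\psi$ intervenes), whereas (\ref{eq:pdf}.d) is the axiom needed for the second triangle identity, as you correctly state in that step.
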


\begin{proof}
The unit and the counit of the adjunction 
are given by the formulas, 
$\eta_M =\rho : M \ra G F (M )=M  X$ and
$\varepsilon_N =\mu\circ N\epsilon:\ F G (N )=N X\ra N$, 
 for all $M \in \Dc(\psi)_A^X$ and $N \in \Dc_A$.
\end{proof}

\subsection{Frobenius coalgebras}\selabel{4.2}
The notion of Frobenius algebra in a monoidal category (as introduced in \cite{street2}, see
also \cite[Def. 4.1]{dbbt2}) can be dualized: a coalgebra in a monoidal category is Frobenius if and only
if the corresponding algebra in the opposite category is Frobenius. This leads to the following definition.

\begin{definition}\delabel{defcoFrobCoalgMon}
A coalgebra $C$ in $\Cc$ is called Frobenius if there exists a Frobenius system $(t,B)$ consisting of
morphisms
$t:\ \un{1}\to C$ (the Frobenius element) and $B:\ CC\to \un{1}$ (the Casimir morphism) in $\Cc$ such that
\begin{equation}\eqlabel{coFrobCoalg}
(a)~~{\footnotesize
\gbeg{3}{5}
\got{2}{C}\got{1}{C}\gnl
\gcmu\gcl{1}\gnl
\gcl{1}\gsbox{2}\gnot{\hspace*{5mm}B}\gnl
\gcl{1}\gnl
\gob{1}{C}
\gend}=
{\footnotesize
\gbeg{3}{5}
\got{1}{C}\got{2}{C}\gnl
\gcl{1}\gcmu\gnl
\gsbox{2}\gnot{\hspace{5mm}B}\gvac{2}\gcl{2}\gnl
\gvac{2}\gob{1}{C}\gnl
\gend}~~,~~(b)~~\hspace{2mm}
{\footnotesize
\gbeg{2}{5}
\gvac{1}\got{1}{C}\gnl
\gmpu{t}\gcl{1}\gnl
\gsbox{2}\gnot{\hspace{5mm}B}\gnl
\gob{2}{\un{1}}
\gend}={\va}_C=
{\footnotesize
\gbeg{2}{5}
\got{1}{C}\gnl
\gcl{1}\gmpu{t}\gnl
\gsbox{2}\gnot{\hspace{5mm}B}\gnl
\gob{2}{\un{1}}
\gend}~.
\end{equation}
\end{definition}

\begin{remark}\relabel{equivcoFrob}
Several equivalent characterizations of a Frobenius algebra are known, see for example
\cite[Theorem 5.1]{dbbt2}. Now a Frobenius coalgebra is a Frobenius algebra in the opposite
category, and this leads to the following equivalent characterizations of a Frobenius coalgebra.
\begin{itemize}
\item[(i)] There is an adjunction $C\dashv A$, and $C$ is isomorphic to $A$ as a right $A$-module. 
Recall from (\ref{eq:dualalgstrofcoalg}-\ref{eq:modstrcoFrob}) that $A$ is an algebra, and that $C$ is a right $A$-module. 
\item[(ii)] There is an adjunction $A'\dashv C$ and $C$ is isomorphic to $A'$ as a left $A'$-module. 
\item[(iii)] $C$ is an algebra in $\Cc$ with $C$-bicolinear multiplication. 
\item[(iv)] There is an adjunction $A'\dashv C$ and there exists a balanced right non-degenerate morphism 
$B_r:\ C  C\ra \un{1}$ in $\Cc$. This means that 
$C  B_r \circ \Delta  C=B_r  C \circ C  \Delta_C$ (balanced)
and that $\Phi_{B_r}=B_r  A'\circ C  b:\ C\ra A'$ 
is an isomorphism (non-degenerate).
\item[(v)] There is an adjunction $C\dashv A$ and there exists a balanced left non-degenerate morphism
$B_l:\ C  C\ra \un{1}$ in $\Cc$. The fact that  $B_l$ is left non-degenerate means that 
$\Psi_{B_l}=A  B_l\circ b'  C:\ C\ra A$ 
is an isomorphism. 
\item[(vi)] There is an adjunction $(b,d):\ C\dashv C$ such that 
$C  \Delta \circ b=\Delta C\circ b$.
\item[(vii)] There is an adjunction $(b,d):\ C\dashv C$ such that $b=\Delta \circ t$, for 
some $t: \un{1}\ra C$ in $\Cc$.
\end{itemize}
A Frobenius coalgebra $C$ is also a Frobenius algebra (and vice versa), with multiplication
$m=CB\circ \Delta C= BC\circ C\Delta$, unit $\eta=t$, and Frobenius system $(\varepsilon, \Delta\circ \eta)$.
\end{remark}

Specializing \deref{defcoFrobCoalgMon} to coalgebras in ${\cal T}_A^\#$, we obtain the following result.

\begin{lemma}\lelabel{CoFrobCoTsha}
A coalgebra $(X, \psi)$ in $ {\cal T}_A^\#$ is Frobenius if and only if there exist morphisms $t: \un{1}\ra A X$ 
and $B: X X\ra A$ in $\Cc$ such that
\begin{equation}\eqlabel{coFrobCoTsh}
{\rm (a)}~~~~{\footnotesize
\gbeg{3}{5}
\got{1}{A}\gnl
\gcl{1}\gnl
\gcl{1}\gsbox{2}\gnot{\hspace{5mm}t}\gnl
\gmu\gcl{1}\gnl
\gob{2}{A}\gob{1}{X}
\gend}=
{\footnotesize
\gbeg{3}{6}
\gvac{2}\got{1}{A}\gnl
\gvac{2}\gcl{2}\gnl
\gsbox{2}\gnot{\hspace{5mm}t}\gnl
\gcl{1}\gbrc\gnl
\gmu\gcl{1}\gnl
\gob{2}{A}\gob{1}{X}
\gend
}~;\hspace*{1cm}{\rm (b)}~~~~
{\footnotesize
\gbeg{3}{6}
\got{1}{X}\got{1}{X}\got{1}{A}\gnl
\gcl{1}\gcl{1}\gcl{3}\gnl
\gsbox{2}\gnot{\hspace{5mm}B}\gnl
\gcn{1}{1}{2}{3}\gnl
\gvac{1}\gmu\gnl
\gvac{1}\gob{2}{A}
\gend
}=
{\footnotesize
\gbeg{3}{7}
\got{1}{X}\got{1}{X}\got{1}{A}\gnl
\gcl{1}\gbrc\gnl
\gbrc\gcl{1}\gnl
\gcl{1}\gsbox{2}\gnot{\hspace{5mm}B}\gnl
\gcl{1}\gcn{1}{1}{2}{1}\gnl
\gmu\gnl
\gob{2}{A}
\gend
}~;
\end{equation}
$$
{\rm (c)}~~~~
{\footnotesize
\gbeg{4}{9}
\gvac{1}\got{1}{X}\gvac{1}\got{1}{X}\gnl
\gvac{1}\gcl{1}\gvac{1}\gcl{3}\gnl
\gsbox{3}\gnot{\hspace{1cm}\d}\gnl
\gcl{1}\gcl{1}\gcl{1}\gnl
\gcl{1}\gcl{1}\gsbox{2}\gnot{\hspace{5mm}B}\gnl
\gcl{1}\gcl{1}\gcn{1}{1}{2}{1}\gnl
\gcl{1}\gbrc\gnl
\gmu\gcl{1}\gnl
\gob{2}{A}\gob{1}{X}
\gend
}=
{\footnotesize
\gbeg{4}{8}
\got{1}{X}\gvac{1}\got{1}{X}\gnl
\gcl{1}\gvac{1}\gcl{1}\gnl
\gcl{1}\gsbox{3}\gnot{\hspace{1cm}\d}\gnl
\gbrc\gcl{1}\gcl{4}\gnl
\gcl{1}\gsbox{2}\gnot{\hspace{5mm}B}\gnl
\gcl{1}\gcn{1}{1}{2}{1}\gnl
\gmu\gnl
\gob{2}{A}\gvac{1}\gob{1}{X}
\gend
}~;\hspace*{1cm}{\rm (d)}~~~~
{\footnotesize
\gbeg{3}{8}
\gvac{2}\got{1}{X}\gnl
\gvac{2}\gcl{3}\gnl
\gsbox{2}\gnot{\hspace{5mm}t}\gnl
\gcl{1}\gcl{1}\gnl
\gcl{1}\gsbox{2}\gnot{\hspace{5mm}B}\gnl
\gcl{1}\gcn{1}{1}{2}{1}\gnl
\gmu\gnl
\gob{2}{A}
\gend
}=\epsilon=
{\footnotesize
\gbeg{3}{8}
\got{1}{X}\gnl
\gcl{1}\gnl
\gcl{1}\gsbox{2}\gnot{\hspace{5mm}t}\gnl
\gbrc\gcl{1}\gnl
\gcl{1}\gsbox{2}\gnot{\hspace{5mm}B}\gnl
\gcl{1}\gcn{1}{1}{2}{1}\gnl
\gmu\gnl
\gob{2}{A}
\gend}~.
$$
\end{lemma}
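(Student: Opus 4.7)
The lemma is a direct translation of \deref{defcoFrobCoalgMon} to the strict monoidal category $\Tc_A^\#$, and the strategy is simply to unpack a Frobenius system $(t,B)$ for the coalgebra $(X,\psi)\in \Tc_A^\#$ into morphisms of $\Cc$. Throughout I will use the explicit description of $\Tc_A^\#$ recalled in \seref{2.1}: its unit object is $(\un{1},A)$; a morphism $(X,\psi)\to (Y,\psi_Y)$ in $\Tc_A^\#$ is a morphism $X\to AY$ in $\Cc$ satisfying \equref{2.1.0}; composition of $f\colon (X,\psi)\to (Y,\psi_Y)$ and $g\colon (Y,\psi_Y)\to (Z,\psi_Z)$ is $g\bullet f=mZ\circ Ag\circ f$; and the tensor product of morphisms is obtained from the $\Cc$-tensor $fg$ by first applying the $\psi$ of the codomain and then multiplying on $A$.

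On the level of data, a Frobenius element $t\colon (\un{1},A)\to (X,\psi)$ in $\Tc_A^\#$ is by definition a morphism $t\colon \un{1}\to AX$ in $\Cc$, and its $\Tc_A^\#$-compatibility \equref{2.1.0} (using $\psi_{\un{1}}=A$) is precisely identity (a). Similarly, since the tensor product in $\Tc_A^\#$ yields $(X,\psi)(X,\psi)=(XX,\psi\cdot\psi)$ with $\psi\cdot\psi=\psi X\circ X\psi$, a Casimir morphism $B\colon (XX,\psi\cdot\psi)\to (\un{1},A)$ in $\Tc_A^\#$ is a morphism $B\colon XX\to A$ in $\Cc$ whose $\Tc_A^\#$-compatibility is precisely identity (b).

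For the two axioms of \equref{coFrobCoalg}, I will compute explicit $\Cc$-formulas for each of the six composites that occur when they are read inside $\Tc_A^\#$. A short graphical calculation, using only the unit axioms $m\circ A\eta=A$ and $\psi\circ X\eta=\eta X$, yields
$\d\ot \Id_{(X,\psi)}=\d X$, $\Id_{(X,\psi)}\ot B=\psi\circ XB$, $\Id_{(X,\psi)}\ot \d=\psi XX\circ X\d$, $B\ot \Id_{(X,\psi)}=BX$, $t\ot \Id_{(X,\psi)}=tX$ and $\Id_{(X,\psi)}\ot t=\psi X\circ Xt$,
with the left-hand sides computed in $\Tc_A^\#$ and the right-hand sides in $\Cc$. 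Substituting these expressions into the Frobenius relation $(\Id\ot B)\bullet (\d\ot \Id)=(B\ot \Id)\bullet (\Id\ot \d)$ and into the counit property $B\bullet (t\ot \Id)=\epsilon=B\bullet (\Id\ot t)$ translates \equref{coFrobCoalg}(a) and (b) into identities (c) and (d), respectively.

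The argument is entirely a matter of unwinding; the only attention required is to the bookkeeping for the six simplifications above, each of which is a short diagrammatic consequence of the two unit axioms together with the definitions of composition and tensor product in $\Tc_A^\#$.
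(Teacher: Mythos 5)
Your proposal is correct and follows exactly the route the paper takes: the paper's proof simply states that the lemma is Definition~\ref{de:defcoFrobCoalgMon} specialized to $\Cc={\cal T}_A^\#$, with (a), (b) being the statements that $t$ and $B$ are morphisms in ${\cal T}_A^\#$ and (c), (d) the translations of \equref{coFrobCoalg}(a), (b), leaving the details to the reader. Your explicit computations of the tensor products and compositions in $\Tc_A^\#$ (each reducing via $m\circ A\eta=A$ and $\psi\circ X\eta=\eta X$) correctly supply those omitted details.
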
 

\begin{proof}
This is basically a reformulation of \deref{defcoFrobCoalgMon} in the special case where
$\Cc={\cal T}_A^\#$. 
(a) and (b) express 
the fact that $t$ and $B$ are morphisms in ${\cal T}_A^\#$,
and (c) and (d) are a reformulation of (a) and (b) in \equref{coFrobCoalg}. We leave the 
verification of the details to the reader.  
\end{proof}

\subsection{Natural transformations, Frobenius elements and Casimir morphisms}\selabel{4.3}
\begin{definition}\delabel{generator} \cite[Def. 3.1]{dbbt2}
An object $P$ in a monoidal category $\Cc$ is called a left $\ot$-generator if the following condition is satisfied:
if $f,~g:\ YZ\to W$ are morphisms in $\Cc$ such that $f\circ h Z= g\circ h Z$ for all $h:\ P\to Y$ in 
$\Cc$, then $f=g$.
\end{definition}

It is easy to see that a left $\ot$-generator is a generator in the classical sense.
If $\un{1}$ is a left $\ot$-generator for $\Cc$, then $C$ is a Frobenius coalgebra if and only if the forgetful functor  
$\Cc^C\to \Cc$ is Frobenius. In \thref{FrobGenEntwMod}, we will prove the following result:
under the hypothesis that  $\un{1}$ is a left $\ot$-generator in $\Cc$,
the forgetful functor $F : \Cc(\psi)_A^X\ra \Cc_A$ is Frobenius if and only if $(X, \psi)$ is a Frobenius coalgebra in ${\cal T}_A^\#$.
We have to determine when $G$ is a left adjoint of $F$, and to this end we have to investigate
natural transformations $\theta:\ \Id_{\Cc_A}\ra F G$ and $\vartheta:\ GF\ra \Id_{\Cc(\psi)_A^C}$. 
We show that these natural transformations correspond to Frobenius elements (\prref{FrobElem}) and
Casimir morphisms (\prref{CasMor}). 
We recall from \leref{bim1a} that, for $N\in \Cc_A$, we have an isomorphism $\alpha:\ \Cc_A(A,N)\to \Cc(\un{1}, N)$,
$\alpha(\un{h})=h$, with $h=\un{h}\circ \eta$ and $\un{h}=\mu\circ hA$.

\begin{proposition}\prlabel{FrobElem}
Let $(A, X, \psi)$ be a cowreath in $\Cc$.
If $\un{1}$ is a left $\ot$-generator for $\Cc$, then we have an isomorphism
$\Nat(\Id_{\Cc_A}, F G)\cong \Tc^{\#}_A(\un{1}, X)$.
\end{proposition}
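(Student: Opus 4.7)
The plan is to construct mutually inverse maps $\Phi$ and $\Psi$ between $\Tc_A^{\#}(\un{1},X)$ and $\Nat(\Id_{\Cc_A},FG)$. Unpacking \equref{2.1.0} with the unit convention $\psi_{\un{1}}=A$, a morphism $t\in \Tc_A^{\#}(\un{1},X)$ is a morphism $t:\un{1}\to AX$ in $\Cc$ satisfying
\[
mX\circ At \;=\; mX\circ A\psi\circ tA.
\]

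First I would define $\Phi$ by $\Phi(t)_N=\theta^t_N:=\mu_N X\circ Nt:N\to NX$ for $N\in \Cc_A$, where $\mu_N$ denotes the right $A$-action on $N$. A short diagrammatic computation, comparing $\theta^t_N\circ \mu_N$ with $(\mu_N X\circ N\psi)\circ \theta^t_N A$ and using associativity of the action, shows that the displayed condition on $t$ is precisely what is needed for $\theta^t_N$ to be right $A$-linear for every $N$ (and is conversely forced by $N=A$). Naturality of $\theta^t$ in $N$ follows immediately: for any $f:M\to N$ in $\Cc_A$, the relation $\mu_N X\circ fAX=fX\circ \mu_M X$ gives $\theta^t_N\circ f=fX\circ \theta^t_M$.

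Conversely, define $\Psi$ by $\Psi(\theta)=t:=\theta_A\circ\eta:\un{1}\to A\to AX$. The crux of the proof is to show $\theta_N=\theta^t_N$ for every $N\in \Cc_A$. Given any $n:\un{1}\to N$ in $\Cc$, the morphism $\bar n:=\mu_N\circ nA:A\to N$ is right $A$-linear, so naturality of $\theta$ yields $\theta_N\circ \bar n=\bar n X\circ \theta_A$. Precomposing with $\eta$, using $\bar n\circ\eta=n$ and $\theta_A\circ\eta=t$, a short calculation gives
\[
\theta_N\circ n \;=\; \mu_N X\circ nAX\circ t \;=\; \mu_N X\circ Nt\circ n \;=\; \theta^t_N\circ n.
\]
Since this equality holds for every $n:\un{1}\to N$, the $\otimes$-generator hypothesis on $\un{1}$ (applied with $Y=N$, $Z=\un{1}$, $W=NX$) forces $\theta_N=\theta^t_N$.

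This argument simultaneously shows that $t$ actually lies in $\Tc_A^{\#}(\un{1},X)$: taking $N=A$, the $A$-linearity of $\theta^t_A=\theta_A$ is equivalent, by the first paragraph, to the required condition on $t$. The verifications $\Psi\circ\Phi=\Id$ and $\Phi\circ\Psi=\Id$ are now routine: the first follows from $m\circ\eta A=A$ and $\psi\circ X\eta=\eta X$, giving $\theta^t_A\circ\eta=t$; the second is exactly the identity $\theta=\theta^{\Psi(\theta)}$ established above. The only real obstacle—and the sole place where the $\otimes$-generator hypothesis is used—is the reconstruction of the entire natural family $\{\theta_N\}_{N\in\Cc_A}$ from its single component $\theta_A$; without this hypothesis, the isomorphism $\Phi$ need not be surjective.
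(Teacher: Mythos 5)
Your proof is correct and follows essentially the same route as the paper: extract $t=\theta_A\circ\eta$, use naturality together with the left $\ot$-generator hypothesis to show $\theta_N=\mu X\circ Nt$ is forced, deduce the condition $mX\circ At=mX\circ A\psi\circ tA$ from the right $A$-linearity of $\theta_A$, and conversely verify that this condition makes each $\theta^t_N$ right $A$-linear and natural. The only (cosmetic) difference is that you apply the generator property directly with $Z=\un{1}$ to all $N$ at once, whereas the paper first proves $\theta_N\circ\mu=\mu X\circ N\theta_A$ with $Z=A$ and then precomposes with $N\eta$; both are valid.
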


\begin{proof}
Consider a natural transformation $\theta:\ \Id_{\Cc_A}\ra F G$. We claim that $t=\alpha(\theta_A)= \theta_A\circ \eta:\ \un{1}\to AX$
is a morphism $\un{1}\to X$ in  $\Tc_A^{\#}$. Take $h\in \Cc(\un{1}, A)$. From the naturality of $\theta$, it follows that
$\theta_A\circ \un{h}=\un{h}X\circ \theta_A$, hence
$$
mX\circ At \circ h= mX\circ hAX\circ t= \un{h}X\circ \theta_A \circ \eta= \theta_A\circ \un{h} \circ \eta
= \theta_A\circ m\circ A\eta\circ h=\theta_A\circ h,
$$
so that 
\begin{equation}\eqlabel{FrobElem1}
mX\circ At=\theta_A,
\end{equation}
since $\un{1}$ is a left $\ot$-generator. Using the right $A$-linearity of $\theta_A$, we find that
$$\theta_A=\theta_A\circ m\circ \eta A= mX\circ A\psi \circ \theta_A A\circ  \eta A=mX\circ A\psi \circ tA.$$
It follows that
$\theta_A= mX\circ At=mX\circ A\psi \circ tA$,
which is precisely (\ref{eq:coFrobCoTsh}.a), expressing that $t\in \Tc^{\#}_A(\un{1}, X)$.\\
Our next aim is to show that $\theta$ is completely determined by $t$. $\theta_A$ is given by \equref{FrobElem1}.
Take $N\in \Cc_A$ and $h:\ \un{1}\to N$ in $\Cc$. Then
$$
\theta_N\circ \mu\circ hA=\theta_N\circ\un{h}\equal{(*)} g_hX\circ \theta_A=\mu X\circ hAX\circ \theta_A=
\mu X\circ N\theta_A\circ hA.
$$
At $(*)$, we used the naturality of $\theta$. From the fact that $\un{1}$ is a left $\ot$-generator, it follows that
$\theta_N\circ \mu= \mu X\circ N\theta_A$ (\ref{eq:coFrobCoTsh}.a) and
$\theta_N= \theta_N\circ \mu\circ  N\eta= \mu X\circ N\theta_A\circ N\eta= \mu X\circ N t$. 
We conclude that
\begin{equation}\eqlabel{FrobElem2}
\theta_N=\mu X\circ N t
\end{equation}
is completely determined by $t$.

Finally, for $t\in \Tc^{\#}_A(\un{1}, X)$, we define $\theta$ using \equref{FrobElem2}. We show that $\theta_N$ is 
right $A$-linear, for all $N\in \Cc_A$.
\begin{eqnarray*}
\mu_{NX}\circ \theta_NA&\equal{\equref{FrobElem2}}& \mu X\circ N\psi\circ \mu XA\circ N tA
= \mu X\circ \mu AX\circ NA\psi \circ NtA\\
&\equal{(x)}&\mu X\circ NmX\circ NA\psi \circ NtA
\equal{(y)} \mu X\circ NmX\circ NAt\\
&\equal{(x)}&\mu X\circ \mu AX\circ NAt= \mu X\circ Nt \circ \mu \equal{\equref{FrobElem2}}  \theta_N\circ \mu.
\end{eqnarray*}
At $(x)$ we used the associativity of $\mu$, and at $(y)$, we used the fact that $t\in \Tc^{\#}_A(\un{1}, X)$.
\end{proof}

\begin{proposition}\prlabel{CasMor}
Let $(A,X,\psi)$ be a cowreath in $\Cc$. 
If $\un{1}$ is a left $\ot$-generator for $\Cc$, then we have a bijective correspondence between 
$\Nat(GF, \Id_{\Cc(\psi)_A^X})$ and the set 
of Casimir morphisms for $(X, \psi)$, that is the subset of $\Tc^{\#}_A(XX, \un{1})$ consisting of morphisms
$B:\ XX\to A$ satisfying (\ref{eq:coFrobCoTsh}.c).
\end{proposition}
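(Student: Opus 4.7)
The plan follows the pattern of \prref{FrobElem}. Given $\vartheta\colon GF\to\Id_{\Cc(\psi)_A^X}$, evaluate at the free entwined module $G(A)=AX$ to get $\vartheta_{G(A)}\colon AXX\to AX$, and set
$$B=\varepsilon_A\circ\vartheta_{G(A)}\circ\eta XX\colon XX\to A,$$
where $\varepsilon_A=\mu\circ A\epsilon\colon AX\to A$ is the counit of $F\dashv G$ at $A$. Conversely, given a Casimir morphism $B$, define, for each $M\in\Cc(\psi)_A^X$,
$$\vartheta^B_M=\mu_M\circ MB\circ\rho_M X\colon MX\to M.$$

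First I would verify that $\vartheta^B$ is well-defined. Naturality in $M$ is immediate from the $A$-linearity and $X$-colinearity of morphisms of entwined modules. Right $A$-linearity of each $\vartheta^B_M$ follows from \equref{c3}, associativity of $\mu_M$, and condition (b) of \equref{coFrobCoTsh} (which expresses that $B$ is a morphism in $\Tc_A^\#$). The $X$-colinearity uses \equref{c1}, \equref{c3}, the coaction on $GF(M)$ described in \leref{rightadj}, and condition (c) of \equref{coFrobCoTsh}. The identity $\varepsilon_A\circ\vartheta^B_{G(A)}\circ\eta XX=B$ is a direct diagrammatic check: after expanding using the explicit formulas for $\rho_{AX}$ and $\mu_{AX}$, one applies condition (c) of \equref{pdf} and then the right counit axiom (d) of \equref{pdf} to reduce the expression back to $B$.

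The opposite direction (that the $B$ derived from $\vartheta$ yields $\vartheta^B=\vartheta$) requires more care. For each $N\in\Cc_A$, set $\Phi_N=\varepsilon_N\circ\vartheta_{G(N)}\colon NXX\to N$, which is right $A$-linear. The naturality of $\vartheta$ with respect to $G(\un{h})=\un{h}X\colon G(A)\to G(N)$ for $h\colon\un{1}\to N$ (with $\un{h}=\mu_N\circ hA$), followed by composition with $\varepsilon_N$ and precomposition with $\eta XX$, yields
$$\Phi_N\circ hXX=\un{h}\circ B.$$
A direct computation gives $\Phi^B_N=\mu_N\circ NB\colon NXX\to N$, which is $A$-linear by condition (b) of \equref{coFrobCoTsh} and satisfies the same identity for every $h$. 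The left $\ot$-generator property of $\un{1}$ then forces $\Phi_N=\Phi^B_N$, and the adjunction bijection $\Cc(\psi)_A^X(G(NX),G(N))\cong\Cc_A(NXX,N)$ implies $\vartheta_{G(N)}=\vartheta^B_{G(N)}$. To pass to arbitrary $M\in\Cc(\psi)_A^X$, apply the naturality square of $\vartheta$ to the coaction $\rho_M\colon M\to GF(M)$ and compose with $\varepsilon_{FM}$; using \equref{c2} one obtains $\vartheta_M=\varepsilon_{FM}\circ\vartheta_{GFM}\circ\rho_M X$, together with the analogous identity for $\vartheta^B$, so $\vartheta_M=\vartheta^B_M$ follows from $\vartheta_{GFM}=\vartheta^B_{GFM}$.

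The main obstacle is the diagrammatic verification that each $\vartheta^B_M$ lies in $\Cc(\psi)_A^X$---in particular, the $X$-colinearity, where the subtle interaction between $\rho_M$, $\delta$, $\psi$, and condition (c) of \equref{coFrobCoTsh} must be carefully unpacked.
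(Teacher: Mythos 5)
Your two correspondence maps coincide with the paper's (your $B=\varepsilon_A\circ\vartheta_{G(A)}\circ\eta XX$ is the paper's $m\circ A\epsilon\circ\vartheta_{AX}\circ\eta XX$, and $\vartheta^B_M=\mu\circ MB\circ\rho X$ is \equref{CasMor7}), and several of your intermediate claims (the identity $\Phi_N\circ hXX=\un{h}\circ B$, the computation $\varepsilon_N\circ\vartheta^B_{G(N)}=\mu\circ NB$, the generator argument, the reduction of general $M$ to $GF(M)$ via \equref{c2}) are correct and parallel the paper's derivation of \equref{CasMor1} and \equref{Casmor1b}. But there is a genuine gap: you never prove that the morphism $B$ extracted from a natural transformation $\vartheta$ is actually a Casimir morphism, i.e.\ that it satisfies (\ref{eq:coFrobCoTsh}.b) (so that $B\in\Tc^{\#}_A(XX,\un{1})$) and (\ref{eq:coFrobCoTsh}.c). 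Without this the assignment $\vartheta\mapsto B$ does not even land in the set that the proposition pairs with $\Nat(GF,\Id_{\Cc(\psi)_A^X})$, so no bijection has been established. This is not a routine check: in the paper it is the bulk of the proof, carried out by two nontrivial diagram chases which use, besides naturality and the cowreath axioms \equref{pdf}, the right $A$-linearity of $\vartheta_{AX}$ (for (\ref{eq:coFrobCoTsh}.b)) and the fact that $\vartheta_{AX}$ preserves the right $X$-coaction (for (\ref{eq:coFrobCoTsh}.c)).

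The omission also makes your uniqueness step circular as written: the adjunction bijection $\Cc(\psi)_A^X(G(NX),G(N))\cong\Cc_A(NXX,N)$ can only be applied to $\vartheta^{B}_{G(N)}$ once you know it is a morphism of entwined modules, and you established that only for $B$ Casimir; the same remark applies to the ``analogous identity for $\vartheta^B$'' used at the very end. The paper avoids any appeal to $\vartheta^B$ at this stage: it shows directly, from naturality of $\vartheta$ at the morphisms $\un{h}X$ and at the coaction $\rho:M\to GF(M)$ together with \equref{c2}, that $\vartheta_M=\mu\circ MB\circ\rho X$ for every entwined module $M$, and only afterwards verifies (\ref{eq:coFrobCoTsh}.b) and (\ref{eq:coFrobCoTsh}.c) for $B$. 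So you should either add these two missing verifications (after which your outline closes, since $\vartheta^{B(\vartheta)}$ is then a legitimate natural transformation and your composite-identity arguments go through), or reorganize the uniqueness step as in the paper and still supply the two verifications; in either case they cannot be skipped.
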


\begin{proof}
Consider a natural transformation $\vartheta:\ GF\to  \Id_{\Cc(\psi)_A^X}$, and take $N\in \Cc_A$.
Then $G(X)=NX$ is an entwined module, and for all $h\in \Cc(\un{1}, N)$, we have that
\begin{eqnarray*}
\vartheta_{NX}\circ hXX
&=&
\vartheta_{AX}\circ \un{h}XX\circ \eta XX
\equal{{(a)}} \un{h}X\circ \vartheta_{AX}\circ \eta XX\\
&=& \mu X\circ hAX\circ \vartheta_{AX}\circ \eta XX
= \mu X\circ N \vartheta_{AX} \circ hAXX \circ \eta XX\\
&=& \mu X\circ N\vartheta_{AX} \circ N\eta XX\circ hXX.
\end{eqnarray*}
At $(a)$, we used the naturality of $\vartheta$. Let $\zeta= \vartheta_{AX} \circ \eta XX$. From the fact that
$\un{1}$ is a left $\ot$-generator for $\Cc$, it follows that
\begin{equation}\eqlabel{CasMor1}
\vartheta_{NX}=\mu X\circ N\zeta.
\end{equation}
For an entwined module $M$, the coaction $\rho:\ M\to MX$ is a morphism of entwined modules, and it follows from
the naturality of $\vartheta$ that 
\begin{equation}\eqlabel{CasMor3}
\rho\circ \vartheta_M=\vartheta_{MX}\circ \rho X.
\end{equation} 
This enables us to compute that
\begin{eqnarray}
\vartheta_M&\equal{\equref{c2}}&\mu\circ M\epsilon \circ \rho\circ \vartheta_M
\equal{(\ref{eq:CasMor1},\ref{eq:CasMor3})} \mu\circ M\epsilon \circ \mu X\circ M\zeta\circ \rho X\nonumber\\
&=&  \mu\circ Mm\circ MA\epsilon \circ M\zeta \circ \rho X= \mu \circ MB \circ \rho X\eqlabel{Casmor1b},
\end{eqnarray}
with 
\begin{equation}\eqlabel{CasMor4}
B=m\circ A\epsilon \circ \zeta= m\circ A\epsilon \circ \vartheta_{AX}\circ \eta XX:\ XX\to A.
\end{equation}
This shows that $\vartheta$ is completely determined by $B$.\\

We claim that $B\in \Tc_A^{\#}(XX, \un{1})$. To this end, we need to show that (\ref{eq:coFrobCoTsh}.b) holds,
that is, 
\begin{equation}\eqlabel{CasMor5}
m^2\circ A\epsilon A\circ \vartheta_{AX}A\circ \eta XXA=
m\circ A\epsilon \circ mX\circ A\vartheta_{AX} \circ A\eta XX\circ \psi^2.
\end{equation}
Observe that
$$mX\circ A\vartheta_{AX} \circ A\eta XX=
\vartheta_{AX}\circ mXX\circ A\eta XX= \vartheta_{AX}
= \vartheta_{AX}\circ mXX\circ \eta AXX,$$
so that the right hand side of \equref{CasMor5} equals
$m\circ A\epsilon \circ mX\circ A\vartheta_{AX} \circ \eta AXX\circ \psi^2$.
\equref{CasMor5} then follows from the commutativity of the diagram
\vspace*{-6mm}
$$\xymatrix{
XXA\ar[r]^{\eta XXA}\ar[d]_{\psi^2}&
AXXA\ar[d]_{A\psi^2}\ar[r]^{\vartheta_{AX}A}&
AXA\ar[r]^{A\psi}\ar@/^1.4pc/[rr]^{A\epsilon A}&
AAX\ar[d]^{mX}\ar[r]^{AA\epsilon}&
AAA\ar[d]^{mA}\\
AXX\ar[r]^{\eta AXX}&
AAXX\ar[r]^{mXX}&
AXX\ar[r]^{\vartheta_{AX}}&
AX\ar[r]^{A\epsilon}&
AA}$$
The commutativity of the two squares is obvious, and the commutativity of the rectangle in the middle follows from
the right $A$-linearity of $\vartheta_{AX}$. It follows from \equref{pdf} that $AA\epsilon\circ A\psi=A\epsilon A$.\\

Our next step is to show that $B$ as defined in \equref{CasMor4} satisfies (\ref{eq:coFrobCoTsh}.c), or
\begin{equation}\eqlabel{CasMor6}
mX\circ A\psi\circ AXm \circ AXA\epsilon\circ AX\zeta\circ \delta X
= m^2X\circ AA\epsilon X\circ A\zeta X\circ \psi XX\circ X\delta.
\end{equation}
We will show that the two sides of \equref{CasMor6} are equal to $\zeta$. First
\begin{eqnarray*}
\zeta&=& \vartheta_{AX}\circ \eta XX\\
&\equal{\equref{Casmor1b}}&
mX\circ A\psi\circ AXm\circ AXA\epsilon\circ AX\zeta\circ mXXX\circ A\delta X\circ \eta XX\\
&=&
mX\circ A\psi\circ AXm\circ AXA\epsilon\circ AX\zeta\circ mXXX\circ \eta AXXX\circ \delta X\\
&=&
mX\circ A\psi\circ AXm \circ AXA\epsilon\circ AX\zeta\circ \delta X,
\end{eqnarray*}
the left hand side of \equref{CasMor6}. The diagram below is commutative. 
$$\xymatrix{
XX\ar[d]_{\eta XX}\ar[r]^{X\delta}&
XAXX\ar[r]^{\psi XX}&
AXXX\ar[d]_{\eta AX^3}\ar[dr]^{=}&&\\
AXX\ar[r]^{AX\delta}\ar[dr]_{\theta_{AX}}&
AXAXX\ar[r]^{A\psi X^2}&
AAXXX\ar[r]^{mX^3}&
AXXX\ar[d]_{\vartheta_{AX}X}\ar[dr]^{A\zeta X}&\\
& AX\ar[r]^{A\delta}\ar[rdd]_{A\eta X}&
AAXX\ar[d]^{AA\epsilon X}\ar[r]^{mX^2}&
AXX\ar[d]_{A\epsilon X}&
AAXX\ar[l]_{mX^2}\ar[d]^{AA\epsilon X}\\
&&AAAX\ar[d]^{AmX}\ar[r]^{mAX}&
AAX\ar[d]_{mX}&
AAAX\ar[l]_{mAX}\ar[dl]^{m^2X}\\
&&AAX\ar[r]^{mX}&
AX&}$$
The septangle in the middle commutes because $\theta_{AX}$ preserves the right $X$-coaction. The commutativity
of all the other parts of the diagram is obvious. It follows from the commutativity of the diagram that
$\zeta= mX\circ A\eta X\circ \vartheta_{AX}\circ \eta XX$ is equal to the right hand side of \equref{CasMor6}.\\

Finally, for $B\in \Tc^{\#}_A(XX, \un{1})$ satisfying (\ref{eq:coFrobCoTsh}.c), we define $\vartheta$ using
the formula
\begin{equation}\eqlabel{CasMor7}
\vartheta_M=\mu \circ MB \circ \rho X:\ MX\to M,
\end{equation}
for any entwined module $M$. It is left to the reader to show that $\vartheta_M$ is a morphism of entwined modules,
and that $\vartheta$ is natural in $M$.
\end{proof}

\subsection{Frobenius functors and Frobenius systems}\selabel{4.4}
\begin{theorem}\thlabel{FrobGenEntwMod}
Let $(A,X,\psi)$ be a cowreath in $\Cc$. 
If $\un{1}$ is a left $\ot$-generator for $\Cc$, then the forgetful functor ${F}: \Cc(\psi)_A^X\ra \Cc_A$ is Frobenius if and only if
$(X, \psi)$ is a Frobenius coalgebra in ${\cal T}_A^\#$.  
\end{theorem}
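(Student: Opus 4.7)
The plan is to identify pairs $(\theta,\vartheta)$ witnessing $G\dashv F$ with Frobenius systems $(t,B)$ for the coalgebra $(X,\psi)$ in $\Tc_A^{\#}$, using \prref{FrobElem} and \prref{CasMor}. By \leref{rightadj}, $G$ is always a right adjoint of $F$, so $F$ is Frobenius if and only if there exist natural transformations $\theta:\ \Id_{\Cc_A}\to FG$ and $\vartheta:\ GF\to \Id_{\Cc(\psi)_A^X}$ satisfying the triangle identities $F\vartheta\circ \theta F=\Id_F$ and $\vartheta G\circ G\theta=\Id_G$.

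By \prref{FrobElem}, the assignment $\theta\mapsto t:=\theta_A\circ \eta$ is a bijection between $\Nat(\Id_{\Cc_A},FG)$ and $\Tc_A^{\#}(\un{1},X)$, with inverse $\theta_N=\mu_N X\circ Nt$. By \prref{CasMor}, $\vartheta$ corresponds bijectively to a Casimir morphism $B\in \Tc_A^{\#}(XX,\un{1})$ via $\vartheta_M=\mu_M\circ MB\circ \rho_M X$. Together, these two propositions already encode conditions (a), (b) and (c) of \leref{CoFrobCoTsha}, so the problem reduces to showing that, under these bijections, the two triangle identities for $(\theta,\vartheta)$ are equivalent to the two counit equations in~(\ref{eq:coFrobCoTsh}.d).

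To establish this last equivalence, I would substitute the explicit formulas for $\theta$ and $\vartheta$ into each triangle, and use naturality plus the hypothesis that $\un{1}$ is a $\ot$-generator to reduce the verification to a single test object for each triangle. For the first triangle one can test on $M=G(A)=AX$ with the canonical entwined-module structure from \leref{rightadj}; for the second, on $N=A\in \Cc_A$. After unwinding the module and comodule structures of \leref{rightadj}, invoking the right $A$-linearity of $\rho$ and the properties of $t$ and $B$ as morphisms in $\Tc_A^{\#}$, each triangle collapses to the corresponding equation of~(\ref{eq:coFrobCoTsh}.d); the converse follows by running the same manipulations backwards to produce the two triangles from an arbitrary Frobenius system.

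The main obstacle is precisely this diagram chase. It is more delicate than in the classical entwining setting because $t$ and $B$ land in $AX$ and $A$ rather than in $X$ and $\un{1}$, so each composition contains interior copies of $A$ that must be eliminated via the multiplication $m$ and the cowreath axioms~\equref{pdf}. The entwining $\psi$ appears repeatedly and has to be migrated through the diagram using the $\Tc_A^{\#}$-morphism properties (\ref{eq:coFrobCoTsh}.a) and (\ref{eq:coFrobCoTsh}.b) of $t$ and $B$, before the coalgebra axioms of $(X,\delta,\epsilon)$ and the (co)unit axioms of $A$ can simplify what remains; this is where the parallel with the Doi--Koppinen and entwining cases of \cite{brFM} becomes crucial, and where the $\ot$-generator assumption is indispensable, since it alone permits us to cancel the outer factor $M$ (resp.\ $N$) in the reduction.
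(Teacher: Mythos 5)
Your proposal follows essentially the same route as the paper: use \leref{rightadj}, the bijections of Propositions \ref{pr:FrobElem} and \ref{pr:CasMor} to encode $(\theta,\vartheta)$ as $(t,B)$ satisfying (\ref{eq:coFrobCoTsh}.a--c), and then show the two triangle identities are equivalent to the two counit equations (\ref{eq:coFrobCoTsh}.d) by testing on $M=AX$ and $N=A$. The only remarks worth making are that in the paper the converse for the second triangle additionally exploits that $g=m\circ AB\circ tX$ is itself a morphism in $\Tc_A^{\#}$ (so \equref{2.1.0} applies), and that the $\ot$-generator hypothesis is consumed inside the two propositions, the final cancellation of the outer factor being done instead with the unit of $A$ and the cowreath axioms (\ref{eq:pdf}.c)--(\ref{eq:pdf}.e).
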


\begin{proof}
Let $F\dashv G$ be the adjunction described in \leref{rightadj}. The functor $F$ is Frobenius if and only if $G\dashv F$,
and this is equivalent to the existence of $\theta\in \Nat(\Id_{\Cc_A}, F G)$ and $\vartheta\in \Nat(GF, \Id_{\Cc(\psi)_A^X})$ 
such that $F(\vartheta_M)\circ \theta_{F(M)}=F(M)$ and $\vartheta_{G(N)}\circ G_{\theta_N}=G(N)$, for all
$M\in \Cc(\psi)_A^C$ and $N\in \Cc_A$.\\
Fix $\theta\in \Nat(\Id_{\Cc_A}, F G)$ and $\vartheta\in \Nat(GF, \Id_{\Cc(\psi)_A^X})$, and let
$t\in \Tc^{\#}_A(\un{1}, X)$ and $B\in \Tc^{\#}_A(XX, \un{1})$ be the Frobenius element and the Casimir morphism
corresponding to $\theta$ and $\vartheta$, see Propositions \ref{pr:FrobElem} and \ref{pr:CasMor}. $(t,B)$ is
a Frobenius system if and only if (\ref{eq:coFrobCoTsh}.d) holds, which comes down to the following: $f=g=\epsilon$,
where 
$f=m\circ AB\circ \psi X\circ Xt$ and $g=m\circ AB\circ tX$.
It is easy to verify that $f_M=F(\vartheta_M)\circ \theta_{F(M)}=\mu\circ MB\circ \rho X\circ \mu X\circ Mt$,
the composition in the top row of the diagram
\vspace*{-2mm}
$$\xymatrix{
M\ar[d]_{\rho}\ar[r]^{Mt}\ar@/^1.4pc/[rrrrr]^{f_M}&
MAX\ar[d]^{\rho AX}\ar[r]^{\mu X}&
MX\ar[r]^{\rho X}&
MXX\ar[r]^{MB}&
MA\ar[r]^{\mu}&
M\\
MX\ar[r]^{MXt}\ar@/^-1.4pc/[rrrrr]_{Mf}&
MXAX\ar[rr]^{M\psi X}&&
MAXX\ar[u]^{\mu XX}\ar[r]^{MAB}&
MAA\ar[u]_{\mu A}\ar[r]^{Mm}&
MA\ar[u]_{\mu}}$$
The pentangle in the diagram commutes by \equref{c3}. The right square commutes by the associativity of $\mu$, and the
commutativity of the two other squares is obvious. We conclude that the diagram commutes.\\
If $f=\epsilon$, then it follows that $F(\vartheta_M)\circ \theta_{F(M)}=f_M=\mu\circ M\epsilon\circ \rho\equal{\equref{c2}} M$.\\
Conversely, if $f_M=M$, for every entwined module $M$, in particular, $f_{AX}=AX$. From the commutativity of the above diagram,
it follows that $\mu_{AX}\circ AXf\circ \rho_{AX}=AX$. Using the unit property of $A$, we find that
\begin{equation}\eqlabel{epsilon}
m\circ A\epsilon\circ \eta X= m\circ \eta A\circ \epsilon=\epsilon.
\end{equation}
Then we compute that
\begin{eqnarray*}
\epsilon&=&
m\circ A\epsilon\circ \mu_{AX}\circ AXf\circ \rho_{AX}\circ \eta X\\
&=& m\circ A\epsilon\circ mX \circ A\psi \circ AXf\circ mXX\circ A\delta\circ \eta X\\
&=& m\circ mA \circ AA\epsilon\circ A\psi \circ AXf\circ \delta\\
&\equal{(\ref{eq:pdf}.c)}& m\circ mA \circ A\epsilon A\circ AXf\circ \delta\\
&=& m\circ Af \circ mX \circ A\epsilon X\circ\delta\\
&\equal{(\ref{eq:pdf}.d)}& M\circ Af\circ \eta X= m\circ \eta A\circ f=f.
\end{eqnarray*}
We conclude that $f=\epsilon$ if and only if $F(\vartheta_M)\circ \theta_{F(M)}=F(M)$, for all $M\in \Cc(\psi)_A^X$. 
In a similar way, we show that 
$g=\epsilon$ if and only if $g_N=\vartheta_{G(N)}\circ G_{\theta_N)}=G(N)$, for all $N\in \Cc_A$. 
$$g_N=\mu X\circ N\psi\circ NXB\circ \mu XXX\circ N\delta X\circ \mu XX\circ NtN,$$
and some (complicated) diagram chasing arguments  using
(\ref{eq:coFrobCoTsh}.a-c) show that
\begin{equation}\eqlabel{FrobGenEntwMod1}
g_N=\mu X\circ N\mu X\circ NAgX\circ N\delta.
\end{equation}
If $g=\epsilon$, then it follows that
$g_N=\mu X\circ N\mu X\circ NA\epsilon X\circ N\delta\equal{(\ref{eq:pdf}.d)} \mu X\circ N\eta X=NX$.\\
The converse implication is more subtle. The tensor product in $\Tc_A^{\#}$ of $t\in \Tc_A^{\#}(\un{1}, X)$ and
the identity $\eta X\in \Tc_A^{\#}(X,X)$ is $tX\in  \Tc_A^{\#}(X,XX)$. The composition in $\Tc_A^{\#}$ 
of $tX\in  \Tc_A^{\#}(X, XX)$ and $B\in \Tc_A^{\#}(XX, \un{1})$ is then precisely $g\in \Tc_A^{\#}(X, \un{1})$, see \seref{2}.
We conclude that $g$ is a morphism in $\Tc_A^{\#}(X, \un{1})$, so that it follows from \equref{2.1.0} that
\begin{equation}\eqlabel{FrobGenEntwMod2}
m\circ A g\circ \psi=m\circ gA.
\end{equation}
If $g_A=AX$, then it follows that
\begin{eqnarray*}
\epsilon&\equal{\equref{epsilon}}&m\circ A\epsilon\circ \eta X=m\circ A\epsilon\circ AX\circ  \eta X
~\equal{\equref{FrobGenEntwMod1}}~ m\circ A\epsilon\circ m^2X\circ AAgX\circ A\delta\circ \eta X\\
&=& m\circ Am\circ AgA \circ AX\epsilon \circ\delta
~\equal{\equref{FrobGenEntwMod2}}~
m\circ Am\circ AAg \circ A\psi \circ AX\epsilon \circ\delta\\
&=& m\circ mA\circ AAg \circ A\psi \circ AX\epsilon \circ\delta
= m\circ Ag \circ mX\circ A\psi \circ AX\epsilon \circ \delta\\
&\equal{(\ref{eq:pdf}.e)}
& m\circ Ag \circ \eta X= m\circ \eta A\circ g=g.
\end{eqnarray*}
\end{proof}

\section{Frobenius coalgebras versus Frobenius corings}\selabel{5}
\setcounter{equation}{0}
Throughout this Section, $\Cc$ is a (strict) monoidal category with coequalizers, and $A$ is a left coflat algebra in $\Cc$.
${}^!\Tc^{\#}_A$ is the full subcategory of $\Tc^{\#}_A$ consisting of right transfer morphisms $(X,\psi)$ with
$X$ left coflat in in $\Cc$.

\begin{lemma}\lelabel{6.1}
We have a fully faithful strong monoidal functor $H:\ {}^!\Tc^{\#}_A\to \ACA$.
\end{lemma}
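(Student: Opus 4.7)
The plan is to define $H$ on objects by $H(X, \psi) := AX$, equipped with the left $A$-action $\nu_{AX} := mX$ and the right $A$-action $\mu_{AX} := mX \circ A\psi$, and on a morphism $f: (X, \psi) \to (Y, \psi')$ in $\Tc_A^{\#}$---which by definition is a morphism $f: X \to AY$ in $\Cc$ satisfying \equref{2.1.0}---by $H(f) := mY \circ Af$. The verification that $H(X, \psi) \in \ACA$ is routine: associativity and unitality of $\mu_{AX}$ translate exactly to \equuref{ta}{a} and \equuref{ta}{b}; compatibility of $\mu_{AX}$ with $\nu_{AX}$ is the associativity of $m$; left coflatness of $AX$ follows by composing the coflatness of $A$ and of $X$; and robustness as a left $A$-module is immediate from \prref{bim3} applied to $N = X$. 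Functoriality is then straightforward: $H(\eta X) = 1_{AX}$ by the unit axiom for $A$, and $H(g \bullet f) = H(g) \circ H(f)$ by the associativity of $m$ combined with interchange.

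For fully-faithfulness, \leref{bim1a} supplies a natural bijection $\alpha: {}_A\Cc(AX, AY) \to \Cc(X, AY)$ whose inverse is $\alpha^{-1}(g) = mY \circ Ag$, which is precisely $H(g)$. I would show that $\alpha^{-1}$ restricts to a bijection between $\Tc_A^{\#}((X,\psi), (Y,\psi'))$ and $\ACA(H(X,\psi), H(Y,\psi'))$. Given $g$ satisfying \equref{2.1.0}, unwinding $H(g) \circ \mu_{AX}$ via interchange and associativity gives $mY \circ AmY \circ AAg \circ A\psi$, which by \equref{2.1.0} equals $mY \circ AmY \circ AA\psi' \circ AgA$; unwinding $\mu_{AY} \circ H(g)A$ similarly yields $mY \circ A\psi' \circ mYA \circ AgA$, and the two coincide thanks to the identity $mY \circ AmY \circ AA\psi' = mY \circ A\psi' \circ mYA$, which simply expresses ``multiply the three $A$'s and entwine past $Y$'' in two ways and follows from interchange plus associativity of $m$. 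Conversely, if $\un g: AX \to AY$ is right $A$-linear, precomposing the identity $\un g \circ \mu_{AX} = \mu_{AY} \circ \un g A$ with $\eta XA: XA \to AXA$, together with the easily verified $\mu_{AX} \circ \eta XA = \psi$, recovers \equref{2.1.0} for $g = \alpha(\un g)$.

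For strong monoidality, $H(\un{1}, A) = A$ carries both actions equal to $m$, so it coincides with the unit of $\ACA$. For the binary constraint, the identification following \prref{bim1} supplies a canonical isomorphism of objects $AX \otimes_A AY \cong AXY$ whose coequalizer map is $q = \mu_{AX} Y = mXY \circ A\psi Y$. I would promote this to an isomorphism in $\ACA$ between $H(X, \psi) \otimes_A H(Y, \psi')$ and $H((X,\psi)(Y,\psi')) = AXY$ with right action $mXY \circ A(\psi Y \circ X\psi') = mXY \circ A\psi Y \circ AX\psi'$. The left actions agree tautologically. The main obstacle I expect is verifying that the right action on $AX \otimes_A AY$ induced from $\mu_{AY} = mY \circ A\psi'$ matches the formula above; via the universal property of $q$, this reduces to the equality $q \circ AX\mu_{AY} = (mXY \circ A\psi Y \circ AX\psi') \circ qA$, a diagram chase in which the three $A$'s appearing in $AXAYA$ are moved past $X$ and $Y$ using $\psi$ and $\psi'$ in two different orders, the two orders being equivalent by interchange, and then collapsed by associativity of $m$. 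Once this isomorphism is established in $\ACA$, naturality in $(X, \psi), (Y, \psi')$ and the coherence axioms for a strong monoidal functor follow routinely from the coherence already present in $\Tc_A^{\#}$ and in $\ACA$.
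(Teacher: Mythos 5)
Your proposal is correct and follows essentially the same route as the paper's own proof: $H(X,\psi)=AX$ with $\nu=mX$ and $\mu=mX\circ A\psi$ (robustness from \prref{bim3}, left coflatness of $AX$ from that of $A$ and $X$), $H(f)=mY\circ Af$ with the inverse on hom-sets given by precomposition with $\eta X$ as in \leref{bim1a}, and the monoidal constraint $\varphi_2$ being the canonical coequalizer isomorphism $AX\ota AY\cong AXY$ supplied by \prref{bim1}; you merely fill in verifications the paper leaves to the reader. One small correction to your last step: the equality $q\circ AX\mu_{AY}=(mXY\circ A\psi Y\circ AX\psi')\circ qA$ is not a consequence of interchange and associativity of $m$ alone---it also requires \equuref{ta}{a} (multiplicativity of $\psi$), exactly as the associativity of $\mu_{AX}$ does, since on one side $\psi$ acts on a product of two copies of $A$ while on the other it acts on each factor separately.
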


\begin{proof}
Take $(X,\psi)\in {}^!\Tc^{\#}_A$. 
It follows from \prref{bim3} that $AX$ is robust as a left $A$-module; $AX$ is left coflat since $A$ and $X$ are
left coflat. $AX$ is an $A$-bimodule, with left $A$-action $\nu=mX$ and right $A$-action
$\mu= mX\circ A\psi$. We conclude that $AX\in \ACA$, and we define $H(X,\psi)=AX$.\\
Let $f:\ X\to Y$ in ${}^!\Tc^{\#}_A$, and define $H(f)=mY\circ Af$. It is clear that $Hf$ is left $A$-linear, and the
right $A$-linearity follows from \equref{2.1.0}. $H:\ {}^!\Tc^{\#}_A(X,Y)\to \ACA(AX,AY)$ is bijective. The inverse
of $\un{f}:\ AX\to AY$ is $H^{-1}(\un{f})=\un{f}\circ \eta f$. Thus $H$ is a fully faithful functor; the monoidal structure is
the following: $\varphi_0=A:\ A\to H(\un{1})=A$, and $\varphi_2(X,Y):\ AX\bullet AY\to AXY$ is the unique isomorphism
of coequalizers, see \prref{bim1}. 
\end{proof}

In fact, if we make the identification of coequalizers $(AX\bullet AY,q)= (AXY, \mu_{AX}Y)$, then $H$ becomes strictly monoidal.
It follows immediately from \leref{6.1} that there exists a bijective correspondence between
coalgebra structures on $(X,\psi)\in {}^!\Tc^{\#}_A$ and $A$-coring structures on $AX$. Moreover, Frobenius systems on
$(A,\psi) $ correspond bijectively to Frobenius systems on $AX$, and the following result follows.

\begin{theorem}\thlabel{coFrobTvscoring}
With notation and assumptions as above, $(X,\psi)$ is a Frobenius coalgebra in $\Tc_A^\#$
if and only if $\mfC=AX$ is a Frobenius $A$-coring, i.e. a Frobenius coalgebra in $\ACA$.
\end{theorem}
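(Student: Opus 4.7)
The plan is to derive the theorem directly from \leref{6.1}. Since $H:\ {}^!\Tc^{\#}_A\to \ACA$ is a fully faithful strong monoidal functor, and the notion of a Frobenius coalgebra (\deref{defcoFrobCoalgMon}) is formulated purely in terms of the monoidal structure --- a Frobenius element $t:\un{1}\to C$, a Casimir morphism $B:CC\to \un{1}$, and the two equations in \equref{coFrobCoalg} --- any such $H$ should transport Frobenius coalgebra structures and Frobenius systems bijectively.

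I would execute this in two steps. First, record the bijection at the level of coalgebra structures: by \equref{roots3}, a comultiplication and counit $(\delta,\epsilon)$ on $(X,\psi)\in {}^!\Tc^{\#}_A$ correspond to an $A$-coring structure $(\Delta_{\mfC},\varepsilon_{\mfC})$ on $\mfC=AX$ via $\Delta_{\mfC}=\varphi_2(X,X)^{-1}\circ H(\delta)$ and $\varepsilon_{\mfC}=\varphi_0^{-1}\circ H(\epsilon)$, and this correspondence is bijective (as was noted right after \leref{6.1}). Fix such a pair throughout.

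Second, transfer Frobenius systems. For the forward direction, given a Frobenius system $(t,B)$ on $(X,\psi)$, set
\[
t_{\mfC}=H(t)\circ \varphi_0^{-1}\quad\text{and}\quad B_{\mfC}=\varphi_0\circ H(B)\circ \varphi_2(X,X).
\]
Applying $H$ to the two equations of \equref{coFrobCoalg} for $(t,B)$ and inserting the coherence isomorphisms of the strong monoidal structure yields the corresponding axioms for $(t_{\mfC},B_{\mfC})$ in $\ACA$. For the reverse direction, full faithfulness of $H$ provides unique morphisms $t\in \Tc_A^\#(\un{1},X)$ and $B\in \Tc_A^\#(XX,\un{1})$ such that $H(t)=t_{\mfC}\circ\varphi_0$ and $H(B)=\varphi_0^{-1}\circ B_{\mfC}\circ\varphi_2(X,X)^{-1}$; the Frobenius axioms in $\Tc_A^\#$ then follow by applying $H^{-1}$ to the corresponding axioms in $\ACA$, using that $H$ reflects equality of morphisms.

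The only technical nuisance is the bookkeeping with $\varphi_0$ and $\varphi_2$, and this is essentially trivialized by the remark immediately following \leref{6.1}: under the identification $(AX\bullet AY,q)=(AXY,\mu_{AX}Y)$, the functor $H$ becomes strictly monoidal. With this identification in place, the bijection between Frobenius systems on $(X,\psi)$ and on $\mfC=AX$ is immediate, since each defining equation in one category is obtained from the corresponding one in the other by the bijection of hom-sets afforded by full faithfulness.
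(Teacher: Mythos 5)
Your proposal is correct and follows essentially the same route as the paper: the paper's own proof of \thref{coFrobTvscoring} consists precisely of invoking \leref{6.1} to transport coalgebra structures and Frobenius systems bijectively along the fully faithful strong monoidal functor $H$, which is what you spell out in detail. The bookkeeping with $\varphi_0$ and $\varphi_2$ (and its trivialization via the strictification remark after \leref{6.1}) is handled correctly.
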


Combining Theorems \ref{th:FrobGenEntwMod} and \ref{th:coFrobTvscoring}, we obtain the following result.

\begin{corollary}\colabel{6.2}
Assume that $\un{1}$ is a left $\ot$-generator for $\Cc$. Let $A$ be a left coflat algebra in $\Cc$, and take
$(X,\psi)\in {}^!\Tc^{\#}_A$. Then the following assertions are equivalent:
\begin{itemize}
\item[(i)] The forgetful functor $U: \Cc^{AX}\ra \Cc_A$ is Frobenius;
\item[(ii)] $AX$ is a Frobenius $A$-coring.
\end{itemize}
\end{corollary}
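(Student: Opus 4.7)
The plan is to derive the corollary by chaining \thref{FrobGenEntwMod} and \thref{coFrobTvscoring}. Because $\un{1}$ is a left $\ot$-generator, \thref{FrobGenEntwMod} states that the forgetful functor $F: \Cc(\psi)_A^X \to \Cc_A$ is Frobenius if and only if $(X,\psi)$ is a Frobenius coalgebra in $\Tc_A^\#$; and \thref{coFrobTvscoring} asserts that the latter is equivalent to $\mfC = AX = H(X,\psi)$ being a Frobenius $A$-coring. It therefore suffices to produce an isomorphism of categories $\Phi: \Cc^{AX} \to \Cc(\psi)_A^X$ that commutes with the two forgetful functors to $\Cc_A$; once this is in place, Frobeniusness transfers from $F$ to $U$.

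First I would construct $\Phi$ using \leref{6.1} and the results of \seref{bimod}. Given $M \in \Cc_A$, the robustness of $AX$ as a left $A$-module (which holds by \prref{bim3}, since $AX$ lies in $\ACA$) yields a canonical identification $M \ota AX \cong MX$ via $\mu X : MAX \to MX$. Under this identification, a right $AX$-comodule structure $\rho_\mfC: M \to M \ota AX$ in $\Cc_A$ transports to a morphism $\rho: M \to MX$, which is automatically right $A$-linear in the sense of \equref{c3}. The counit axiom for $\rho_\mfC$, combined with $\varepsilon_\mfC = H(\epsilon)$ supplied by the strong monoidal structure of $H$, translates to \equref{c2}. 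The coassociativity axiom, combined with $\Delta_\mfC = \varphi_2^{-1}(X,X) \circ H(\delta)$ and the identifications $(M \ota AX) \ota AX \cong M \ota (AX \ota AX) \cong MXX$ provided by \prref{bim6} and robustness, translates to \equref{c1}. In the reverse direction, an entwined coaction $\rho: M \to MX$ transports back to $M \to M \ota AX$ and defines an $AX$-comodule; the constructions are mutually inverse, and both leave the underlying right $A$-module unchanged, so they intertwine $U$ and $F$.

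The main obstacle will be the bookkeeping in the second step: one has to check that the two triple-coaction diagrams match under all the identifications $M \ota AX \cong MX$, $(M \ota AX) \ota AX \cong MXX$ and $H(XX) \cong AX \ota AX$ (the last coming from the strong monoidal structure $\varphi_2$ of $H$), and that $AX$-colinearity of a morphism coincides with the entwined-morphism condition $\rho \circ f = fX \circ \rho$. Once the category isomorphism $\Phi$ is established, the chain (i) $\Leftrightarrow$ ``$F$ is Frobenius'' $\Leftrightarrow$ ``$(X,\psi)$ is Frobenius in $\Tc_A^\#$'' $\Leftrightarrow$ (ii) delivers the desired equivalence.
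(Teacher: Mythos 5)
Your proposal is correct and follows essentially the same route as the paper: the paper also deduces the corollary by passing through the isomorphism of categories $\Cc^{AX}\cong \Cc(\psi)_A^X$ (compatible with the forgetful functors) and then chaining \thref{FrobGenEntwMod} with \thref{coFrobTvscoring}. The only difference is that the paper simply quotes \cite[Theorem 4.8]{bc4} for that category isomorphism, whereas you sketch a direct construction of it from \leref{6.1} and the bimodule machinery of \seref{bimod}, which is a legitimate (if more laborious) way to supply the same ingredient.
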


\begin{proof}
From \cite[Theorem 4.8]{bc4} we know that the categories $\Cc^\mfC$ and $\Cc(\psi)_A^X$ are isomorphic. 
Thus $U$ is a Frobenius functor if and only if the forgetful functor $F: \Cc(\psi)_A^X\ra \Cc_A$ 
is Frobenius. Since $\un{1}$ is a left $\ot$-generator for $\Cc$, it follows from \thref{FrobGenEntwMod} that 
$F$ is Frobenius if and only if $(X, \psi)$ is a Frobenius coalgebra in ${\cal T}_A^\#$, and this is equivalent
to $AX$ being a Frobenius $A$-coring, by \thref{coFrobTvscoring}.
\end{proof}

Another immediate corollary of \leref{6.1} is the following. If $(X,\psi)$ has a right adjoint in ${}^!\Tc^{\#}_A$,
then $AX$ has a right adjoint in $\ACA$. In particular, if $(X,\psi)$ is a Frobenius coalgebra in ${}^!\Tc^{\#}_A$,
then it is selfadjoint, and $AX$ is selfadjoint in $\ACA$. \prref{6.3} is a result of the same type, but with a more
complicated proof. 
Take $(X,\psi)\in {}^!\Tc^{\#}_A$. If $X$ has a right adjoint $Y$ in $\Cc$, then $(Y,\varphi)\in {}^{\#}_A\Tc$,
and $YA$ is an $A$-bimodule.

\begin{proposition}\prlabel{6.3}
With notation as above, we assume that $YA\in \ACA$. Then $AX\dashv YA$ in $\ACA$.
\end{proposition}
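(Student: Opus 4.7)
The plan is to construct explicit unit and counit for the adjunction $AX\dashv YA$ in $\ACA$ from the adjunction data $b:\un{1}\to YX$ and $d:XY\to \un{1}$ of $X\dashv Y$ in $\Cc$. First I would invoke \prref{bim1} with $X'=YA$ and $M=X$ to identify $YA\ota AX=YAX$ via the coequalizer map $q=YmX$; unwinding \prref{bim4}, the induced $A$-bimodule structure on $YAX$ is $\nu_{YAX}=YmX\circ \varphi AX$ and $\mu_{YAX}=YmX\circ YA\psi$. With this in hand I would define
\[
\tilde b:=Y\psi\circ bA:\ A\longrightarrow YA\ota AX=YAX,
\]
and define the counit $\tilde d:AX\ota YA\to A$ as the unique morphism satisfying $\tilde d\circ q_0=m\circ AdA$, where $q_0:AXYA\to AX\ota YA$ is the canonical coequalizer map.

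Several verifications are needed. First, for $\tilde d$ to be well-defined, $m\circ AdA$ must equalize the parallel morphisms $\mu_{AX}YA$ and $AX\nu_{YA}$ from $AXAYA$ to $AXYA$: after peeling off the outer $A$'s via associativity of $m$, this reduces to the identity $Ad\circ \psi Y=dA\circ X\varphi$ as morphisms $XAY\to A$, a direct consequence of the formula $\varphi=YAd\circ Y\psi Y\circ bAY$ of \thref{roots} combined with $dX\circ Xb=X$. Second, the $A$-bilinearity of $\tilde d$ is the associativity of $m$ (checked at the level of $Aq_0$ and $q_0A$); the right $A$-linearity of $\tilde b$ is precisely the hexagon $(\ref{eq:ta}.a)$ for $\psi$, while its left $A$-linearity will require expanding $\varphi$ by its defining formula and then collapsing a sum of the form $\sum_i d(x^{(1)},y_i)\,\psi(x_i,a)=\psi(x^{(1)},a)$ using $dX\circ Xb=X$. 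Finally, using $\psi\circ X\eta=\eta X$ from $(\ref{eq:ta}.b)$ one has $\tilde b\circ\eta=Y\eta X\circ b:\un{1}\to YAX$, and since $\tilde b$ is left $A$-linear it is completely determined by this value; the two triangle identities for $AX\dashv YA$ then reduce, after tracing through the unit isomorphisms $\lambda,\rho$ of $\ACA$ and the identification $YA\ota AX=YAX$, directly to $dX\circ Xb=X$ and $Yd\circ bY=Y$.

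The hard part will be the bookkeeping of the bimodule structure on $AX\ota YA$, which---unlike $YA\ota AX$---does not admit a direct identification with a simple tensor product in $\Cc$; one has to work with the coequalizer $q_0$ throughout and verify each axiom after pulling back by $q_0$, $Aq_0$ or $q_0A$ as appropriate. Once these transported formulas are in place, every axiom of the adjunction becomes an instance either of the transfer-morphism axioms $(\ref{eq:ta})$ or of the triangle identities for $X\dashv Y$ in $\Cc$.
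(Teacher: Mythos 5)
Your construction coincides with the paper's: the unit is $B=Y\psi\circ bA$ and the counit is the morphism $D$ induced on the coequalizer $AX\ota YA$ by $m\circ AdA$, with well-definedness resting on the identity $Ad\circ \psi Y=dA\circ X\varphi$ and the first triangle identity collapsing to $dX\circ Xb=X$, exactly as you predict. The only minor inaccuracy is that the second triangle identity does not invoke $Yd\circ bY=Y$ but falls out of the definition of $\varphi$ and of the left action $\nu_{YA}=Ym\circ \varphi A$; otherwise the remaining work is precisely the coequalizer bookkeeping you flag, which the paper carries out via the diagrams establishing $\delta\circ q=mX\circ AdAX$ and its mirror image.
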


\begin{proof}
We have an adjunction $(X,Y,b,d)$ in $\Cc$. Recall that $b:\ \un{1}\to YX$, $d:\ XY\to \un{1}$.
$$B= Y\psi\circ bA:\ A\to YAX=YA\bullet AX~~{\rm and}~~m\circ AdA: AXYA\to A$$
are morphisms of $A$-bimodules. Let $D$ be the unique morphism of $A$-bimodules 
that makes the following diagram commutative:
\begin{equation}\eqlabel{6.3.1}
\xymatrix{
AXAYA\ar@<-.5ex>[rr]_(.54){\mu YA} 
\ar@<.5ex>[rr]^(.55){AX\nu}&&
AXYA\ar[d]_{AdA}\ar[rr]^q&&
AX\bullet YA\ar[d]^{\exists ! D}\\
&&AA\ar[rr]^m&&A}
\end{equation}
Applying Propositions \ref{pr:bim1} and \ref{pr:bim6}, we have isomorphisms of colimits
\begin{equation}\eqlabel{6.3.2}
(AX\bullet YA\bullet AX,q_2)\cong (AX\bullet YAX,q\circ AXYmX)\cong ((AX\bullet YA)X,\mu_{AX\bullet YA}\circ qAX).
\end{equation}
Consider the diagram
\begin{equation}\eqlabel{6.3.3}
\xymatrix{
&&AXA\ar[d]_{AXB}\ar[rr]^{q}&& AX\bullet A\ar[d]^{AX\bullet B}\\
AXYAAX\ar[d]_{=}\ar[rr]^{AXYmX}&&AXYAX\ar[rr]^q&&AX \bullet YAX\ar[d]^{\cong}\ar@{->}@/^4pc/[dd]^{\delta}\\
AXYAAX\ar[d]_{AdAAY}\ar[rr]^{qAX} &&(AX\bullet YA)AX\ar[d]_{DAX}\ar[rr]^{\mu X}&&(AX\bullet YA)X\ar[d]^{D\bullet AX}\\
AAAX\ar[rr]^{mAX}&&AAX\ar[rr]^{mX}&&AX}
\end{equation}
The commutativity of top and bottom right squares follows from \equref{tensor}; the commutativity of the
rectangle in the middle follows from \equref{6.3.2}; the commutativity of the bottom left square follows from the definition of $D$.
We conclude that the diagram commutes. Let $\delta$ be the composition of $D\bullet AX$ and the isomorphism
$AX\bullet YAX\cong (AX\bullet YA)X$, as indicated in the diagram, and consider the diagram
\begin{equation}\eqlabel{6.3.4}
\xymatrix{
AXYAAX\ar[d]_{AdAAX}\ar[rr]^{AXYmX}&&AXYAX\ar[d]^{Ad'AX}\ar[rr]^q&&AX\bullet YAX\ar[d]^{\delta}\\
AAAX\ar[rr]^{AmX}&&AAX\ar[rr]^{mX}&&AX}
\end{equation}
$$
\delta\circ q\circ AXYmX\equal{\equref{6.3.3}} mX\circ mAX\circ Ad AAY
= mX\circ AmX\circ Ad AAY=mX\circ AdAX\circ AXYmY.
$$
We used the associativity of $m$ and the fact that the left square in \equref{6.3.4} commutes. It follows that
$\delta\circ q=mX\circ AdAX$, since $(AXYAX,AXYmX)$ is a coequalizer, see \prref{bim1}. We conclude that the
diagram \equref{6.3.4} commutes. We therefore have commutative diagrams
$$\xymatrix{
AXA\ar[d]_{AXB}\ar[rr]^q&& AX\bullet A\ar[d]^{AX\bullet B}\\
AXYAX\ar[d]_{AdAX}\ar[rr]^q&&AX\bullet YAX\ar[d]^{\delta}\\
AAX\ar[rr]^{mX}&&AX}\hspace*{1cm}{\rm and}\hspace*{1cm}
\xymatrix{AXA\ar[drr]_{\mu_{AX}}\ar[rr]^q&& AX\bullet A\ar[d]^{\cong}\\
&&AX}$$
Now
$$
mX\circ AdAX\circ AXB= mX\circ AdAX\circ AXY\psi\circ AXbA
= mX\circ A\psi\circ AdXA\circ AXbA=\mu_{AX}.
$$
Since $(AX\bullet A,q)$ is a coequalizer, we conclude that $\delta\circ (AX\bullet B)$ is the canonical isomorphism
$AX\bullet A\cong AX$, as needed.\\
In a similar way, the diagram commutes
$$\xymatrix{
&&AYA\ar[d]_{BYA}\ar[rr]^q&&A\bullet YA\ar[d]^{B\bullet YA}\\
YAAXYA\ar[d]_=\ar[rr]^{YmXYA}&&YAXYA\ar[rr]^q&&YAX\bullet YA\ar[d]^{\cong}\ar@{->}@/^4pc/[dd]^{\delta'}\\
YAAXYA\ar[d]_{YAAdA}\ar[rr]^{YAq}&&YA(AX\bullet YA)\ar[d]_{YAD}\ar[rr]^{Y\nu}&&Y(AX\bullet YA)\ar[d]_{YA\bullet D}\\
YAAA\ar[rr]^{YAm}&&YAA\ar[rr]^{Ym}&&YA}$$
Then we consider the diagram
$$\xymatrix{
YAAXYA\ar[d]_{YAAdA}\ar[rr]^{YmXYA}&&YAXYA\ar[d]^{YAdA}\ar[rr]^q&&YAX\bullet YA\ar[d]^{\delta}\\
YAAA\ar[rr]^{YmA}&&YAA\ar[rr]^{Ym}&&YA}$$
We compute that
$$\delta\circ q\circ YmXYA=Ym\circ YAm\circ YAAdA=Ym\circ YmA\circ YAAdA=Ym\circ YAdA\circ YmXYA,$$
and conclude that $\delta\circ q=Ym\circ YAdA$, so that we have the commutative diagrams
$$\xymatrix{
AYA\ar[d]_{BYA}\ar[rr]^q&& A\bullet YA\ar[d]^{B\bullet YA}\\
YAXYA\ar[d]_{YAdA}\ar[rr]^q&& YAX\bullet YA\ar[d]^{\delta'}\\
YAA\ar[rr]^{Ym}&&YA}\hspace*{1cm}{\rm and}\hspace*{1cm}
\xymatrix{AYA\ar[drr]_{\nu_{YA}}\ar[rr]^q&& A\bullet YA\ar[d]^{\cong}\\
&&YA}$$
Now
$$Ym\circ YAdA\circ BYA=Ym\circ YAdA\circ Y\psi YA\circ bAYA=\nu_{YA},$$
and we conclude that $\delta' \circ B\bullet YA$ is the canonical isomorphism $Y\bullet YA\cong YA$,
finishing the proof.
\end{proof}

In the setting of \prref{6.3}, assume moreover that $(X,\psi)$ is a coalgebra in ${}^!\Tc^{\#}_A$.
It follows from \leref{6.1} that $AX$ is an $A$-coring; we know from \prref{6.3} that $AX\dashv YA$ in $\ACA$,
hence $YA$ is an $A$-ring, with structure given by \equref{dualalgstrofcoalg}. The structure maps are
denoted as $m_{YA}:\ YA\bullet YA=YYA\to YA$ and $\eta_A:\ A\to YA$.\\
We have seen in \prref{duality(co)wreaths} that $(A,Y,\varphi)$ is a right wreath, and we can consider
the wreath product $YA=Y\#_{\varphi}A$, which is an algebra in $\Cc$. As in \prref{duality(co)wreaths},
the multiplication and the unit are denoted as $m_{\#}:YAYA\to YA$ and $\eta_{\#}:\ 1\to YA$.\\
$AX$ is an entwined module, and therefore a right $YA$-module in $\Cc$, with right $YA$-action
$\ov{\mu}:\ AXYA\to AX$ given by \equref{algstr}. Since $AX\dashv YA$ in $\ACA$, $AX$ is also a right $YA$-module
in $\ACA$, with right $YA$-action $\tilde{\mu}:\ AX\bullet YA\to AX$ given by \equref{modstrcoFrob}.\\
In \prref{6.4} we investigate the relation between these two sets of structures.

\begin{proposition}\prlabel{6.4}
With notation as above, $m_{\#}=m_{YA}\circ Y\nu_{YA}$, $\eta_{\#}=\eta_{YA}\circ \eta$ and $\ov{\mu}=\tilde{\mu}\circ q$.
\end{proposition}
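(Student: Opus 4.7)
The plan is to derive the three identities from the general formulas \equref{dualalgstrofcoalg} and \equref{modstrcoFrob}, applied in the monoidal category $\ACA$ to the adjunction $AX \dashv YA$ established in \prref{6.3}, and then match the result against the explicit wreath-product formulas \equref{3.4.1} and the entwined-module formula \equref{algstr}.

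The preliminary step is to record the unit $B$ and the counit $D$ of this adjunction in $\ACA$. From the proof of \prref{6.3}, the unit is $B = Y\psi \circ bA : A \to YA \bullet AX$, where $YA \bullet AX$ is identified with $YAX$ via \prref{bim1}, and the counit $D : AX \bullet YA \to A$ is characterized by $D \circ q = m \circ AdA$. Via the strong monoidal functor of \leref{6.1}, the comultiplication $\Delta_{AX}$ of the $A$-coring $AX$ corresponds to $\delta$, and its counit $\varepsilon_{AX}$ corresponds to $m \circ A\epsilon$.

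I would handle the three identities in increasing order of bookkeeping difficulty. First, $\bar\mu = \tilde\mu \circ q$ follows directly from the definition $\tilde\mu = (AX \bullet D) \circ (\Delta_{AX} \bullet YA)$ given by \equref{modstrcoFrob}: pre-composing with the coequalizer projection $q : AXYA \to AX \bullet YA$ and expanding $\Delta_{AX}$ and $D$ in terms of $\delta$, $\epsilon$, $d$ and $m$ reproduces the formula \equref{algstr}. Second, for $\eta_\# = \eta_{YA} \circ \eta$: starting from $\eta_{YA} = (YA \bullet \varepsilon_{AX}) \circ B$ coming from \equref{dualalgstrofcoalg}, composing with $\eta$, substituting the explicit expressions for $B$ and $\varepsilon_{AX}$, and applying the unit axiom of the right transfer morphism \equref{ta} together with the adjunction identities \equref{evcoev}, one collapses to $Y\epsilon \circ b$, which is the formula for $\eta_\#$ given in \equref{3.4.1}. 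Third, for $m_\# = m_{YA} \circ Y\nu_{YA}$: one writes
\[
m_{YA} = (YA \bullet D \bullet D) \circ (YA \bullet \Delta_{AX} \bullet YA \bullet YA) \circ (B \bullet YA \bullet YA)
\]
via \equref{dualalgstrofcoalg}, unpacks each $\bullet$ using the coequalizer identifications of Propositions \ref{pr:bim1} and \ref{pr:bim6}, and verifies by a direct diagrammatic calculation that the resulting morphism, composed with $Y\nu_{YA}$, agrees with the formula for $m_\#$ in \equref{3.4.1}.

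The main obstacle throughout is the careful tracking of the iterated tensor products over $A$ and their coequalizer presentations, and the correct interpretation of $\bullet$-tensor products of morphisms as morphisms in $\Cc$ via the universal property. No deep conceptual difficulty arises beyond tools already developed, but the diagrammatic bookkeeping is substantial, particularly in the third identity where one must work with a threefold $\bullet$-product.
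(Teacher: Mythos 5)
Your proposal follows essentially the same route as the paper: all three identities are obtained by instantiating \equref{dualalgstrofcoalg} and \equref{modstrcoFrob} for the adjunction $AX\dashv YA$ in $\ACA$, unpacking the $\bullet$-products via the coequalizer identifications of Propositions \ref{pr:bim1} and \ref{pr:bim6}, and matching the results against \equref{3.4.1} and \equref{algstr}, which is exactly how the paper proceeds (its treatment of the third identity likewise reduces to a lengthy diagram chase after precomposing with the relevant coequalizer epimorphisms). The only point to correct is notational: the map you write as $YA\bullet D\bullet D$ should be $YA\bullet D^2$ with $D^2=D\circ(AX\bullet D\bullet YA)$ the counit of the composite adjunction, as \equref{dualalgstrofcoalg} prescribes.
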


\begin{proof}
According to \equref{dualalgstrofcoalg}, 
$m_{YA}=YA\bullet D^2 \circo Y\Delta\bullet YYA\circo B\bullet YYA$, where
$\Delta=mX\circ A\delta:\ AX\to AX\bullet AX=AXX$ is the comultiplication on $AX$.
Consider the commutative diagram
$$\xymatrix{
AYYA\ar[d]_{BYYA}\ar[rr]^{\nu_{YYA}}&& YYA=A\bullet YYA\ar[d]^{B\bullet YYA}\\
YAXYYA\ar[d]_{Y\Delta YYA}\ar[rr]^q&&YAX\bullet YYA\ar[d]^{Y\Delta\bullet YYA}\\
YAXXYYA\ar[d]_{YAd^2A}\ar[rr]^q&& YAXX\bullet YYA\ar[d]^{YA\bullet D^2}\\
YAA\ar[rr]^{Ym}&&YA\bullet A=YA}$$
$m_{YA}$ is the unique morphism such that $m_{YA}\circ \nu_{YYA}=f$, where $f$ is southwestern composition
$f=Ym\circ YAd^2A\circ Y\Delta YYA \circ BYYA$
in the diagram. We have to show that the triangle in the diagram
$$\xymatrix{
AYAYA\ar[d]_{\nu_{YA}YA}\ar[rr]^{AY\nu_{YA}}&&AYYA\ar[d]^{\nu_{YYA}}\ar@{->}@/^3pc/[dd]^{f}\\
YAYA\ar[rr]^{\nu_{YA}}\ar[drr]_{m_{\#}}&& YYA\ar[d]^{m_{YA}}\\
&&YA}$$
commutes. Since $(YAYA, \nu_{YA}YA)$ is a coequalizer, and since the rectangle in the diagram commutes,
it suffices to show that
\begin{equation}\eqlabel{6.4.1}
m_{\#}\circ \nu_{YA}YA= f\circ AY\nu_{YA}.
\end{equation}
The fact that $\delta:\ X\to AXX$ defines a morphism in $\Tc_A^{\#}$ is expressed by the formula
$$mXX\circ A\delta \circ \psi = mXX\circ A\psi X\circ AX\psi\circ \delta A=\mu_{AAX}\circ \delta A.$$
Using this formula and the definition of $B$, we can show that $f\circ AY\nu_{YA}=g_1\circ g$, with
\begin{eqnarray*}
g_1&=& Ym\circo YAd^2A\circo Y\mu_{AXX}Y\nu_{YA}:\ YAXXAYAYA\to YA;\\
g&=& Y\delta AYAYA\circo b AYAYA:\ AYAY\to YAXXAYAYA.
\end{eqnarray*}
Using \equref{3.4.1}, we compute that $m_{\#}\circo \nu_{YA}YA=g_2\circo g$, with
$$g_2=Ym^2\circo YAAdA \circo Y\psi YA\circo YAXdAYA\circo YAXX\nu_{YA}YA:\ YAXXAYAYA\to YA.$$
Therefore it suffices to show that $g_1=g_2$. To this end, we consider the following diagram.
We slightly simplified the notation for the morphisms in the diagram, deleting identity morphisms on tensor factors;
for example, $\psi$ in the top left corner is a shorter notation for $YAX\psi YAYA$.
It follows from \equref{6.4.2} that the top left square in the diagram commutes. We easily deduce from \equref{6.4.2}
that $dA\circ X\nu_{YA}=m\circ AdA =\psi YA$, telling us that the lower pentagon in the diagram commutes. The
associativity of $m$ entails that the lower right square commutes. The commutativity of the three remaining squares,
the remaining pentangle, and the octangle at the lower left is an obvious consequence of the property that
$Cg\circ fB=fD\circ Ag$ for $f:\ A\to C$ and $g:\ B\to D$. Our conclusion is that the diagram commutes. The composition
taking $YAXXAYAYA$ in the top left corner to $YA$ via the southwestern route is $g_1$, and the composition via the
northeastern route is $g_2$. It follows that $g_1=g_2$ and \equref{6.4.1} is satisfied.
$$\xymatrix{
YAXXAYAYA\ar[d]_{\psi}\ar[r]^{\varphi}&
YAXXYAAYA\ar[d]^{d}\ar[r]^{m}&
YAXXYAYA\ar[d]^{d}&\\
YAXAXYAYA\ar[d]_{\psi}\ar[r]^{d}&
YAXAAYA\ar[d]^{\psi}\ar[r]^{m}&
YAXAYA\ar[dr]^{\psi}&\\
YAAXXYAYA\ar[d]_{m}\ar[r]^{d}&
YAAXAYA\ar[d]^{\nu_{YA}}\ar[r]^{\psi}&
YAAAXYA\ar[d]^{d}\ar[r]^{Am}&
YAAXYA\ar[d]^{d}\\
YAAXXYAYA\ar[dr]^{\nu_{YA}}&
YAAXYA\ar[dr]^{dA}&
YAAAA\ar[d]^{YAAm}\ar[r]^{YAmA}&
YAAA\ar[d]^{Ym^2}\\
&YAXXYYA\ar[dr]^{d^2}&
YAAA\ar[r]^{Ym^2}&YA\\
&&YAA\ar[ur]^{Ym}&}$$
 From the definition of $\varphi$,
see \equref{psidual}, it easily follows that
\begin{equation}\eqlabel{6.4.2}
Ad\circ \psi Y=dA\circ X\varphi.
\end{equation}
Let us prove the second formula. As an application of \equref{bim1a.1}, we find that $YA\bullet \varepsilon_{AX}=
Y\varepsilon_{AX}:\ YA\bullet AX=YAX\to YA\bullet A=YA$.
\begin{eqnarray*}
&&\hspace*{-2cm}
\eta_{YA}\circ \eta~\equal{\equref{dualalgstrofcoalg}}~YA\bullet AX \circ B\circ \eta = Y\varepsilon_{AX} \circ Y\psi\circ bA\circ \eta
= Ym\circ YA\epsilon \circ Y\psi\circ bA\circ \eta\\
&\equal{(\ref{eq:pdf}(c))}& Ym\circ Y\epsilon A\circ bA\circ \eta
= Ym\circ YA\eta\circ Y\epsilon \circ b~\equal{\equref{3.4.1}}~ \eta_\#.
\end{eqnarray*}

It follows from \equref{modstrcoFrob} that $\tilde{\mu}=AX\bullet D\circo \Delta\bullet YA$. This fits into the diagram
$$\xymatrix{
AXYA\ar[d]_{\Delta}\ar[rr]^q&&
AX\bullet YA\ar[d]^{\Delta\bullet YA}\\
AXXYA\ar[d]_{AXDA}\ar[rr]^q&&
AXX\bullet YA\cong AX\bullet AX\bullet YA\ar[d]^{AX\bullet D}\\
AXA\ar[rr]^{\mu_{AX}}&&AX}$$
It follows that $\tilde{\mu}\circ q= \mu_{AX}\circ AXdA\circ D=\ov{\mu}$.
\end{proof}

From now on we will make the following assumptions: $\Cc$ is a (strict) monoidal category with coequalizers;
every object of $\Cc$ is left coflat; $A$ is an algebra in $\Cc$
such that every left $A$-module in $\Cc$ is robust. In this situation the categories $\ACA$ and ${}_A\Cc_A$ coincide.\\
An algebra morphism $A\to S$ is also called an algebra extension; algebra extensions correspond to $A$-rings,
these are algebras in the category ${}_A\Cc_A$. $A\to S$ is called a Frobenius algebra extension if $S$ is
a Frobenius algebra in ${}_A\Cc_A$. Now we can state the main result of this Section.

\begin{theorem}\thlabel{Frobcaractfinitecase}
Let $(A, X, \psi)$ be a cowreath, and assume that $X\dashv Y$ in $\Cc$.
Then the following assertions are equivalent:
\begin{itemize}
\item[(i)] $(X, \psi)$ is a Frobenius coalgebra in ${\cal T}^\#_A$;
\item[(ii)] $YA$ is a Frobenius $A$-ring;
\item[(iii)] $(Y, \varphi)$ is a Frobenius algebra in ${}_A^\#{\cal T}$;
\item[(iv)] the algebra extension $Ym\circ \eta_{YA}A:\ A\to YA$ is Frobenius;
\item[(v)] $AX$ and $YA$ are isomorphic as left $A$, right $YA$-modules in $\Cc$.
\item[(vi)] $AX$ and $YA$ are isomorphic as left $A$-modules and as entwined modules;
\item[(vii)] there exists  $t: \un{1}\ra X$ in ${\cal T}_A^\#$ (that is a Frobenius 
element for $(X, \psi)$ in ${\cal T}_A^\#$) such that 
\begin{equation}\eqlabel{Frobcar1}
\Phi=m^2X\circ AA\psi A\circ AAXdA\circ A\delta YA\circ tYA:\ YA\to AX
\end{equation}
is an isomorphism in $\Cc$;
\item[(viii)] there exists $B: X\ot X\ra \un{1}$ in ${\cal T}_A^\#$ satisfying (\ref{eq:coFrobCoTsh}.c) 
(that is a Casimir morphism for $(X, \psi)$ in ${\cal T}_A^\#$) such that
\begin{equation}\eqlabel{Frobcar2}
\Psi=Ym\circ YAB\circ Y\psi X\circ bAX:\ AX\to YA
\end{equation}
is an isomorphism in $\Cc$.
\end{itemize}
If $\un{1}$ is a left $\ot$-generator of $\Cc$, then these statements are equivalent to
\begin{itemize}
\item[(ix)] The functor $F:\ \Cc(\psi)_A^X\ra \Cc_A$ is a Frobenius functor. 
\end{itemize}
\end{theorem}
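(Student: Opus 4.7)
The plan is to bundle the nine assertions into a few groups that can each be reduced to earlier results, and then derive the explicit formulas in (vii) and (viii) from the abstract characterizations of Frobenius coalgebras recalled in \reref{equivcoFrob}.

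First I would dispatch the easy bundles. The equivalence (i)$\Leftrightarrow$(iii) follows directly from \thref{roots}: the functor ${}^*(-):\ \Tc_A^{\#}\to {}_A^{\#}\Tc^{\rm oprev}$ is a strong monoidal equivalence (after fixing right duals), and Frobenius structures are preserved by such equivalences, so a coalgebra in $\Tc_A^{\#}$ is Frobenius iff its image algebra in ${}_A^{\#}\Tc$ is Frobenius. Next, (ii)$\Leftrightarrow$(iv) is essentially a matter of unpacking definitions: under the standing hypothesis that every left $A$-module is robust, $\ACA$ and ${}_A\Cc_A$ coincide, so an $A$-ring is the same thing as an algebra extension $A\to YA$, and the two Frobenius notions match. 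Finally, (i)$\Leftrightarrow$(ix) under the $\ot$-generator hypothesis is exactly \thref{FrobGenEntwMod}.

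For the central cluster (i)$\Leftrightarrow$(ii)$\Leftrightarrow$(v)$\Leftrightarrow$(vi), I would proceed as follows. By \leref{6.1} and \thref{coFrobTvscoring}, (i) is equivalent to $AX$ being a Frobenius $A$-coring. From \prref{6.3} we have the adjunction $AX\dashv YA$ in $\ACA$, and \equref{dualalgstrofcoalg} endows $YA$ with the $A$-ring structure computed explicitly in \prref{6.4}. Now \reref{equivcoFrob}(i) tells us that an $A$-coring with right adjoint $YA$ is Frobenius iff $AX\cong YA$ as right $YA$-modules in $\ACA$, i.e.\ as left-$A$, right-$YA$-bimodules. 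This gives (i)$\Leftrightarrow$(ii)$\Leftrightarrow$(v). For (v)$\Leftrightarrow$(vi), I would invoke \thref{gntasmod}, which furnishes an isomorphism of categories $\Cc(\psi)_A^X\cong \Cc_{YA}$ compatible with the $\Cc_A$-forgetful structure; right $YA$-module maps therefore correspond to morphisms of entwined modules that are additionally left $A$-linear, and (v) and (vi) express the same isomorphism in two equivalent languages.

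The remaining step, and the one I expect to be the most technical, is (i)$\Leftrightarrow$(vii) and (i)$\Leftrightarrow$(viii). The idea is to apply \reref{equivcoFrob}(iv)--(v) to $\mfC=AX$ with adjoint algebra $YA$: an adjunction is present by \prref{6.3}, and the Frobenius property is equivalent to the existence of a balanced right (resp.\ left) non-degenerate pairing $B_r,\ B_l:\ \mfC\bullet \mfC\to A$. I would translate such pairings, via the fully faithful embedding $H$ of \leref{6.1} and the adjunction $X\dashv Y$, into Casimir morphisms $B:\ XX\to A$ in $\Tc_A^{\#}$ and Frobenius elements $t:\ \un 1\to X$ in $\Tc_A^{\#}$. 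Unwinding the non-degeneracy isomorphisms $\Phi_{B_r}=B_r\bullet YA\circ\mfC\bullet b_{\ACA}$ and $\Psi_{B_l}=YA\bullet B_l\circ b'_{\ACA}\bullet \mfC$ in $\ACA$, and reducing them along the identifications $AX\bullet YA\cong AXYA$, etc., the main computation is to show that these abstract isomorphisms coincide with the concrete maps $\Phi$ and $\Psi$ of \equref{Frobcar1} and \equref{Frobcar2}; this is essentially a bookkeeping exercise with the explicit unit and counit morphisms computed in \prref{duality(co)wreaths} and with the coring comultiplication $\Delta=mX\circ A\delta$, but it is where the heavy diagrammatic work lies. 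The $\Leftarrow$ direction in each case is automatic: given $t$ (resp.\ $B$) with $\Phi$ (resp.\ $\Psi$) invertible, one recovers a balanced non-degenerate pairing on $AX$, hence a Frobenius structure.
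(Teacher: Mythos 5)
Much of your outline coincides with the paper's own route: (i)$\Leftrightarrow$(ii)$\Leftrightarrow$(v) via \thref{coFrobTvscoring}, the adjunction $AX\dashv YA$ of \prref{6.3}, \reref{equivcoFrob} and the structure comparison of \prref{6.4}; (v)$\Leftrightarrow$(vi) via \thref{gntasmod}; and (i)$\Leftrightarrow$(ix) via \thref{FrobGenEntwMod}. Where you deviate on (iii)--(iv) you are on thinner ice than you suggest: the paper gets (ii)$\Leftrightarrow$(iii)$\Leftrightarrow$(iv) from \cite[Corollary 8.8]{dbbt2} applied to the wreath $(A,Y,\varphi)$, whereas you invoke \thref{roots} as a ``strong monoidal equivalence''. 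That theorem only produces a strong monoidal \emph{functor}, and only under the hypothesis that $\Cc$ has right duality, which is not assumed here (only $X\dashv Y$ is). Your argument can be repaired by a hom-level mate correspondence between $\Tc_A^\#(\un{1},X)$, $\Tc_A^\#(XX,\un{1})$ and the corresponding hom-sets for $(Y,\varphi)$, but as written it appeals to facts not available; similarly, (ii)$\Leftrightarrow$(iv) is not pure bookkeeping: it needs the identification $m_\#=m_{YA}\circ Y\nu_{YA}$, $\eta_\#=\eta_{YA}\circ\eta$ of \prref{6.4} (or the citation to \cite{dbbt2}).

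The genuine gap is in your plan for (vii). The non-degeneracy morphisms of \reref{equivcoFrob}(iv)--(v), namely $\Phi_{B_r}$ and $\Psi_{B_l}$, both have source the coring $\mfC=AX$ and target its adjoint $YA$, and both are parametrized by a Casimir-type pairing $\mfC\bullet\mfC\to A$. Unwinding them can only produce maps of the shape $\Psi:AX\to YA$ as in \equref{Frobcar2}, i.e. condition (viii); they cannot ``coincide with'' the map $\Phi$ of \equref{Frobcar1}, which goes in the opposite direction $YA\to AX$ and is parametrized by a Frobenius element $t:\un{1}\to X$ in $\Tc_A^\#$, not by a pairing. So the step ``show that these abstract isomorphisms coincide with the concrete maps $\Phi$ and $\Psi$'' fails for $\Phi$, and the claimed ``automatic'' converse (recovering a balanced non-degenerate pairing from $t$ with $\Phi$ invertible) is exactly what is not automatic. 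What is missing is the paper's key observation proving (v)$\Leftrightarrow$(vii): a right $YA$-linear morphism $YA\to AX$ is completely determined by its composite with $\eta_\#$ (using the explicit right action $\ov{\mu}$ from \thref{gntasmod} and \prref{6.4}), which forces it to have the form \equref{Frobcar1}, and a diagrammatic computation then shows that left $A$-linearity of $\Phi$ is equivalent to $t$ being a morphism in $\Tc_A^\#$. A symmetric verification is also needed for (viii) --- that every left $A$-linear $\Psi:AX\to YA$ has the form \equref{Frobcar2} (via \leref{bim1a}) and that its right $YA$-linearity is equivalent to $B$ satisfying (\ref{eq:coFrobCoTsh}.c) --- which your appeal to \reref{equivcoFrob}(iv)--(v) sketches more plausibly but still leaves to the same kind of computation.
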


\begin{proof}
\un{$(i)\Leftrightarrow (ii)$.} By \thref{coFrobTvscoring}, $(X, \psi)$ is a Frobenius coalgebra in ${\cal T}_A^\#$ 
if and only if $AX$ is a Frobenius coalgebra in ${}_A\Cc_A$. Since $AX\dashv YA$ in ${}_A\Cc_A$,
this is equivalent to $YA$ is a Frobenius algebra in ${}_A\Cc_A$, which is a Frobenius $A$-ring.\\
\un{$(ii)\Leftrightarrow (iii)\Leftrightarrow (iv)$} follows from \cite[Corollary 8.8]{dbbt2} applied to the 
wreath $(A, Y, \varphi)$ in $\ov{\Cc}$. 
Observe that it is possible to give a direct proof of the equivalence of $(ii)$ and $(iii)$ using 
arguments similar to the ones  in the proof of \thref{coFrobTvscoring}. \\
\un{$(i)\Leftrightarrow (v)$.} We use the characterization (i) in \reref{equivcoFrob}, applied to the colgebra $AX$ 
in ${}_A\Cc_A$. We have that $(X, \psi)$ in ${\cal T}_A^\#$ is a Frobenius coalgebra 
if and only if $AX$ is a Frobenius $A$-coring. Since $AX\dashv YA$ in ${}_A\Cc_A$, this is
equivalent to $AX$ and $YA$ being isomorphic as right $YA$-modules 
in ${}_A\Cc_A$. By \prref{6.4} it then follows that $(X, \psi)$ is a Frobenius coalgebra 
if and only if $AX$ and $YA$ are isomorphic as $A$-bimodules and right 
$YA$-modules in $\Cc$. But the right $A$-module structure on 
$AX$ and $YA$ is inherited from the right action of $YA$ 
on them via the restriction of scalars functor defined by the algebra extension 
$Ym\circ \eta_{YA}A:\ A\to YA$. 
So we conclude that $(X, \psi)$ is a Frobenius coalgebra in ${\cal T}_A^\#$ if and only if 
$AX$ and $Y A$ are isomorphic as left $A$, right $YA$-modules.\\
\un{$(v)\Leftrightarrow (vi)$} is an immediate consequence of \thref{gntasmod}.\\
\un{$(v)\Leftrightarrow (vii)$} is based on the elementary observation that a right 
$YA$-linear morphism $\Phi: YA\to AX$ is completely determined 
by the morphism $t=\Phi\circ \eta_{\#}:\ \un{1}\to AX$  in $\Cc$. Looking at the right $A$-action
$\ov{\mu}:\ AXYA\to AX$ (see the proof of \thref{gntasmod} and \prref{6.4}), we easily find that $\Phi$
is given by \equref{Frobcar1}. Since 
\[{\footnotesize
\gbeg{9}{11}
\gvac{6}\got{1}{A}\got{1}{Y}\got{1}{A}\gnl
\gvac{6}\gcl{1}\gcl{1}\gcl{1}\gnl
\gsbox{2}\gnot{\hspace{5mm}t}\gvac{4}\gdb\gvac{0}\gcl{1}\gcl{1}\gcl{1}\gnl
\gcl{1}\gcn{1}{1}{1}{3}\gvac{2}\gcl{1}\gbrc\gcl{1}\gcl{1}\gnl
\gcl{1}\gsbox{3}\gvac{3}\gcl{1}\gcl{1}\gev\gvac{0}\gcl{1}\gnl
\gmu\gcl{1}\gev\gvac{0}\gcn{1}{1}{1}{3}\gvac{1}\gcn{1}{1}{3}{1}\gnl
\gcn{1}{3}{2}{7}\gvac{1}\gcn{1}{2}{1}{5}\gvac{3}\gmu\gnl
\gvac{6}\gcn{1}{1}{2}{-1}\gnl
\gvac{4}\gbrc\gnl
\gvac{3}\gmu\gcl{1}\gnl
\gvac{3}\gob{2}{A}\gob{1}{X}
\gend}
\equalupdown{(\ref{eq:evcoev})}{\equuref{ta}{a}}
{\footnotesize
\gbeg{7}{13}
\gvac{4}\got{1}{A}\got{1}{Y}\got{1}{A}\gnl
\gvac{4}\gcl{4}\gcl{4}\gcl{4}\gnl
\gsbox{2}\gnot{\hspace{5mm}t}\gnl
\gcl{1}\gcn{1}{1}{1}{3}\gnl
\gcl{1}\gsbox{3}\gnl
\gcl{1}\gcl{1}\gcl{1}\gbrc\gcl{1}\gcl{2}\gnl
\gcl{1}\gcl{1}\gbrc\gev\gnl
\gcl{1}\gmu\gcl{1}\gvac{2}\gcn{1}{1}{1}{-3}\gnl
\gcl{1}\gcn{1}{1}{2}{1}\gvac{1}\gbrc\gnl
\gmu\gvac{1}\gcl{1}\gcl{3}\gnl
\gcn{1}{1}{2}{3}\gvac{1}\gcn{1}{1}{3}{1}\gnl
\gvac{1}\gmu\gnl
\gvac{1}\gob{2}{A}\gvac{1}\gob{1}{X}
\gend}
\equal{\equuref{pdf}{a}}
{\footnotesize
\gbeg{7}{11}
\gvac{2}\got{1}{A}\gvac{2}\got{1}{Y}\got{1}{A}\gnl
\gvac{2}\gcl{2}\gvac{2}\gcl{5}\gcl{6}\gnl
\gsbox{2}\gnot{\hspace{5mm}t}\gnl
\gcl{1}\gbrc\gnl
\gcl{1}\gcl{1}\gcn{1}{1}{1}{3}\gnl
\gmu\gsbox{3}\gnl
\gcn{1}{1}{2}{3}\gvac{1}\gcl{1}\gcl{1}\gev\gnl
\gvac{1}\gmu\gcn{1}{1}{1}{3}\gvac{1}\gcn{1}{1}{3}{1}\gnl
\gvac{1}\gcn{1}{1}{2}{5}\gvac{2}\gbrc\gnl
\gvac{3}\gmu\gcl{1}\gnl
\gvac{3}\gob{2}{A}\gob{1}{X}
\gend}
\]
we obtain that $\Phi$ is left $A$-linear if and only if $t$ is a morphism in ${\cal T}_A^\#$.\\
\un{$(v)\Leftrightarrow (viii)$.} If $\Psi:\ AX\to YA$ is left $A$-linear, then
\[{\footnotesize
\gbeg{3}{6}
\got{1}{A}\got{1}{A}\got{1}{X}\gnl
\gmu\gcl{2}\gnl
\gcn{1}{1}{2}{3}\gnl
\gvac{1}\gsbox{2}\gnot{\hspace{5mm}\Psi}\gnl
\gvac{1}\gcl{1}\gcl{1}\gnl
\gvac{1}\gob{1}{Y}\gob{1}{A}
\gend
}=
{\footnotesize
\gbeg{5}{8}
\gvac{2}\got{1}{A}\got{1}{A}\got{1}{X}\gnl
\gvac{2}\gcl{2}\gcl{1}\gcl{1}\gnl
\gdb\gvac{1}\gsbox{2}\gnot{\hspace{5mm}\Psi}\gnl
\gcl{1}\gbrc\gcl{1}\gcl{2}\gnl
\gcl{1}\gcl{1}\gev\gnl
\gcl{1}\gcn{1}{1}{1}{3}\gvac{1}\gcn{1}{1}{3}{1}\gnl
\gcl{1}\gvac{1}\gmu\gnl
\gob{1}{Y}\gvac{1}\gob{2}{A}
\gend
}~,~{\rm so}~
\Psi={\footnotesize
\gbeg{5}{8}
\gvac{2}\got{1}{A}\gvac{1}\got{1}{X}\gnl
\gvac{2}\gcl{1}\gu{1}\gcl{1}\gnl
\gdb\gvac{0}\gcl{1}\gsbox{2}\gnot{\hspace{5mm}\Psi}\gnl
\gcl{1}\gbrc\gcl{1}\gcl{1}\gnl
\gcl{1}\gcl{1}\gev\gvac{0}\gcl{1}\gnl
\gcl{1}\gcn{1}{1}{1}{3}\gvac{1}\gcn{1}{1}{3}{1}\gnl
\gcl{1}\gvac{1}\gmu\gnl
\gob{1}{Y}\gvac{1}\gob{2}{A}
\gend}=
{\footnotesize
\gbeg{4}{8}
\gvac{2}\got{1}{A}\got{1}{X}\gnl
\gvac{2}\gcl{1}\gcl{1}\gnl
\gdb\gvac{0}\gcl{1}\gcl{1}\gnl
\gcl{1}\gbrc\gcl{1}\gnl
\gcl{1}\gcl{1}\gsbox{2}\gnot{\hspace{5mm}B}\gnl
\gcl{1}\gcl{1}\gcn{1}{1}{2}{1}\gnl
\gcl{1}\gmu\gnl
\gob{1}{Y}\gob{2}{A}
\gend
}~,~{\rm where}~
B={\footnotesize
\gbeg{3}{5}
\got{1}{X}\gvac{1}\got{1}{X}\gnl
\gcl{1}\gu{1}\gcl{1}\gnl
\gcl{1}\gsbox{2}\gnot{\hspace{5mm}\Psi}\gnl
\gev\gvac{0}\gcl{1}\gnl
\gvac{2}\gob{1}{A}
\gend
}~.
\]
This shows that any left $A$-linear morphism $\Psi:\ AX\to YA$ is of the form \equref{Frobcar2}, for some
$B:\ X\ot X\ra A$ in $\Cc$. A long but straightforward computation shows that $\Psi$ is right $YA$-linear
if and only if  $B$ is a morphism in 
${\cal T}_A^\#$ satisfying (\ref{eq:coFrobCoTsh}.c).\\
\un{$(i)\Leftrightarrow (ix)$} follows from \thref{FrobGenEntwMod}.
\end{proof}

\section{Separability properties for entwined modules}\selabel{6}
\setcounter{equation}{0}
The aim of this Section is to study the separability of the forgetful functor $F:\ \Cc(\psi)_A^X\ra \Cc_A$ and its
right adjoint $G$. Separable functors were introduced in \cite{nbo}. Consider a pair of adjoint functors $F\dashv G$
between two categories $\Dc$ and ${\cal E}$, with unit $\eta: \Id_\Dc\ra GF$ and 
counit $\varepsilon: FG\ra \Id_{\cal E}$. The following result is due to Rafael \cite{rafael}.
\begin{itemize}
\item $F$ is separable if and only if the unit $\eta$ of the adjunction splits: 
there is a natural transformation $\vartheta:\ GF\rightarrow \Id_\Dc$ such that $\vartheta\circ \eta= \Id_\Dc$;
\item $G$ is separable if and only if the counit $\varepsilon$ of the adjunction cosplits: there is a natural transformation 
$\theta:\ \Id_{\cal E}\rightarrow FG$
such that $\varepsilon\circ \theta=\Id_{\cal E}$.
\end{itemize}
We will apply Rafael's Theorem to $F:\ \Cc(\psi)_A^X\ra \Cc_A$ and its right adjoint $G$. The natural transformations
$\vartheta$ and $\theta$ will be obtained as an application of Propositions \ref{pr:FrobElem} and \ref{pr:CasMor}.

\begin{proposition}\prlabel{prSepCarFunct}
Assume that $\un{1}$ is a left $\ot$-generator of the (strict) monoidal category $\Cc$, and let $(A,X,\psi)$ be a cowreath.
\begin{enumerate}
\item The forgetful functor $F:\ \Cc(\psi)_A^X\ra \Cc_A$ is separable if and only if there exists a Casimir morphism
$B:\ XX\to A$ for the coalgebra $(X,\psi)$ in $ \Tc_A^{\#}$ such that
$m\circ AB\circ \delta=\epsilon$.
\item $G:\ \Cc_A\to \Cc(\psi)_A^X$ is separable if and only if there exists a morphism $t:\ \un{1}\to X$ in ${\cal T}_A^\#$
such that $m\circ A\epsilon \circ t=\eta$.
\end{enumerate}
\end{proposition}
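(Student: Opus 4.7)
The proof rests on Rafael's theorem: $F$ is separable iff the unit of the adjunction $F\dashv G$ from \leref{rightadj}, which at an entwined module $M$ is the coaction $\rho_M$, admits a natural left inverse $\vartheta:\ GF\Rightarrow \Id_{\Cc(\psi)_A^X}$; dually, $G$ is separable iff the counit $\varepsilon_N=\mu\circ N\epsilon$ admits a natural right inverse $\theta:\ \Id_{\Cc_A}\Rightarrow FG$. Since $\un{1}$ is a left $\ot$-generator, Propositions \ref{pr:CasMor} and \ref{pr:FrobElem} identify such natural transformations with Casimir morphisms $B:\ XX\to A$ and with morphisms $t:\ \un{1}\to X$ in $\Tc_A^\#$, via $\vartheta_M=\mu\circ MB\circ \rho X$ and $\theta_N=\mu X\circ Nt$. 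The task is to translate each splitting identity into the announced algebraic condition.

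For part $(2)$, using the interchange identity $N\epsilon\circ \mu X=\mu A\circ NA\epsilon$ and the associativity of the right $A$-action, I compute
\[
\varepsilon_N\circ \theta_N=\mu\circ N\epsilon\circ \mu X\circ Nt=\mu\circ Nm\circ NA\epsilon\circ Nt=\mu\circ N(m\circ A\epsilon\circ t).
\]
Thus the cosplitting condition $\varepsilon_N\circ \theta_N=\Id_N$ for all $N\in \Cc_A$ is equivalent to $\mu\circ N(m\circ A\epsilon\circ t)=\Id_N$ for every $N$. The forward implication follows from the unit axiom of a right $A$-module. For the converse, apply the identity at $N=A$ (where $\mu=m$), precompose with $\eta:\ \un{1}\to A$, and use $m\circ \eta A=\Id_A$ to extract $m\circ A\epsilon\circ t=\eta$.

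For part $(1)$, coassociativity of the coaction \equref{c1}, the interchange identity $MB\circ \mu XX=\mu A\circ MAB$, and the associativity of $\mu$ together rewrite
\[
\vartheta_M\circ \rho_M=\mu\circ MB\circ \rho X\circ \rho=\mu\circ M(m\circ AB\circ \delta)\circ \rho_M.
\]
Setting $F'=m\circ AB\circ \delta:\ X\to A$, the splitting condition becomes $\mu\circ MF'\circ \rho_M=\Id_M$ for every entwined module $M$; the forward direction, assuming $F'=\epsilon$, is precisely \equref{c2}. For the converse, I specialize to the free entwined module $M=G(A)=AX$ of \leref{rightadj}, whose structure is $\mu_{AX}=mX\circ A\psi$ and $\rho_{AX}=mXX\circ A\delta$. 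A direct rearrangement using the associativity of $m$ gives
\[
\vartheta_{AX}\circ \rho_{AX}=mX\circ A\Xi(F'),\qquad \Xi(f):=mX\circ A\psi\circ AXf\circ \delta,
\]
and precomposing the resulting identity $mX\circ A\Xi(F')=\Id_{AX}$ with $\eta X$ extracts $\Xi(F')=\eta X$. Since (e) of \equref{pdf} says exactly $\Xi(\epsilon)=\eta X$, postcomposing $\Xi(F')=\eta X$ with $m\circ A\epsilon:\ AX\to A$ and simplifying via (c) of \equref{pdf}, the Casimir condition \equuref{coFrobCoTsh}{c} on $B$, and the $\ot$-generator property, forces $F'=\epsilon$.

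The hard step is this last one of part $(1)$: promoting $\Xi(F')=\eta X$ to the pointwise equality $F'=\epsilon$ of morphisms $X\to A$. The argument hinges on the fact that the $\ot$-generator property allows one to test equality of morphisms $X\to A$ against arbitrary $h:\ \un{1}\to X$, and that the two counit conditions (d) and (e) of \equref{pdf}, together with (c) of \equref{pdf} and the Casimir condition on $B$, eliminate enough ambiguity that the weaker identity of $\Xi$-images becomes the full identity of the underlying morphisms.
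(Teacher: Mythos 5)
Your proof is correct and follows the same route as the paper: Rafael's criterion combined with Propositions \ref{pr:FrobElem} and \ref{pr:CasMor}, reduction of the splitting identities to $\mu\circ Mh\circ\rho=\Id_M$ (resp. $\mu\circ N(m\circ A\epsilon\circ t)=\Id_N$) with $h=m\circ AB\circ\delta$, and extraction of $h=\epsilon$ by evaluating at the free entwined module $AX$, precomposing with $\eta X$ and postcomposing with $m\circ A\epsilon$. One correction to your closing remark, though: the step from $\Xi(F')=\eta X$ to $F'=\epsilon$ requires neither the $\ot$-generator property nor the Casimir condition (\ref{eq:coFrobCoTsh}.c) on $B$ --- one checks that $m\circ A\epsilon\circ \Xi(f)=f$ for \emph{every} $f:X\to A$ using only (\ref{eq:pdf}.c), (\ref{eq:pdf}.d), associativity and the interchange law, so the generator hypothesis is used solely at the earlier stage where $\vartheta$ and $\theta$ are identified with $B$ and $t$.
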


\begin{proof}
By Rafael's Theorem, $F$ is separable if and only if there exists $\vartheta:\ GF\rightarrow \Id_{\Cc(\psi)_A^X}$
such that $\vartheta\circ \eta$ is the identity natural transformation. Let $B$ be the Casimir morphism corresponding
to $\vartheta$, see \prref{CasMor}. Then we can easily show that
$\vartheta_M\circ \eta_M=\mu\circ MB\circ \rho X\circ \rho=\mu\circ Mh\circ \rho$,
where $h=m\circ AB\circ \delta$.\\
If $h=\epsilon$, then it follows that $\vartheta_M\circ \eta_M=\mu\circ M\epsilon \circ \rho\equal{\equref{c2}}M$.\\
Conversely, if $\theta\circ \eta$ is the identity natural transformation, then 
$$AX=\theta_{AX}\circ \eta_{AX}=\mu_{AX}\circ AXh\circ \rho_{AX}=mX\circ A\psi\circ AXh\circ mXX\circ A\delta,$$
and
\begin{eqnarray*}
\epsilon&\equal{\equref{epsilon}}&m\circ A\epsilon\circ \eta X=m\circ A\epsilon\circ AX \circ \eta X
= m^2\circ AA\epsilon\circ A\psi\circ AXh\circ mXX\circ A\delta\circ \eta X\\
&\equal{(\ref{eq:pdf}.c)}&
m^2\circ A\epsilon A\circ AXh\circ mXX\circ A\delta\circ \eta X
= m\circ Ah\circ mX\circ AmX\circ AA\epsilon X\circ A\delta\circ \eta X\\
&\equal{(\ref{eq:pdf}.d)}&
m\circ Ah\circ mX\circ A\eta X\circ \eta X=m\circ Ah\circ \eta X=m\circ \eta A\circ h=h.
\end{eqnarray*}
The proof of the second statement is similar. $G$ is separable if and only if there exists 
$\theta:\ \Id_{\Cc_A}\rightarrow FG$
such that $\varepsilon\circ \theta$ is the identity natural transformation, that is, $\varepsilon_N\circ \theta_N=N$, for all $N\in \Cc_A$.
Fix $\theta$, and let $t\in \Tc^{\#}_A(\un{1}, X)$ be the Frobenius element corresponding to $\theta$, see \prref{FrobElem}.
Then $\varepsilon_N\circ \theta_N=\mu\circ N\epsilon\circ \mu X\circ Nt$, fitting into the commutative diagram
$$\xymatrix{
N\ar[rr]^{Nt}&&
NAX\ar[rr]^{\mu X}\ar[d]_{NA\epsilon}&&
NX\ar[d]^{N\epsilon}\\
&&NAA\ar[d]_{Nm}\ar[rr]^{\mu A}&&NA\ar[d]^{\mu}\\
&&NA\ar[rr]^{\mu}&&N}$$
If $m\circ A\epsilon \circ t=\eta$, then $\varepsilon_N\circ \theta_N=\mu\circ N\eta=N$.\\
Conversely, if $\varepsilon\circ \theta=\Id_{\Cc_A}$, then $\varepsilon_A\circ \theta_A=A$, and we find from the
commutativity of the diagram that $\eta= m\circ Am\circ AA\epsilon\circ At\circ \eta=m\circ mA\circ \eta AA\circ A\epsilon\circ t= m\circ A\epsilon \circ t$.
\end{proof}

Coseparable coalgebras were introduced by Larson in \cite{larson}. This notion can be generalized to
coalgebras in (strict) monoidal categories. Remark that a coalgebra $C$ is a $C$-bicomodule, with
left and right $C$-coaction induced by comultiplication.

\begin{definition}\delabel{7.3}
A coalgebra $C$ is coseparable if it is a relative injective $C$-bicomodule in $\Cc$, which comes down
to the following property. If $i:\ M\to N$ in ${}^C\Cc^C$ has a left inverse $p:\ M\to N$ in $\Cc$, then 
every $f: M\ra C$ in ${}^C\Cc^C$ factors through $i$ in ${}^C\Cc^C$: there exists a 
$C$-bicolinear morphism $g: N\ra C$ such that $g\circ i=f$.   
\end{definition}

\begin{proposition}\prlabel{cosepcoalgmoncat}
For a coalgebra $C$ in a (strict) monoidal category $\Cc$, the following assertions are equivalent.
\begin{enumerate}
\item[(i)] $C$ is coseparable;
\item[(ii)] the comultiplication $\Delta$ has a $C$-bicolinear left inverse $\gamma:\ CC\to C$;
\item[(iii)] there exists a morphism $B:\ CC\to \un{1}$ in $\Cc$ such that
\begin{equation}\eqlabel{cosepeq2}
B\circ \Delta=\varepsilon~~{\rm and}~~CB\circ \Delta C=BC\circ C\Delta.
\end{equation}
\end{enumerate}
\end{proposition}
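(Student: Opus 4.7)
The plan is to prove the implications $(i) \Rightarrow (ii) \Rightarrow (iii) \Rightarrow (ii) \Rightarrow (i)$. The guiding observation throughout is the cofree--forgetful adjunction $U \dashv \mathrm{CoInd}$ between $\Cc$ and ${}^C\Cc^C$, with $\mathrm{CoInd}(V) = CVC$: its unit at a bicomodule $N$ is the bicoaction $\delta^N : N \to CNC$, its counit at $V \in \Cc$ is $\varepsilon V \varepsilon$, and under this adjunction the bicolinear map $\Delta : C \to CC$ (where $CC$ carries the outer bicomodule structure $\Delta C$, $C\Delta$) corresponds to $\varepsilon : C \to \un{1}$.

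For $(i) \Rightarrow (ii)$, I would apply \deref{7.3} with $i = \Delta : C \to CC$, which is bicolinear by coassociativity, with left inverse $p = C\varepsilon : CC \to C$ in $\Cc$ (right counit axiom), and with $f = \Id_C$, which is clearly bicolinear. The resulting bicolinear factorization $g : CC \to C$ with $g \circ \Delta = \Id_C$ is exactly the retraction $\gamma$ required in (ii).

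For $(ii) \Leftrightarrow (iii)$, set $B = \varepsilon \circ \gamma : CC \to \un{1}$. Post-composing the left-colinearity identity $\Delta \circ \gamma = C\gamma \circ \Delta C$ with $C\varepsilon$ collapses the left-hand side to $\gamma$ via the right counit axiom and the right-hand side to $CB \circ \Delta C$; the right-colinearity identity $\Delta \circ \gamma = \gamma C \circ C\Delta$ is treated symmetrically by composing with $\varepsilon C$ to yield $\gamma = BC \circ C\Delta$. Equating these two expressions for $\gamma$ is precisely the Casimir condition \equref{cosepeq2}, and $B \circ \Delta = \varepsilon$ is immediate from $\gamma \circ \Delta = \Id_C$. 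Conversely, given $B$ satisfying \equref{cosepeq2}, I would define $\gamma$ to be the common value $CB \circ \Delta C = BC \circ C\Delta$; then $\gamma \circ \Delta = CB \circ \Delta C \circ \Delta = CB \circ C\Delta \circ \Delta = C(B \circ \Delta) \circ \Delta = C\varepsilon \circ \Delta = \Id_C$ by coassociativity and the counit axiom, and the two colinearities of $\gamma$ follow by analogous diagram-chases that again reduce to coassociativity of $\Delta$.

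For $(ii) \Rightarrow (i)$, given $\gamma$ and data $i, p, f$ as in \deref{7.3}, I would put $\tilde g = \varepsilon \circ f \circ p : N \to \un{1}$ and define the extension $g = \gamma \circ C\tilde g C \circ \delta^N : N \to C$. Bicolinearity of $g$ is clear, because $C\tilde g C \circ \delta^N : N \to CC$ is the bicolinear adjoint of $\tilde g$ under the cofree--forgetful adjunction, and $\gamma$ is bicolinear by hypothesis. For $g \circ i = f$, naturality of $\delta$ along the bicolinear morphism $i$ gives $\delta^N \circ i = CiC \circ \delta^M$, so that $g \circ i = \gamma \circ C(\tilde g \circ i)C \circ \delta^M = \gamma \circ C\tilde f C \circ \delta^M$ with $\tilde f = \varepsilon \circ f$; the identity $C\tilde f C \circ \delta^M = \Delta \circ f$, obtained by using left-colinearity of $f$ to collapse the left outer factor and right-colinearity on the right outer factor, together with the counit axiom, then yields $g \circ i = \gamma \circ \Delta \circ f = f$. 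The main technical obstacle will be this last identity $C\tilde f C \circ \delta^M = \Delta \circ f$, which is the concrete content of the fact that $\Delta$ corresponds to $\varepsilon$ under the cofree adjunction; once it is in place, the rest of $(ii) \Rightarrow (i)$ becomes essentially formal.
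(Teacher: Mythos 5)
Your proposal is correct and follows essentially the same route as the paper's (sketched) proof: the same choices $p=C\varepsilon$, $B=\varepsilon\circ\gamma$, $\gamma=CB\circ\Delta C=BC\circ C\Delta$, and the same extension $g=\gamma\circ C(\varepsilon\circ f\circ p)C\circ\delta^N$ for $(ii)\Rightarrow(i)$. You merely spell out the verifications (the counit post-compositions and the identity $C(\varepsilon\circ f)C\circ\delta^M=\Delta\circ f$) that the paper leaves to the reader via the reference to Larson's Lemma~1.
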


\begin{proof} We just sketch it since is similar to the one of \cite[Lemma 1]{larson}.

\un{$(i)\Rightarrow (ii)$}. $C\varepsilon:\ CC\to C$ is a left inverse of $\Delta$, so the identity $C\to C$ factors through
$\Delta$ in ${}^C\Cc^C$, which means that $\Delta$ has a $C$-bicolinear left inverse.\\
\un{$(ii)\Rightarrow (iii)$}. Let $B=\varepsilon\circ \gamma:\ CC\to \un{1}$. $B\circ \Delta = \varepsilon$ follows immediately
from $\gamma\circ \Delta=C$. The left $C$-colinearity of $\gamma$ means that $C\gamma\circ \Delta C=\Delta\circ \gamma$,
and this implies that $CB\circ \Delta C=C\varepsilon \circ C\gamma\circ \Delta C=C\varepsilon \circ \Delta\circ \gamma=\gamma$.
The right $C$-colinearity entails that $BC\circ C\delta =\gamma$, and the second formula in \equref{cosepeq2} follows.\\
\un{$(iii)\Rightarrow (ii)$}. $\gamma= CB\circ \Delta C=BC\circ C\Delta$ is a $C$-bicolinear left inverse of $\Delta$.\\
\un{$(ii)\Rightarrow (i)$}. Let $i,~p,~f$ be as in \deref{7.3}. Then $g=\gamma\circ C(\varepsilon\circ f\circ p)C\circ
CN\rho\circ \lambda$ is $C$-bicolinear and $g\circ i=f$. $\lambda$ and $\rho$ are the left and right $C$-coaction on $N$.
\end{proof}

A morphism $B:\ CC\to \un{1}$ satisfying the second condition in \equref{cosepeq2} is a Casimir morphism for $C$,
see \deref{defcoFrobCoalgMon}. A Casimir morphism is called normalized if it also satisfies the first condition in
\equref{cosepeq2}. A coseparable coalgebra is coalgebra together with a normalized Casimir morphism.

\begin{proposition}\prlabel{cosepcoalgTs}
For a cowreath $(A,X, \psi)$ in $\Cc$, the following assertions are equivalent.
\begin{enumerate}
\item[(i)] $(X, \psi)$ is a coseparable coalgebra in ${\cal T}_A^\#$;
\item[(ii)] there exists a morphism $\gamma:\ XX\to AX$ in $\Cc$ such that
\begin{equation}\eqlabel{cosepcoalgTsh}
(a)~{\footnotesize
\gbeg{3}{6}
\got{1}{X}\got{1}{X}\got{1}{A}\gnl
\gcl{1}\gbrc\gnl
\gbrc\gcl{1}\gnl
\gcl{1}\gsbox{2}\gnot{\hspace{5mm}\gamma}\gnl
\gmu\gcl{1}\gnl
\gob{2}{A}\gob{1}{X}
\gend
}={\footnotesize
\gbeg{3}{6}
\got{1}{X}\got{1}{X}\got{1}{A}\gnl
\gcl{1}\gcl{1}\gcl{2}\gnl
\gsbox{2}\gnot{\hspace{5mm}\gamma}\gnl
\gcl{1}\gbrc\gnl
\gmu\gcl{1}\gnl
\gob{2}{A}\gob{1}{X}
\gend
}~,~(b)~
{\footnotesize
\gbeg{4}{7}
\got{1}{X}\gvac{1}\got{1}{X}\gnl
\gcl{1}\gvac{1}\gcl{1}\gnl
\gcl{1}\gsbox{3}\gnl
\gbrc\gcl{1}\gcl{3}\gnl
\gcl{1}\gsbox{2}\gnot{\hspace{5mm}\gamma}\gnl
\gmu\gcl{1}\gnl
\gob{2}{A}\gob{1}{X}\gob{1}{X}
\gend}=
{\footnotesize
\gbeg{4}{7}
\got{1}{X}\got{1}{X}\gnl
\gcl{1}\gcl{1}\gnl
\gsbox{2}\gnot{\hspace{5mm}\gamma}\gnl
\gcl{1}\gcn{1}{1}{1}{3}\gnl
\gcl{1}\gsbox{3}\gnl
\gmu\gcl{1}\gcl{1}\gnl
\gob{2}{A}\gob{1}{X}\gob{1}{X}
\gend}=
{\footnotesize
\gbeg{4}{8}
\gvac{1}\got{1}{X}\gvac{1}\got{1}{X}\gnl
\gvac{1}\gcl{1}\gvac{1}\gcl{3}\gnl
\gsbox{3}\gnl
\gcl{1}\gcl{1}\gcl{1}\gnl
\gcl{1}\gcl{1}\gsbox{2}\gnot{\hspace{5mm}\gamma}\gnl
\gcl{1}\gbrc\gcl{1}\gnl
\gmu\gcl{1}\gcl{1}\gnl
\gob{2}{A}\gob{1}{X}\gob{1}{X}
\gend}~,~(c)~
{\footnotesize
\gbeg{3}{7}
\gvac{1}\got{1}{X}\gnl
\gvac{1}\gcl{1}\gnl
\gsbox{3}\gnl
\gcl{1}\gcl{1}\gcl{1}\gnl
\gcl{1}\gsbox{2}\gnot{\hspace{5mm}\gamma}\gnl
\gmu\gcl{1}\gnl
\gob{2}{A}\gob{1}{X}
\gend}=
{\footnotesize
\gbeg{2}{3}
\gvac{1}\got{1}{X}\gnl
\gu{1}\gcl{1}\gnl
\gob{1}{A}\gob{1}{X}
\gend
}~.
\end{equation}
\item[(iii)] there exists a Casimir morphism $B$ for the coalgebra $(X, \psi)$ in ${\cal T}_A^\#$ such that
$m\circ AB\circ \delta=\epsilon$.
\end{enumerate}
If $X\dashv Y$ in $\Cc$, then these conditions are equivalent to
\begin{enumerate}
\item[(iv)] there exists a left $A$-linear $\Psi:\ AX\to YA$ in $\Cc(\psi)_A^X$ such that
\begin{equation}\eqlabel{normcondPsi}
m\circ AdA \circ AX\Psi\circ AX\eta X\circ \delta=\epsilon;
\end{equation}
\item[(v)] there exists a morphism $\ov{\Psi}: X\ra YA$ in $\Cc$ satisfying the equations:
\begin{equation}\label{sepcondofovPsi}
(a)~
{\footnotesize
\gbeg{4}{8}
\got{1}{X}\got{1}{X}\got{1}{A}\gnl
\gcl{1}\gbrc\gnl
\gbrc\gcn{1}{1}{1}{2}\gnl
\gcl{1}\gcl{1}\gsbox{2}\gnot{\hspace{5mm}\ov{\Psi}}\gnl
\gcl{1}\gev\gvac{0}\gcl{1}\gnl
\gcn{1}{1}{1}{3}\gvac{2}\gcn{1}{1}{1}{-1}\gnl
\gvac{1}\gmu\gnl
\gvac{1}\gob{2}{A}
\gend=
{\footnotesize
\gbeg{4}{5}
\got{1}{X}\got{2}{X}\got{1}{A}\gnl
\gcl{1}\gcn{1}{1}{2}{2}\gvac{1}\gcl{1}\gnl
\gcl{1}\gsbox{2}\gnot{\hspace{5mm}\ov{\Psi}}\gvac{2}\gcl{1}\gnl
\gev\gvac{0}\gmu\gnl
\gvac{2}\gob{2}{A}
\gend
}
}~,~(b)~
{\footnotesize
\gbeg{6}{9}
\gvac{1}\got{1}{X}\gvac{2}\got{2}{X}\gnl
\gvac{1}\gcl{1}\gvac{2}\gcn{1}{1}{2}{2}\gnl
\gsbox{3}\gvac{4}\gsbox{2}\gnot{\hspace{5mm}\ov{\Psi}}\gnl
\gcl{1}\gcl{1}\gcl{1}\gvac{1}\gcn{1}{1}{1}{-1}\gcl{1}\gnl
\gcl{1}\gcl{1}\gev\gvac{1}\gcl{1}\gnl
\gcl{1}\gcl{1}\gvac{3}\gcn{1}{1}{1}{-5}\gnl
\gcl{1}\gbrc\gnl
\gmu\gcl{1}\gnl
\gob{2}{A}\gob{1}{X}
\gend
}=
{\footnotesize
\gbeg{5}{9}
\got{1}{X}\gvac{1}\got{1}{X}\gnl
\gcl{1}\gvac{1}\gcl{1}\gnl
\gcl{1}\gsbox{3}\gnl
\gbrc\gcn{1}{1}{1}{2}\gcn{1}{1}{1}{3}\gnl
\gcl{1}\gcl{1}\gsbox{2}\gnot{\hspace{5mm}\ov{\Psi}}\gvac{2}\gcl{1}\gnl
\gcl{1}\gev\gvac{0}\gcl{1}\gcl{3}\gnl
\gcn{1}{1}{1}{3}\gvac{2}\gcn{1}{1}{1}{-1}\gnl
\gvac{1}\gmu\gnl
\gvac{1}\gob{2}{A}\gvac{1}\gob{1}{X}
\gend
}~,~(c)~
{\footnotesize
\gbeg{4}{9}
\gvac{1}\got{1}{X}\gnl
\gvac{1}\gcl{1}\gnl
\gsbox{3}\gnl
\gcl{1}\gcl{1}\gcn{1}{1}{1}{2}\gnl
\gcl{1}\gcl{1}\gsbox{2}\gnot{\hspace{5mm}\ov{\Psi}}\gnl
\gcl{1}\gev\gvac{0}\gcl{1}\gnl
\gcn{1}{1}{1}{3}\gvac{2}\gcn{1}{1}{1}{-1}\gnl
\gvac{1}\gmu\gnl
\gvac{1}\gob{2}{A}
\gend}
=\epsilon~.
\end{equation}
\end{enumerate}
\end{proposition}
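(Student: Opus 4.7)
The plan is to derive (i)--(iii) by specializing \prref{cosepcoalgmoncat} to the monoidal category $\Tc_A^\#$, and to derive (iii)--(iv)--(v) under the assumption $X\dashv Y$ from the bijective correspondences established in \thref{Frobcaractfinitecase}(viii) and \leref{bim1a}. For (i) $\Leftrightarrow$ (ii) $\Leftrightarrow$ (iii), apply \prref{cosepcoalgmoncat} to the coalgebra $(X, \psi)$ in $\Tc_A^\#$. Unpacking definitions, a morphism $XX \to X$ in $\Tc_A^\#$ is a morphism $\gamma: XX \to AX$ in $\Cc$ whose $\Tc_A^\#$-compatibility (cf.\ \equref{2.1.0}) is exactly \equuref{cosepcoalgTsh}{a}; its bicolinearity with respect to $\delta$ translates to \equuref{cosepcoalgTsh}{b}; and the left-inverse identity $\gamma \bullet \delta = \eta X$ (the identity of $(X, \psi)$ in $\Tc_A^\#$) unpacks as \equuref{cosepcoalgTsh}{c}. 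Similarly, a morphism $B: XX \to \un{1}$ in $\Tc_A^\#$ is a morphism $XX \to A$ in $\Cc$ satisfying \equuref{coFrobCoTsh}{b}; the Casimir condition becomes \equuref{coFrobCoTsh}{c}; and the normalization $B \bullet \delta = \epsilon$ in $\Tc_A^\#$ becomes $m \circ AB \circ \delta = \epsilon$ in $\Cc$.

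Under $X \dashv Y$, for (iii) $\Leftrightarrow$ (iv) I would invoke the bijection from the proof of \thref{Frobcaractfinitecase}(viii): Casimir morphisms $B$ satisfying \equuref{coFrobCoTsh}{b}--\equuref{coFrobCoTsh}{c} correspond to left $A$-linear and right $YA$-linear morphisms $\Psi: AX \to YA$ via $\Psi = Ym \circ YAB \circ Y\psi X \circ bAX$, and by \thref{gntasmod} being right $YA$-linear is the same as being a morphism in $\Cc(\psi)_A^X$. It then remains to show the two normalization conditions agree: the key step is to simplify $AdA \circ AX\Psi \circ AX\eta X$ using the zig-zag identity $dX \circ Xb = X$ and the unit axiom $\psi \circ X\eta = \eta X$, reducing it to $Am \circ AAB \circ A\psi X$; feeding in $\delta$ and using \equuref{pdf}{d} (or the unit axiom for $m$) then turns $m \circ AB \circ \delta = \epsilon$ into \equref{normcondPsi}. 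For (iv) $\Leftrightarrow$ (v), apply \leref{bim1a} with $M = X$ and $YA \in {}_A\Cc$ to obtain the bijection ${}_A\Cc(AX, YA) \cong \Cc(X, YA)$, $\Psi \leftrightarrow \ov{\Psi} = \Psi \circ \eta X$. The three remaining conditions on $\Psi$ translate as follows: right $A$-linearity of $\Psi$ becomes the first equation in \eqref{sepcondofovPsi}, right $X$-colinearity becomes the second, and the normalization \equref{normcondPsi} becomes the third; each translation is a routine diagram chase using the explicit entwined-module structure on $YA$ derived from \prref{duality(co)wreaths} and \thref{gntasmod}.

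The main obstacle I anticipate is the matching of normalization conditions in (iii) $\Leftrightarrow$ (iv). Because $\Psi$ is built from $b$, $\psi$, $B$ and the multiplication of $A$, the composite $m \circ AdA \circ AX\Psi \circ AX\eta X \circ \delta$ must be simplified by pulling $d$ past $\psi$ (using that they act on disjoint tensor factors) and then invoking the zig-zag identity together with $\psi \circ X\eta = \eta X$, before one can recognize it as $m \circ AB \circ \delta$. The remaining translations in (iv) $\Leftrightarrow$ (v) are purely computational once the right $A$-action and right $X$-coaction on $YA$ are correctly identified.
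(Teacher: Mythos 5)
Your proposal is correct and follows essentially the same route as the paper's proof: (i)$\Leftrightarrow$(ii)$\Leftrightarrow$(iii) by specializing \prref{cosepcoalgmoncat} to $\Tc_A^\#$, (iii)$\Leftrightarrow$(iv) via the correspondence $\Psi\leftrightarrow B$ established in the proof of (v)$\Leftrightarrow$(viii) of \thref{Frobcaractfinitecase} together with the check that the normalizations match, and (iv)$\Leftrightarrow$(v) via \leref{bim1a} and translation of the three conditions. The only blemish is your intermediate expression $Am\circ AAB\circ A\psi X$, which does not typecheck on $AXX$; the zig-zag identity and $\psi\circ X\eta=\eta X$ in fact reduce $AdA\circ AX\Psi\circ AX\eta X$ to $AB$, after which $m\circ AB\circ\delta=\epsilon$ is immediately equivalent to \equref{normcondPsi} -- but this does not affect the validity of your argument.
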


\begin{proof}
\un{$(i)\Leftrightarrow (ii)\Leftrightarrow (iii)$}. 
\equuref{cosepcoalgTsh}{a} says that $\gamma$ is a morphism in ${\cal T}_A^\#$, 
\equuref{cosepcoalgTsh}{b} that $\gamma$ is an $(X, \psi)$-bicolinear morphism in 
${\cal T}_A^\#$ and \equuref{cosepcoalgTsh}{c} that $\gamma$ is a left inverse of
 the comultiplication $\d$ of the coalgebra $(X, \psi)$ in ${\cal T}_A^\#$, so condition
 (ii) is condition (ii) from \prref{cosepcoalgmoncat} in the special case where
 $\Cc= {\cal T}_A^\#$. A similar observation holds for condition (iii), and the equivalence
 of (i), (ii) and (iii) follows.\\
 \un{$(iii)\Leftrightarrow (iv)$}. 
We have seen in the proof of the equivalence {$(v)\Leftrightarrow (viii)$} in 
\thref{Frobcaractfinitecase} that there is a bijective correspondence between
left $A$-linear morphisms $\Psi: A X\ra YA$ in 
$\Cc(\psi)_A^X$ and Casimir morphisms 
$B$ for the coalgebra $(X, \psi)$ in ${\cal T}_A^\#$. Moreover, it is easy to show that $\Psi$ satisfies 
\equref{normcondPsi} if and only if the corresponding $B$ has the property that
$m\circ AB\circ \delta=\epsilon$. \\
\un{$(iv)\Leftrightarrow (v)$}. It follows from \leref{bim1a} that we have an isomorphism
$\alpha:\ {}_A\Cc(AX,AY)\to \Cc(X,AY)$, given by $\alpha(\Psi)=\Psi\circ\eta A$ and
$\alpha^{-1}(\ov{\Psi})=\nu_{YA}\circ A\ov{\Psi}=Ym\circ YAdA\circ Y\psi AYA\circ bAYA\circ A\ov{\Psi}$.
It is left to the reader to check that
 $\Psi$ is a left $A$-linear morphism in $\Cc(\psi)_A^X$ if and only if $\alpha(\Psi)=\ov{\Psi}$ satisfies 
 (\ref{sepcondofovPsi}.a, b). Finally, \equref{normcondPsi} is equivalent to
(\ref{sepcondofovPsi}.c).
\end{proof}

Our next result is a generalization of \cite[Theorem 2.3]{cimz}.

\begin{theorem}\thlabel{ForFunsepvsSepcoal}
Assume that $\un{1}$ is a left $\ot$-generator for $\Cc$. For 
a cowreath $(A,X,\psi)$ in $\Cc$, the following statements are equivalent:
\begin{itemize}
\item[(i)] The forgetful functor $F:\ \Cc(\psi)_A^X\ra \Cc_A$ is separable;
\item[(ii)] $(X, \psi)$ is a coseparable coalgebra in ${\cal T}_A^\#$.
\end{itemize}
If $\Cc$ has coequalizers and $A$ and $X$ are left coflat in $\Cc$, these
statements are also equivalent to 
\begin{itemize}
\item[(iii)] $AX$ is a coseparable $A$-coring in $\Cc$, that is a coseparable coalgebra 
in the monoidal category $\ACA$;
\item[(iv)] the forgetful functor $U: \Cc^{AX}\ra \Cc_A$ is separable. 
\end{itemize}
\end{theorem}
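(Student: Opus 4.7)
The plan is to assemble this theorem as an essentially immediate corollary of \prref{prSepCarFunct}, \prref{cosepcoalgTs}, \leref{6.1}, and \cite[Theorem 4.8]{bc4}, rather than grinding out any separate diagram computation. For $(i)\Leftrightarrow(ii)$, I would invoke \prref{prSepCarFunct}(1): since $\un{1}$ is a left $\ot$-generator, $F$ is separable if and only if there exists a Casimir morphism $B:\ XX\to A$ for $(X,\psi)$ in $\Tc_A^{\#}$ satisfying $m\circ AB\circ \delta=\epsilon$. This is exactly characterization (iii) of \prref{cosepcoalgTs}, which in turn is a reformulation of \prref{cosepcoalgmoncat}(iii) specialized to the monoidal category $\Tc_A^{\#}$. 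Hence $(i)\Leftrightarrow(ii)$.

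For $(ii)\Leftrightarrow(iii)$ I would use the additional hypotheses (coequalizers in $\Cc$ and left coflatness of $A$ and $X$) to conclude that $(X,\psi)\in {}^!\Tc_A^{\#}$, so \leref{6.1} applies: the functor $H:\ {}^!\Tc_A^{\#}\to \ACA$ is fully faithful and strong monoidal. By \equref{roots3}, such a functor transports coalgebra structures on $(X,\psi)$ bijectively to $A$-coring structures on $AX=H(X,\psi)$; being fully faithful and strong monoidal, it also transports morphisms $B:\ XX\to \un{1}$ (in $\Tc_A^{\#}$) bijectively to morphisms $AX\bullet AX\to A$ (in $\ACA$), and it preserves the conditions in \equref{cosepeq2} defining a normalized Casimir morphism. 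Invoking \prref{cosepcoalgmoncat}(iii) once on each side, $(X,\psi)$ is coseparable in $\Tc_A^{\#}$ if and only if $AX$ is coseparable in $\ACA$, which is precisely $(iii)$.

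For $(iii)\Leftrightarrow(iv)$, I would use the isomorphism of categories $\Cc(\psi)_A^X\cong \Cc^{AX}$ from \cite[Theorem 4.8]{bc4}, already used in \coref{6.2}. This isomorphism commutes with the forgetful functors down to $\Cc_A$, hence intertwines $F$ and $U$; separability is preserved under such an intertwining, so $F$ is separable if and only if $U$ is. Combined with $(i)\Leftrightarrow(ii)\Leftrightarrow(iii)$, this closes all four equivalences.

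The only real obstacle is the bookkeeping in $(ii)\Leftrightarrow(iii)$: one has to check that, under the identification $\varphi_0=A:\ A\to H(\un{1})$, the counit $\epsilon$ of $(X,\psi)$ corresponds under $H$ to the counit $\varepsilon_{AX}$ of the $A$-coring $AX$, and that the normalization condition $m\circ AB\circ \delta=\epsilon$ on a Casimir morphism $B$ translates, via the bijection $\Tc_A^{\#}(XX,\un{1})\cong \ACA(AX\bullet AX,A)$ induced by $H$, into the normalization condition $B'\circ \Delta_{AX}=\varepsilon_{AX}$ required in \equref{cosepeq2}. Both follow directly from the explicit formula $H(f)=mY\circ Af$ in the proof of \leref{6.1} and the description of comultiplication/counit on $AX$ obtained from \equref{roots3}, so no substantial new diagram chase is needed.
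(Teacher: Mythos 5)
Your proposal is correct and follows essentially the same route as the paper: (i)$\Leftrightarrow$(ii) via \prref{prSepCarFunct} and \prref{cosepcoalgTs}, and the equivalence with (iv) via the category isomorphism $\Cc^{AX}\cong \Cc(\psi)_A^X$ of \cite[Theorem 4.8]{bc4}. The only (harmless) variation is in (ii)$\Leftrightarrow$(iii): the paper argues directly with \leref{bim1a}, checking that $AX$-bicolinear retractions $\Omega$ of $\Delta_{AX}$ correspond to morphisms $\gamma$ satisfying \equref{cosepcoalgTsh}, whereas you transport normalized Casimir morphisms through the fully faithful strong monoidal functor $H$ of \leref{6.1} --- the same underlying Hom-set correspondence, so no substantive difference.
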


\begin{proof}
\un{$(i)\Leftrightarrow (ii)$.} Follows from \prref{prSepCarFunct} and the equivalence $(i)\Leftrightarrow (iii)$ in 
\prref{cosepcoalgTs}. \\
\un{$(ii)\Leftrightarrow (iii)$.} We proceed as in the proof of \thref{coFrobTvscoring}. As before we identify $AX\bullet AX=AXX$. Applying
\leref{bim1a}, we obtain an isomorphism $\alpha:\ {}_A\Cc(AXX,AX)\to \Cc(XX,AX)$. A direct verification shows
that $\Omega\in {}_A\Cc(AXX,AX)$ is right $A$-linear if and only if $\alpha(\Omega)=\gamma$ satisfies \equuref{cosepcoalgTsh}{a}.
In this situation, $\Omega$ is left and right $AX$-colinear if and only if $\gamma$ satisfies \equuref{cosepcoalgTsh}{b}.
Finally $\Delta\circ \Omega= AX$ if and only if \equuref{cosepcoalgTsh}{c} holds.\\
\un{$(i)\Leftrightarrow (iv)$.} The categories $\Cc^{AX}$ and $\Cc(\psi)_A^X$ are isomorphic, 
see \cite[Theorem 4.8]{bc4}, and this implies immediately that the separability of $F$ and $U$ is equivalent.
\end{proof}

More equivalent conditions for the coseparability of a coalgebra $(X,\psi)$  in ${\cal T}_A^\#$ can be given under
the assumption that $X\dashv Y$. For the definition of a separable algebra extension in a monoidal category,
we refer to \cite[Def. 4.5 (ii)]{dbbt2}.

\begin{proposition}\prlabel{SepFunctfinitcase}
Let $\Cc$ be a monoidal category with coequalizers, and assume that every object of $\Cc$ is flat.
Let $(A,X,\psi)$ be a cowreath in $\Cc$. If $X\dashv Y$ in $\Cc$ and every left $A$-module is robust, then the 
following assertions are equivalent:
\begin{itemize}
\item[(i)] $(X, \psi)$ is a coseparable coalgebra in ${\cal T}_A^\#$;
\item[(ii)] $(Y,\varphi)$ is a separable algebra in ${}_A^\#{\cal T}$, where 
$\ov{\psi}$ is defined in \equref{psidual};
\item[(iii)] The smash product $ YA$ is a separable 
algebra extension of $A$ in $\Cc$;
\item[(iv)] $YA$ is a separable $A$-ring, that is a separable
algebra in ${}_A\Cc_A$.  
\end{itemize}
If $\un{1}$ is a left $\ot$-generator in $\Cc$ then (i)-(iv) are also equivalent to 
\begin{itemize}
\item[(v)] The restriction of scalars functor $F': \Cc_{YA}\to \Cc_A$ 
is separable.
\end{itemize}
\end{proposition}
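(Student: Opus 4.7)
The plan is to mirror the proof of \thref{Frobcaractfinitecase}, substituting the Frobenius characterisations by their separability analogs. The key inputs are the strong monoidal fully faithful embedding $H:\Tc_A^\#\to\ACA$ of \leref{6.1}, the adjunction $AX\dashv YA$ in $\ACA$ provided by \prref{6.3}, the identification of the algebra structure on $YA$ with the wreath product in \prref{6.4}, and the general duality principle that under an adjunction $C\dashv A'$ between a coalgebra and an algebra in a monoidal category, $C$ is coseparable if and only if $A'$ is separable.

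First I would prove (i)$\Leftrightarrow$(iv). Since $H$ is strong monoidal and fully faithful, it preserves comultiplication, counit and bicolinear retractions, so $(X,\psi)$ is a coseparable coalgebra in $\Tc_A^\#$ if and only if $AX$ is a coseparable $A$-coring (this is also the content of the equivalence (ii)$\Leftrightarrow$(iii) in \thref{ForFunsepvsSepcoal}). Invoking now the adjunction $AX\dashv YA$ in $\ACA$, one transports a normalised Casimir morphism $B:AX\ota AX\to A$ (in the sense of \prref{cosepcoalgmoncat}) to a separability element $e:A\to YA\ota YA$ satisfying $m_{YA}\circ e=\eta_{YA}$ together with the bimodule balancing condition, and vice versa. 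Using the explicit mate formulas, together with the description of $m_{YA}$ via the smash product in \prref{6.4} and the formulas of \prref{duality(co)wreaths}, this correspondence is bijective and yields (iv).

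For (iv)$\Leftrightarrow$(iii)$\Leftrightarrow$(ii), I would apply \cite[Corollary 8.8]{dbbt2} to the right wreath $(A,Y,\varphi)$ viewed in $\ov{\Cc}$. This produces at once the equivalence between separability of the smash algebra extension $A\to YA$ in $\Cc$, separability of $YA$ as an algebra in ${}_A\Cc_A$, and separability of $(Y,\varphi)$ as an algebra in ${}_A^\#\Tc$, exactly as in the corresponding step of \thref{Frobcaractfinitecase}. Finally, (i)$\Leftrightarrow$(v) follows by combining \thref{gntasmod} with \cite[Theorem 4.8]{bc4}: the categories $\Cc(\psi)_A^X$, $\Cc^{AX}$ and $\Cc_{YA}$ are pairwise isomorphic, and under these isomorphisms the forgetful functors $F:\Cc(\psi)_A^X\to\Cc_A$, $U:\Cc^{AX}\to\Cc_A$ and the restriction of scalars $F':\Cc_{YA}\to\Cc_A$ are identified. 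Since $\un{1}$ is a left $\ot$-generator, \thref{ForFunsepvsSepcoal} then gives separability of $F'$ if and only if $(X,\psi)$ is coseparable in $\Tc_A^\#$.

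The principal obstacle I anticipate is in the duality step of (i)$\Leftrightarrow$(iv): one must verify in detail that transporting a normalised Casimir morphism for the coalgebra $AX$ along the adjunction $AX\dashv YA$ produces data satisfying the separability axioms for the algebra $YA$ in $\ACA$, and conversely. This requires combining the mate correspondence with the explicit formulas of \prref{duality(co)wreaths} and \prref{6.4}, and is where the assumptions that every object of $\Cc$ is left coflat and every left $A$-module is robust enter essentially, so that the relative tensor product $AX\ota YA$ appearing in the separability diagrams can be manipulated concretely inside $\Cc$ via \prref{bim4} and \prref{bim7}.
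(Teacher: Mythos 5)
Your proposal is correct and follows essentially the same route as the paper, whose proof is a one-line appeal to \thref{gntasmod}, \thref{ForFunsepvsSepcoal} and the separability corollary of \cite{dbbt2} applied to the wreath $(Y,A,\varphi)$; your extra detail on transporting a normalised Casimir morphism along the adjunction $AX\dashv YA$ in $\ACA$ just fills in what the paper leaves implicit. One small correction: the external result to invoke is \cite[Cor.\ 8.9]{dbbt2} (the separable analogue), not Corollary 8.8, which is the Frobenius statement used in \thref{Frobcaractfinitecase}.
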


\begin{proof}
This is an immediate consequence of Theorems \ref{th:gntasmod} and \ref{th:ForFunsepvsSepcoal}, and
\cite[Cor. 8.9]{dbbt2} applied to the wreath $(Y, A, \varphi)$.
\end{proof}

Our final result is a Maschke type Theorem for entwined modules. It generalizes \cite[Theorem 2.7]{cimz} and \cite[Theorem 4.2]{brFM}.

\begin{theorem}\thlabel{MaschkeGenEntwinMod}
Let $(X, \psi)$ be a coseparable 
coalgebra in ${\cal T}_A^\#$.
\begin{itemize}
\item[(i)] If a morphism in $\Cc(\psi)_A^X$ has a section (resp. a retraction) in $\Cc_A$ then it has 
a section (resp. a retraction) in $\Cc(\psi)_A^X$;
\item[(ii)] If an object in $\Cc(\psi)_A^X$ is semisimple (resp. projective, injective) 
as a right $A$-module then it is semisimple (resp. projective, injective) as an entwined module 
over $(A, X, \psi)$. 
\item[(iii)] Every $M\in \Cc(\psi)_A^X$ is relative injective (see \deref{7.3} for the definition of relative injectivity). 
\end{itemize}
\end{theorem}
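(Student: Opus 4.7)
The plan is to extract from coseparability a section $\vartheta$ of the unit $\rho$ of the adjunction $F\dashv G$ of \leref{rightadj}, and then read off (i)--(iii) as standard consequences of such a splitting.

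By the equivalence $(i)\Leftrightarrow(iii)$ of \prref{cosepcoalgTs}, coseparability of $(X,\psi)$ furnishes a Casimir morphism $B:\ XX\to A$ in $\Tc_A^\#$ (i.e. $B\in \Tc_A^\#(XX,\un{1})$ satisfying (\ref{eq:coFrobCoTsh}.c)) together with the normalization $m\circ AB\circ \delta=\epsilon$. I would then define
$$\vartheta_M=\mu\circ MB\circ \rho X:\ MX\to M,\quad M\in \Cc(\psi)_A^X,$$
and check, by the same diagrammatic computations that underlie the Casimir-side of \prref{CasMor}, that each $\vartheta_M$ is a morphism in $\Cc(\psi)_A^X$ and that the family is natural in $M$, i.e. defines a natural transformation $\vartheta:\ GF\to \Id_{\Cc(\psi)_A^X}$. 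The normalization condition, combined with coassociativity \equref{c1} and the counit axiom \equref{c2}, then yields
$$\vartheta_M\circ \rho=\mu\circ MB\circ \mu XX\circ M\delta\circ \rho=\mu\circ M(m\circ AB\circ\delta)\circ \rho=\mu\circ M\epsilon\circ \rho=M,$$
so that $\vartheta$ splits $\rho$ pointwise.

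For (i), given $f:\ M\to N$ in $\Cc(\psi)_A^X$ admitting a section $s$ in $\Cc_A$, the composite $\tilde s=\vartheta_M\circ G(s)\circ \rho_N$ lies in $\Cc(\psi)_A^X(N,M)$ as a composite of morphisms there, and naturality of $\vartheta$ together with $fs=N$ gives $f\tilde s=\vartheta_N\circ G(fs)\circ \rho_N=\vartheta_N\circ \rho_N=N$. The retraction case is symmetric: if $r:\ N\to M$ retracts $f:\ M\to N$, then $\tilde r=\vartheta_M\circ G(r)\circ \rho_N$ satisfies $\tilde r\circ f=\vartheta_M\circ G(r)\circ \rho_N\circ f=\vartheta_M\circ G(rf)\circ \rho_M=\vartheta_M\circ \rho_M=M$, where the middle equality uses $X$-colinearity of $f$ to rewrite $\rho_N\circ f=G(f)\circ \rho_M$. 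Part (ii) is then a direct transfer via (i): applying the forgetful functor converts an entwined mono (resp. epi) involving $M$ into an $A$-mono (resp. $A$-epi), and the assumed $A$-semisimplicity, $A$-projectivity, or $A$-injectivity of $M$ delivers a splitting in $\Cc_A$, which (i) promotes to a splitting in $\Cc(\psi)_A^X$. Part (iii) follows at once from (i) applied to the coaction $\rho_M:\ M\to MX$ itself, which is a morphism in $\Cc(\psi)_A^X$ by \equref{c1} and admits the right $A$-linear retraction $\mu\circ M\epsilon$ by \equref{c2}; the entwined retraction it then possesses (in fact $\vartheta_M$ itself does the job) is exactly the witness of relative injectivity.

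The main obstacle is the verification, by diagrammatic computation in the strict monoidal setting, that each $\vartheta_M$ is actually a morphism in $\Cc(\psi)_A^X$: right $A$-linearity rests on the $A$-linearity \equref{c3} of the coaction together with $B$ being a morphism in $\Tc_A^\#$ (the content of (\ref{eq:coFrobCoTsh}.b)), while $X$-colinearity requires a genuine use of the Casimir identity (\ref{eq:coFrobCoTsh}.c) combined with the coassociativity of $\delta$ and the compatibility of $\psi$ with $\delta$ expressed in \equref{pdf}. Once those diagrams commute, the pointwise splitting $\vartheta\circ \rho=\Id$ and all the lifting arguments for (i)--(iii) reduce to short sequences of naturality and counit manipulations.
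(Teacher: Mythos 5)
Your proposal is correct and takes essentially the same route as the paper: coseparability of $(X,\psi)$ yields a normalized Casimir morphism, hence (by the construction underlying \prref{CasMor} and \prref{prSepCarFunct}) a natural retraction $\vartheta$ of the unit of the adjunction $F\dashv G$, so $F$ is separable and the Maschke-type statements (i)--(iii) follow. The only difference is presentational: the paper disposes of (i)--(iii) by citing \cite[Prop. 47 and 48, Cor. 7]{cmz}, whereas you verify these standard consequences of separability directly.
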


\begin{proof}
The forgetful functor $F:\  \Cc(\psi)_A^X\ra \Cc_A$ is separable since $(X, \psi)$ 
is a coseparable coalgebra in ${\cal T}_A^\#$. The three assertions then follow immediately from
\cite[Prop. 47 and 48, Cor.7]{cmz}. 
\end{proof}


\end{document}